\newtheorem{thm}{Theorem}[section]
\newtheorem{prop}[thm]{Proposition}
\newtheorem{coro}[thm]{Corollary}
\newtheorem{lemma}[thm]{Lemma}
\newtheorem{rem}[thm]{Remark}
\newcommand{\R}{\mathbb{R}}             
\newcommand{\N}{\mathbb{N}}             
\newcommand{\C}{\mathbb{C}}             
\newcommand{\e}{\epsilon}
\newcommand{\K}{\kappa}
\newcommand{\half}{\frac{1}{2}}
\renewcommand{\o}{\omega}
\newcommand{\ds}{\displaystyle}
\newcommand{\Section}[1]{\section{#1} \setcounter{equation}{0}}
\begin{document}

\title{Stability in the inverse Steklov problem on warped product Riemannian manifolds}
\author{Thierry Daud\'e \footnote{Research supported by the French National Research Projects AARG, No. ANR-12-BS01-012-01, and Iproblems, No. ANR-13-JS01-0006} $^{\,1}$, Niky Kamran \footnote{Research supported by NSERC grant RGPIN 105490-2011} $^{\,2}$ and François Nicoleau \footnote{Research supported by the French GDR Dynqua} $^{\,3}$\\[12pt]
 $^1$  \small D\'epartement de Math\'ematiques. UMR CNRS 8088, Universit\'e de Cergy-Pontoise, \\
 \small 95302 Cergy-Pontoise, France. \\
\small Email: thierry.daude@u-cergy.fr \\
$^2$ \small Department of Mathematics and Statistics, McGill University,\\ \small  Montreal, QC, H3A 2K6, Canada. \\
\small Email: nkamran@math.mcgill.ca \\
$^3$  \small  Laboratoire de Math\'ematiques Jean Leray, UMR CNRS 6629, \\ \small 2 Rue de la Houssini\`ere BP 92208, F-44322 Nantes Cedex 03. \\
\small Email: francois.nicoleau@math.univ-nantes.fr }



\date{\today}


\maketitle


\begin{abstract}

In this paper, we study the amount of information contained in the Steklov spectrum of some compact manifolds with connected boundary
equipped with a warped product metric. Examples of such manifolds can be thought of as deformed balls in $\R^d$. We first prove that the Steklov spectrum determines uniquely
the warping function of the metric. We show in fact that the approximate knowledge (in a given precise sense) of the Steklov spectrum is enough to determine uniquely the warping function in a neighbourhood of the boundary. Second, we provide stability estimates of log-type on the warping function from the Steklov spectrum.
The key element of these stability results relies on a formula that, roughly speaking, connects the inverse data (the Steklov spectrum) to the \emph{Laplace transform} of the difference of the two warping factors.


\vspace{0.5cm}

\noindent \textit{Keywords}. Inverse Calder\'on problem, Steklov spectrum, moment problems, Weyl-Titchmarsh functions, local Borg-Marchenko theorem.


\noindent \textit{2010 Mathematics Subject Classification}. Primaries 81U40, 35P25; Secondary 58J50.

\end{abstract}

\tableofcontents


\Section{The model and statement of the results} \label{1}

In this paper, we shall consider a class of $d$-dimensional manifolds
\begin{equation} \label{Manifold}
  M = (0,1] \times S^{d-1},
\end{equation} 	
where $d \geq 3$  and with a connected boundary consisting  in a copy of $S^{d-1}$, \textit{i.e.} $\partial M = \{1\} \times S^{d-1}$. We shall assume these manifolds are equipped with
Riemannian warped product metrics of the form
\begin{equation} \label{Metric}
  g = c^4(r) [dr^2 + r^2 g_{S}].
\end{equation}
In what follows, $g_{S}$ denotes a {\it{fixed}} smooth Riemannian metric on the sphere $S^{d-1}$ and $c$ is a positive $C^{m}$-function, (with $m\geq 2$), of the variable $r$ only.
Without loss of generality, we may assume $c(0)=1$.

\vspace{0.2cm}


We emphasize
that under these general assumptions, the metrics (\ref{Metric}) are not necessarily regular. Actually, one proves that the metrics $g$ are regular if and only if the odd-order derivatives
$c^{(2k+1)}(0)=0$ and $g_{S}= d\Omega^2$ where $d\Omega^2$ is the round metric on $S^{n-1}$, (see \cite{Pe}, Section 4.3.4).
Nevertheless, in this paper, we consider general metrics of the form (\ref{Metric}) which could be singular at $r=0$, but emphasizing that in the regular case, the proofs of our results are much simpler.


\vspace{0.1cm}
It is convenient to use the change of coordinates $ x = -\log r \in [0, +\infty[$. In these new coordinates, the metric $g$ has the form
\begin{equation} \label{Metricx}
  g = f^4(x) [dx^2 +  g_{S}],
\end{equation}
where $f(x) = c(e^{-x}) e^{-\frac{x}{2}}$. Using the Taylor expansion of $c(r)$ at $r=0$, we see that the conformal factor $f(x)$ satisfies the following asymptotic expansion:
\begin{equation} \label{f}
  f(x) =  e^{-\frac{x}{2}} + \sum_{k=1}^m c_k e^{-(k+\half)x} + o( e^{ -(m+\frac{1}{2})x})  \ , \ x \to +\infty \  ,
\end{equation}
for some suitable real constants $c_k$.

\vspace{0.1cm}
A typical example of Riemannian manifold belonging to our class is the usual Euclidean metric on the unit ball of $\R^d$, obtained by simply taking $c(r)=1$, (i.e $f(x) = e^{-\frac{x}{2}}$),
and $g_S = d\Omega^2$. Hence, under our hypothesis, the Riemannian manifold $(M,g)$ can be viewed topologically as a unit ball of $\R^n$ whose metric is a deformation of the usual Euclidean metric both in the radial direction $r$ through the warping function $c(r)$, and also in the transversal directions through the general metric $g_S$.

\vspace{0.1cm}
In this paper, we are interested in studying the amount of information contained in the Steklov spectrum associated to the class of Riemannian manifolds $(M,g)$. Recall \cite{GP} that the Steklov
spectrum is defined as the spectrum of the Dirichlet-to-Neumann map $\Lambda_g$ (abbreviated later by DN map) associated to $(M,g)$. More precisely, consider the Dirichlet problem
\begin{equation} \label{Dirichlet}
  \left\{ \begin{array}{cc}
	-\triangle_g u = 0, & \textrm{on} \ M, \\
	u = \psi, & \textrm{on} \ \partial M,
	\end{array} \right.
\end{equation}
where $\psi \in H^{\half}(\partial M)$. Using the separation of variables, we shall prove in Section 2 that we can solve uniquely (\ref{Dirichlet}) even in the case where the metric $g$ is singular. Of course, when the metric is smooth, this result is well-known (see \cite{Sa, Ta1}) and the unique solution $u$ of (\ref{Dirichlet}) belongs to the Sobolev space $H^1(M)$.

\vspace{0.2cm}
In the latter case, we define the DN map $\Lambda_g$ as the operator $\Lambda_{g}$ from $H^{1/2}(\partial M)$ to $H^{-1/2}(\partial M)$ as
\begin{equation} \label{DN}
  \Lambda_{g} \psi = \left( \partial_\nu u \right)_{|\partial M},
\end{equation}
where $u$ is the unique solution of (\ref{Dirichlet}) and $\left( \partial_\nu u \right)_{|\partial M}$ is the normal derivative of $u$ with respect to the outer unit normal vector
$\nu$ on $\partial M$. Here $\left( \partial_\nu u \right)_{|\partial M}$ is interpreted in the weak sense as an element of $H^{-1/2}(\partial M)$ by
$$
  \left\langle \Lambda_{g} \psi | \phi \right \rangle = \int_M \langle du, dv \rangle_g \, dVol_g,
$$
for any $\psi \in H^{1/2}(\partial M)$ and $\phi \in H^{1/2}(\partial M)$ such that $u$ is the unique solution of (\ref{Dirichlet}) and $v$ is any element of $H^1(M)$ such
that $v_{|\partial M} = \phi$. If $\psi$ is sufficiently smooth, we can check that
$$
  \Lambda_{g} \psi = g(\nu, \nabla u)_{|\partial M} = du(\nu)_{|\partial M} = \nu(u)_{|\partial M},
$$
so that the expression in local coordinates for the normal derivative is thus given by
\begin{equation} \label{DN-Coord}
\partial_\nu u = \nu^i \partial_i u.
\end{equation}

The DN map is a pseudo-differential operator of order $1$ and is self-adjont on $L^2(\partial{M}, dS_g)$ where $dS_g$ denotes the metric induced by $g$ on the boundary $\partial M$.
Therefore, the DN map has a real and discrete spectrum accumulating at infinity. We shall thus denote the Steklov eigenvalues (counted with multiplicity) by
\begin{equation} \label{Steklov}
  0 = \sigma_0 < \sigma_1 \leq \sigma_2 \leq \dots \leq \sigma_k \to \infty.
\end{equation}
We refer the reader to the nice survey \cite{GP} and references therein. The main results of this paper are the following. First, we obtain the complete asymptotics of the Steklov spectrum $\sigma_k$ as $k \to \infty$ in terms of the Taylor series of the effective potential
\begin{equation} \label{qf}
  q_f(x) = \frac{(f^{d-2})''(x)}{f^{d-2}(x)} - \frac{(d-2)^2}{4},
\end{equation}
and of the eigenvalues of the positive Laplace-Beltrami operator of $(S^{d-1},g_S)$. Note that, in the radial coordinate $r$, the effective potential is given by:
\begin{equation}\label{potradial}
q_f(x) = (d-2) \left( (d-3) r^2 \left( \frac{c'(r)}{c(r)} \right)^2 + r^2 \frac{c''(r)}{c(r)} + (d-1) r \frac{c'(r)}{c(r)} \right).
\end{equation}
For later use, we denote the spectrum of $-\triangle_{g_S}$ (counting multiplicities) by
\begin{equation} \label{muk}
  0 = \mu_0 < \mu_1 \leq \mu_2 \leq \dots \leq \mu_k \to \infty.
\end{equation}
For each $k \geq 0$, the eigenvalue $\mu_k$ is associated to the normalized eigenfunction $Y_k \in L^2(S^{d-1}, dVol_{g_S})$ such that
$$
  -\triangle_{g_K} Y_k = \mu_k Y_k.
$$

\noindent
Let us finally introduce
\begin{equation} \label{kappak}
  \kappa_k = \sqrt{\mu_k + \frac{(d-2)^2}{4}}, \quad \forall k \geq 0,
\end{equation}
and recall the Weyl asymptotics (\cite{SV}, Theorem 1.2.1 and Remark 1.2.2)
\begin{equation} \label{WeylLaw}
  \kappa_k = c_{d-1}\, k^{\frac{1}{d-1}} + O(1), \quad k \to \infty.
\end{equation}
Here $c_{d-1} = 2\pi \ (\o_{d-1} Vol(S^{d-1}))^{-\frac{1}{d-1}}$ where $\o_{d-1}$ is the volume of the unit ball in $\R^{d-1}$, while $Vol(S^{d-1})$ is
the volume of $S^{d-1}$ for the metric $g_S$.


\vspace{0.1cm}\noindent
Our first result is the following:

\begin{thm} \label{AsympSteklov}
Let $(M,g)$ be a Riemannian manifold given by (\ref{Manifold})-(\ref{Metric}) and assume that $c \in C^{\infty}([0,1])$. Then the Steklov spectrum $(\sigma_k)_{k \geq 0}$ satisfies for all $N \in \N$,
\begin{equation} \label{AS1}
\sigma_k = \frac{(d-2) f'(0)}{f^3(0)} + \frac{\kappa_k}{f^2(0)} + \sum_{j=0}^N \frac{\beta_j(0)}{f^2(0)} \kappa_k^{-j-1} + O(\kappa_k^{-N-2}), \quad k \to \infty,
\end{equation}
where
\begin{equation} \label{AS2}
\left\{ \begin{array}{l}
\beta_0(x) = \half q_f(x), \\
{\displaystyle{\beta_{j+1}(x) = \half \beta_j'(x) + \half \sum_{l=0}^j \beta_l(x) \beta_{j-l}(x).}}
\end{array} \right.
\end{equation}
\end{thm}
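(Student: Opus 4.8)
\medskip
\noindent\textbf{Proof strategy.} The plan is to diagonalise the DN map by separation of variables, to reduce each Fourier mode to a one--dimensional Schr\"odinger equation, and then to read off $\sigma_k$ from the high--energy asymptotics of the associated Weyl--Titchmarsh function.

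First I would expand a boundary datum as $\psi=\sum_{k\ge 0}\psi_k Y_k$ and seek the harmonic extension in the separated form $u=\sum_{k\ge 0}v_k(x)\,Y_k$. Computing $\triangle_g$ for the metric (\ref{Metricx}) (as established in Section 2) shows that $-\triangle_g u=0$ decouples into the family of radial equations
\[
 v_k''+2(d-2)\frac{f'}{f}\,v_k'-\mu_k v_k=0,\qquad x\in[0,+\infty[.
\]
The Liouville substitution $v_k=f^{-(d-2)}w_k$ removes the first order term and, by the very definition (\ref{qf}) of $q_f$, converts this into the Schr\"odinger equation $-w_k''+q_f\,w_k=-\kappa_k^2 w_k$, with $\kappa_k$ as in (\ref{kappak}). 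Since the $Y_k$ diagonalise the problem simultaneously, the Steklov eigenvalue attached to $Y_k$ is obtained from the boundary logarithmic derivative: writing the outward unit normal as $\nu=-f^{-2}(0)\,\partial_x$ and using $v_k=f^{-(d-2)}w_k$, one finds
\[
 \sigma_k=\frac{(d-2)f'(0)}{f^3(0)}+\frac{1}{f^2(0)}\,M(\kappa_k),\qquad M(\kappa_k):=-\frac{w_k'(0)}{w_k(0)},
\]
where $w_k$ is the solution that is regular at the (possibly singular) centre $x=+\infty$. This already yields the first two terms of (\ref{AS1}), so everything reduces to the large $\kappa$ behaviour of $M(\kappa)$. (For $k$ large $\sigma_k$ is increasing in $\kappa_k$, so the labelling is consistent with the orderings (\ref{Steklov}) and (\ref{muk}).)

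The function $M$ is the Weyl--Titchmarsh function of $-\frac{d^2}{dx^2}+q_f$ at the negative energy $-\kappa^2$, and its logarithmic derivative in $x$ satisfies the Riccati equation $M'=M^2-\kappa^2-q_f$. I would insert the ansatz $-w'(x)/w(x)=\kappa+\sum_{j\ge 0}\beta_j(x)\kappa^{-j-1}$ and match equal powers of $\kappa$: the leading balance forces $\beta_0=\half q_f$, and the successive balances express each $\beta_{j+1}$ through $\beta_j'$ and the convolution $\sum_{l=0}^{j}\beta_l\beta_{j-l}$, which is precisely the recursion (\ref{AS2}). The essential structural point is that this computation is \emph{local}: the coefficients $\beta_j(0)$ are universal differential polynomials in $q_f$ evaluated at the boundary, hence depend only on the jet of $f$ at $x=0$. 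Since $\kappa_k\to\infty$ by the Weyl law (\ref{WeylLaw}), the resulting expansion in $\kappa_k$ is exactly (\ref{AS1}).

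The delicate step, and the one I expect to be the main obstacle, is to upgrade this formal expansion to a genuine asymptotic one with controlled remainder $M(\kappa)=\kappa+\sum_{j=0}^{N}\beta_j(0)\kappa^{-j-1}+O(\kappa^{-N-2})$, and to guarantee that the singular endpoint $x=+\infty$ does not affect the coefficients. I would construct the regular solution on a fixed collar $[0,\delta]$ of the boundary by a Liouville--Green scheme, writing $w=\exp\!\big(-\int_0^x S(t,\kappa)\,dt\big)$ with $S=\kappa+\sum_{j\ge 0}s_j\,\kappa^{-j}$, and solving the attached Volterra integral equation to bound the $N$-th remainder of $S(0,\kappa)=M(\kappa)$ by $O(\kappa^{-N-2})$ uniformly. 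Locality then comes from the separation of the two endpoints: since $q_f\to 0$ and the operator is in the limit point case at $+\infty$, the regular solution is selected unambiguously, and it differs from the boundary WKB approximant built from the jet of $q_f$ at $0$ only by a contribution that is exponentially small in $\kappa$, hence negligible against every power $\kappa^{-N}$. This is in essence a local Borg--Marchenko statement for the high--energy Weyl function; once it is established, substitution into the formula for $\sigma_k$ yields (\ref{AS1})--(\ref{AS2}).
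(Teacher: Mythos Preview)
Your proposal is correct and follows the same route as the paper: separation of variables reduces the problem to the half-line Schr\"odinger operator $-\frac{d^2}{dx^2}+q_f$, the Steklov eigenvalues are identified with $\frac{(d-2)f'(0)}{f^3(0)}-\frac{M(-\kappa_k^2)}{f^2(0)}$ (the paper's Lemma~\ref{SteklovSpectrumWT}), and the expansion (\ref{AS1}) then follows from the large-$\kappa$ asymptotics of the Weyl--Titchmarsh function. The only difference is that the paper simply quotes this last asymptotic from Simon \cite[Thm~4.5]{Si1} (going back to Danielyan--Levitan \cite{DL}), whereas you sketch its proof via the Riccati equation and a WKB/Volterra remainder estimate; your sketch is exactly how that cited result is established, so the arguments coincide.
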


\noindent We make the following observations: \\

%
%
%
%

\noindent
\textbf{1}. Using to (\ref{WeylLaw}), we find that the Steklov spectrum satisfies the usual Weyl law (see \cite{GP}):
\begin{equation}\label{WeylSteklov}
\sigma_k = \frac{c_{d-1}}{f^2(0)}\, k^{\frac{1}{d-1}} + O(1)= 2\pi \left( \frac{k}{\o_{d-1} Vol(\partial M)} \right)^{\frac{1}{d-1}}  + O(1), \quad k \to \infty.
\end{equation}
\textbf{2}. Note that the coefficients $\beta_j(0), \ j \geq 0$ only depend on the derivatives $q_f^{(l)}(0), \ l = 0, \dots, j$ up to order $j$. Hence the values $f(0), f'(0)$ and
the Taylor series at $0$ of the effective potential $q_f$ give the leading terms of the asymptotics of the Steklov spectrum in inverse powers of $\kappa_k$. But without additional knowledge of the asymptotics of the $\kappa_k$, we are not able to determine the warping function $f$ from this information.

\noindent \textbf{3}. In the exceptional case where $(S^{d-1},g_S) = (S^{d-1}, d\Omega^2)$, the spectrum of $-\triangle_{g_K}$ is given by $\{ \lambda_k = k(k+d-2), \ k \geq 0\}$ where each eigenvalue $\lambda_k$
has multiplicity $\frac{2k+d-2}{d-2} {k \choose k+d-3}$. Hence, if we order the Steklov spectrum $\sigma_k$ {\it{without counting mutiplicities}}, \textit{i.e.}
$$
  0 = \sigma_0 < \sigma_1 < \sigma_2 < \dots < \sigma_k \to \infty,
$$
where each $\sigma_k$ has multiplicity $\frac{2k+d-2}{d-2} {k \choose k+d-3}$ and denoting $\kappa_k = \sqrt{\lambda_k + \frac{(d-2)^2}{4}} = k+\frac{d-2}{2}$, we get the more precise asymptotics (without multiplicity):
\begin{equation} \label{AS3}
\sigma_k = \frac{(d-2) f'(0)}{f^3(0)} + \frac{k+\frac{d-2}{2}}{f^2(0)} + \sum_{j=0}^N \frac{\beta_j(0)}{f^2(0)} (k+\frac{d-2}{2})^{-j-1} + O(k^{-N-2}),
\quad k \to \infty .
\end{equation}
As a consequence, the asymptotics of the Steklov spectrum allow to recover inductively the values $f(0), f'(0)$ and the Taylor series at $0$ of the effective potential $q_f$.
Using then (\ref{qf}), we immediately see that in fact the Steklov spectrum determines the Taylor series of $f$. Hence we have proved:

\begin{coro} \label{UniquenessAnalytic}
  If $(S^{d-1},g_S) = (S^{d-1}, d\Omega^2)$ and if the warping function $c$ is analytic on $[0,1]$, then the Steklov spectrum of $(M,g)$ determines uniquely the warping function $c$.
\end{coro}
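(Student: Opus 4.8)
The plan is to run Theorem~\ref{AsympSteklov} in reverse, exploiting that the round hypothesis makes the transversal data fully explicit, and then to let analyticity carry the boundary information into the interior. Since $(S^{d-1},g_S)=(S^{d-1},d\Omega^2)$, the eigenvalues of $-\triangle_{g_S}$ are $\lambda_k=k(k+d-2)$, so the numbers $\kappa_k=k+\frac{d-2}{2}$ entering (\ref{kappak}) are known \emph{exactly}, without the $O(1)$ Weyl remainder of (\ref{WeylLaw}) that obstructs the general case in observation~\textbf{2}. First I would recall that, by the separation of variables behind the computation of the Steklov spectrum, each eigenspace of $-\triangle_{g_S}$ of eigenvalue $\lambda_k$ produces an eigenspace of $\Lambda_g$ of the same dimension whose Steklov eigenvalue depends only on $\kappa_k$. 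As the leading behaviour $\kappa_k/f^2(0)$ with $f(0)>0$ is strictly increasing in $\kappa_k$, for large $k$ the \emph{distinct} Steklov values, listed increasingly, are in bijection with the $\kappa_k$, with matching multiplicities; this identifies the raw spectrum with the labelled expansion (\ref{AS3}).

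Given (\ref{AS3}) with the $\kappa_k$ known, I would read off the coefficients by iterated limits: $\tfrac{1}{f^2(0)}=\lim_k(\sigma_{k+1}-\sigma_k)$ fixes $f(0)>0$, then $\tfrac{(d-2)f'(0)}{f^3(0)}=\lim_k\big(\sigma_k-\kappa_k/f^2(0)\big)$ fixes $f'(0)$, and subtracting the terms already found and multiplying by $\kappa_k^{\,j+1}$ before taking the limit yields every $\tfrac{\beta_j(0)}{f^2(0)}$, hence every $\beta_j(0)$. The point is then that the recursion (\ref{AS2}) is triangular in the Taylor coefficients of $q_f$ at $0$: an easy induction shows $\beta_j(x)$ is a differential polynomial in $q_f$ whose unique top-order term is $2^{-(j+1)}q_f^{(j)}(x)$, every other term involving only derivatives of order $\le j-1$, so that $\beta_j(0)=2^{-(j+1)}q_f^{(j)}(0)+P_j\big(q_f(0),\dots,q_f^{(j-1)}(0)\big)$ for explicit polynomials $P_j$. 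This linear system with non-zero diagonal inverts, recovering the \emph{entire} Taylor series of $q_f$ at $x=0$ along with $f(0)$ and $f'(0)$.

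To return from $q_f$ to $f$, I would set $h=f^{d-2}$, so that by (\ref{qf}) $h$ solves the linear ODE $h''=\big(q_f+\tfrac{(d-2)^2}{4}\big)h$ with Cauchy data $h(0)=f(0)^{d-2}$ and $h'(0)=(d-2)f(0)^{d-3}f'(0)$, all now known; the ODE recursion then produces the full Taylor series of $h$, and hence of $f=h^{1/(d-2)}$ (using $f>0$), at $x=0$. Finally, because $c$ is analytic on $[0,1]$ and $x\mapsto e^{-x}$ is analytic, $f(x)=c(e^{-x})e^{-x/2}$ is real-analytic on the connected half-line $[0,+\infty)$; its Taylor series at $x=0$ therefore determines $f$ on all of $[0,+\infty)$, and inverting $x=-\log r$ gives $c(r)=r^{-1/2}f(-\log r)$ on $(0,1]$, hence $c$ on $[0,1]$ with $c(0)=1$.

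The substantive analytic input is Theorem~\ref{AsympSteklov} itself; granting it, the two delicate points of the corollary are, first, the clean identification of the abstract eigenvalue list with (\ref{AS3}), which hinges precisely on the \emph{exact} knowledge of the $\kappa_k$ peculiar to the round metric (and on matching orderings and multiplicities), and, second, the passage from the boundary germ to a global function. I expect the coefficient extraction and the triangular inversion to be routine bookkeeping, while the genuine conceptual crux---and the sole reason the analyticity hypothesis cannot be removed---is this last analytic-continuation step, since the Steklov data by themselves only pin down the Taylor jet of $f$ at $x=0$.
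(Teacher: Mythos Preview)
Your proposal is correct and follows essentially the same route as the paper: exploit the explicit $\kappa_k=k+\tfrac{d-2}{2}$ for the round sphere to read off $f(0)$, $f'(0)$ and the $\beta_j(0)$ from the asymptotics (\ref{AS3}), invert the triangular recursion (\ref{AS2}) to get the Taylor jet of $q_f$ at $0$, feed this into the linear ODE (\ref{qf}) for $f^{d-2}$ to recover the Taylor jet of $f$, and conclude by analyticity. The paper's own argument is the terse version of exactly this, given in observation~\textbf{3} just before the corollary. One small remark: your last sentence ``its Taylor series at $x=0$ therefore determines $f$ on all of $[0,+\infty)$'' is correct, but the mechanism is the identity theorem for real-analytic functions on a connected set, not global convergence of the single power series at $0$; it would be worth saying so explicitly.
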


This leads to the question: does the Steklov spectrum determine uniquely the warping function $c$ without any assumption of analyticity on $c$ and on the transversal metric $g_S$ ? The answer is yes. Precisely, we have:

\begin{thm} \label{UniquenessSteklov}
  Let $(M,g)$ be a Riemannian manifolds given by (\ref{Manifold})-(\ref{Metric}). Then the Steklov spectrum $(\sigma_k)_{k \geq 0}$
  determines uniquely the warping function $c$.
\end{thm}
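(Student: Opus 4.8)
The plan is to reduce the multidimensional inverse Steklov problem to a one-dimensional inverse spectral problem for a Schrödinger operator on the half-line, and then to exploit the link between the Steklov data and a Weyl--Titchmarsh function, finished off by a completeness argument. First I would separate variables: since $g = f^4(x)[dx^2 + g_S]$ with $g_S$ fixed, the DN map $\Lambda_g$ is diagonalized by the eigenbasis $(Y_k)$ of $-\triangle_{g_S}$, and on the mode $Y_k$ the harmonic extension reduces to a radial ODE. The Liouville substitution $\psi = f^{d-2}v$ turns it into
$$-\psi'' + q_f(x)\,\psi = -\kappa_k^2\,\psi, \qquad x \in [0,+\infty),$$
with $q_f$ as in (\ref{qf}) and $\kappa_k$ as in (\ref{kappak}). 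Computing the normal derivative at $x=0$ then shows that the Steklov eigenvalue carried by $Y_k$ is
$$\sigma_k = (d-2)\,\frac{f'(0)}{f^3(0)} - \frac{1}{f^2(0)}\,M(-\kappa_k^2),$$
where $M$ is the Weyl--Titchmarsh function of $-\partial_x^2 + q_f$ built from the solution lying in $\L$ near $+\infty$; the high-energy expansion of $M$ reproduces exactly the coefficients $\beta_j(0)$ of (\ref{AS2}). Since the metric may be singular at $r=0$, one must check — as is done in Section 2 — that this decaying solution exists, is unique and does not vanish at $x=0$, so that $M(-\kappa_k^2)$ is well defined.

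Next I would extract the one-dimensional inverse data. As $g_S$ is fixed, $(\kappa_k)$ is known; since $M$ is a Herglotz function, $\kappa \mapsto \sigma(\kappa)$ is strictly increasing, so the Steklov spectrum (\ref{Steklov}), ordered with multiplicities, is in order-preserving bijection with $(\kappa_k)$, which recovers the pairing $\kappa_k \leftrightarrow \sigma_k$ index by index. Reading the two leading terms of $\sigma_k$ as $k \to \infty$ — which require only the Weyl law (\ref{WeylLaw}) and not the full $C^\infty$ expansion — yields $f(0)$ and $f'(0)$. Hence two warping functions $f_1, f_2$ with the same Steklov spectrum satisfy $f_1(0)=f_2(0)$, $f_1'(0)=f_2'(0)$, and $M_1(-\kappa_k^2) = M_2(-\kappa_k^2)$ for every $k \geq 0$.

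The heart of the argument, and the expected main obstacle, is to pass from this countable family of coincidences to $q_{f_1}=q_{f_2}$. Here I would use the $A$-function representation of Simon/Gesztesy--Simon,
$$M(-\kappa^2) = -\kappa - \int_0^{+\infty} A(t)\,e^{-2\kappa t}\,dt,$$
which is the sharp form of the local Borg--Marchenko theorem and identifies the inverse data with a Laplace transform. Then $M_1(-\kappa^2) - M_2(-\kappa^2)$ is the Laplace transform of $A_1 - A_2$ and vanishes at every $\kappa = \kappa_k$; the substitution $u = e^{-2t}$ turns this into the moment conditions $\int_0^1 G(u)\,u^{\kappa_k}\,du = 0$ for a fixed $G \in \L(0,1)$ built from $A_1-A_2$. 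Because $\kappa_k \sim c_{d-1}k^{1/(d-1)}$ with $d \geq 3$ gives $\sum_k \kappa_k^{-1} = +\infty$, the Müntz--Szász theorem forces $G \equiv 0$, hence $A_1 \equiv A_2$ and therefore $q_{f_1}=q_{f_2}$ on $[0,+\infty)$. The delicate points to secure are the validity of the global Laplace representation when $q_f$ is merely $C^{m}$ and the operator is in the limit-point case at $+\infty$ (a consequence of (\ref{f})), and the membership of $G$ in the class where Müntz--Szász applies.

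Finally I would reconstruct the geometry. The identity (\ref{qf}) reads $(f^{d-2})'' = \big(q_f + \tfrac{(d-2)^2}{4}\big) f^{d-2}$, a linear second-order ODE for $\phi = f^{d-2}$ whose coefficient is now determined; since $\phi(0) = f(0)^{d-2}$ and $\phi'(0) = (d-2)f(0)^{d-3}f'(0)$ are fixed by $f(0)$ and $f'(0)$, the Cauchy--Lipschitz theorem gives $\phi$, hence $f$, uniquely. Unwinding $f(x) = c(e^{-x})e^{-x/2}$ then recovers $c$, which completes the proof.
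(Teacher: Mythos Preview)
Your proposal is correct and its architecture matches the paper's: separation of variables, the identity $\sigma_k = (d-2)f'(0)/f^3(0) - M(-\kappa_k^2)/f^2(0)$, recovery of $f(0),f'(0)$ from the leading asymptotics, Simon's $A$-function representation of $M$, and finally the linear ODE $(f^{d-2})'' = (q_f + (d-2)^2/4)f^{d-2}$ with known Cauchy data. The one genuine divergence is in the completeness step. You feed the identities $\int_0^\infty (A_1-A_2)(t)e^{-2\kappa_k t}\,dt = 0$ into M\"untz--Sz\'asz via $u=e^{-2t}$, using $\sum \kappa_k^{-1}=\infty$. The paper instead restricts to the subsequence $(\kappa_{k^{d-1}})$, which by the Weyl law is a bounded perturbation of an arithmetic progression, works on a finite window $[0,a]$, and applies the Duffin--Schaeffer theorem (their Proposition~3.1) to force $A_1=A_2$ on $[0,a]$; the local Borg--Marchenko theorem then gives $q_{f_1}=q_{f_2}$ on $[0,a]$ for every $a$. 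Your route is shorter for the bare uniqueness statement; theirs is more expensive but yields strictly more, namely the \emph{local} uniqueness of Theorem~1.4 (and the fact that the sparse subsequence $(\sigma_{k^{d-1}})$ already suffices), of which Theorem~1.3 is then an immediate corollary. One technical point you flag is real: since Simon's estimate only gives $|A_1-A_2|(t)\le C e^{(\|q_{f_1}\|_1+\|q_{f_2}\|_1)t}$, the function $G$ you build on $(0,1)$ may blow up like $u^{-\beta}$ at $0$; you should absorb a power $u^{\beta'}$ into $G$ and shift the exponents accordingly before invoking M\"untz--Sz\'asz, which is harmless since $\sum 1/(\kappa_k - c)=\infty$ for any constant $c$.
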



\vspace{0.1cm}
In dimension 2, the problem of recovering the metric from the Steklov spectrum on the unit disk was studied in a paper by Jollivet and Sharafutdinov \cite{JoSh}. They show that if the DN maps associated to the metrics $g_j$, $j=1,2$ are intertwined (which is a stronger hypothesis than the Steklov isospectrality), then the metrics $g_j$  are equal (up to a diffeomorphism and a conformal factor which is equal to $1$ on the boundary of the unit disk).

\vspace{0.2cm}
In fact, we prove an even better uniqueness result than Theorem \ref{UniquenessSteklov}: the subsequence $(\sigma_{k^{d-1}})$ determines uniquely the warping function $c$. The proof is based on the following \emph{local} uniqueness property:

\begin{thm} \label{LocalUniquenessSteklov}
 Let $(M,g)$ and $(M,\tilde{g})$ be Riemannian manifolds given by (\ref{Manifold})-(\ref{Metric}).
 Then, for a positive constant $a > 0$,  the two following assertions are equivalent:
\begin{eqnarray}
  \sigma_{k^{d-1}} - \tilde{\sigma}_{k^{d-1}} &=&  O(e^{-2a\kappa_{k^{d-1}}}), \quad k \to \infty. \label{Error} \\
 c(r)  &=&   \tilde{c}(r), \quad \forall r \in [e^{-a} ,1]. \label{LocalUniqueness}
\end{eqnarray}
\end{thm}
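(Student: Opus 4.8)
The plan is to diagonalise the DN map by separation of variables, to identify each Steklov eigenvalue with the boundary value of a Weyl--Titchmarsh function, and thereby convert the statement into a \emph{local Borg--Marchenko} theorem on the half-line, which one finally transfers through the discrete sequence $(\kappa_{k^{d-1}})_k$ by exploiting the fact that it has asymptotically constant gaps. Concretely, I would first carry out the separation of variables in the coordinate $x=-\log r$: writing $u = v(x)Y_k(\omega)$ with $-\triangle_{g_S}Y_k = \mu_k Y_k$ and performing the Liouville substitution $w = f^{d-2}v$, the equation $\triangle_g u = 0$ becomes the Schrödinger equation $-w'' + q_f w = -\kappa_k^2 w$ on $[0,+\infty[$, with $q_f$ as in (\ref{qf}) and $\kappa_k$ as in (\ref{kappak}). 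Selecting in each mode the Weyl solution that is $L^2$ near $x=+\infty$ (the singular centre $r=0$) and computing the outer normal derivative at $x=0$, one finds that the DN map acts diagonally with
\begin{equation*}
\sigma_k = \frac{(d-2)f'(0)}{f^3(0)} - \frac{M(\kappa_k)}{f^2(0)},
\end{equation*}
where $M(\kappa) = w'(0,\kappa)/w(0,\kappa)$ is the Weyl--Titchmarsh function of $-\partial_x^2 + q_f$ at energy $-\kappa^2$, satisfying $M(\kappa) = -\kappa + o(1)$ as $\kappa \to +\infty$. The same holds for $\tilde g$, so the inverse data amounts to the family $M(\kappa_{k^{d-1}}) - \tilde M(\kappa_{k^{d-1}})$.

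The bridge between the two assertions is the Laplace--transform (or $A$-amplitude) representation of $M$: for $\kappa$ large one has $M(\kappa) = -\kappa - \int_0^{+\infty} A(\alpha)\,e^{-2\kappa\alpha}\,d\alpha$, where the amplitude $A(\alpha)$ is determined by, and conversely determines, the restriction of $q_f$ to $[0,\alpha]$. Subtracting, the $-\kappa$ term cancels and
\begin{equation*}
M(\kappa) - \tilde M(\kappa) = -\int_0^{+\infty} \big(A(\alpha) - \tilde A(\alpha)\big)\,e^{-2\kappa\alpha}\,d\alpha .
\end{equation*}
For the implication (\ref{LocalUniqueness})$\Rightarrow$(\ref{Error}): if $c = \tilde c$ on $[e^{-a},1]$ then $f = \tilde f$ on $[0,a]$, hence $q_f = q_{\tilde f}$ and $A = \tilde A$ on $[0,a]$, so the integral above starts at $\alpha=a$ and is $O(e^{-2a\kappa})$; since moreover $f(0)=\tilde f(0)$ and $f'(0)=\tilde f'(0)$, the two prefactors in $\sigma_k$ coincide and $\sigma_{k^{d-1}} - \tilde\sigma_{k^{d-1}} = -\frac{1}{f^2(0)}\big(M-\tilde M\big)(\kappa_{k^{d-1}}) = O(e^{-2a\kappa_{k^{d-1}}})$.

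For the converse (\ref{Error})$\Rightarrow$(\ref{LocalUniqueness}) I would argue in three steps. Inserting the two leading terms of the Steklov asymptotics, namely $\sigma_k = \frac{(d-2)f'(0)}{f^3(0)} + \frac{\kappa_k}{f^2(0)} + o(1)$ (valid already in the present $C^m$ setting), into the hypothesis forces the coefficient of $\kappa_{k^{d-1}}$ and the constant term to agree, whence $f(0)=\tilde f(0)$ and $f'(0)=\tilde f'(0)$, and therefore $M(\kappa_{k^{d-1}}) - \tilde M(\kappa_{k^{d-1}}) = O(e^{-2a\kappa_{k^{d-1}}})$. Writing $\Phi(\kappa) = \int_0^{+\infty}(A-\tilde A)(\alpha)\,e^{-2\kappa\alpha}\,d\alpha$, one must then deduce from $\Phi(\kappa_{k^{d-1}}) = O(e^{-2a\kappa_{k^{d-1}}})$ that $A = \tilde A$ on $[0,a]$. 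Here the decisive point is that, by the Weyl law (\ref{WeylLaw}), $\kappa_{k^{d-1}} = c_{d-1}\,k + O(1)$, so the relevant values of the spectral parameter form a sequence with asymptotically constant gaps; this density is exactly what lets one read off, from the sampled decay rate $e^{-2a\kappa}$, the left edge of the support of $A-\tilde A$ and conclude $(A-\tilde A)\mathbf{1}_{[0,a]} = 0$. Finally, $A=\tilde A$ on $[0,a]$ gives $q_f = q_{\tilde f}$ on $[0,a]$; integrating the second-order linear ODE $h'' = \big(q_f + \frac{(d-2)^2}{4}\big)h$ for $h = f^{d-2}$ with the matched Cauchy data at $x=0$ recovers $f = \tilde f$, equivalently $c = \tilde c$, on $[0,a]$, i.e. on $[e^{-a},1]$.

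The main obstacle is the discrete-to-support step: passing from an exponential decay estimate known only along the sequence $(\kappa_{k^{d-1}})$ to the vanishing of $A-\tilde A$ on the whole interval $[0,a]$. A Laplace transform can decay rapidly along a subsequence while growing overall, so the argument must quantitatively exploit the bounded gaps, together with analyticity and growth bounds on $\Phi$, to exclude this; this is where I expect the technical heart of the proof, presumably through a Phragmén--Lindelöf or moment-type completeness argument, to lie.
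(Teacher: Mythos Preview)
Your proposal is correct and follows essentially the same route as the paper: diagonalise via separation of variables to express $\sigma_k$ in terms of the Weyl--Titchmarsh function, invoke Simon's $A$-amplitude representation and local Borg--Marchenko theorem, and recover $f$ from $q_f$ by the second-order ODE with matched Cauchy data. The ``discrete-to-support'' obstacle you single out is exactly where the paper does the work (its Proposition~\ref{uniciteLaplace}): after rescaling so that the sampling points satisfy $|\nu_k-k|<1/4$, the Duffin--Schaeffer theorem for entire functions of exponential type---a Phragm\'en--Lindel\"of type result, as you guessed---upgrades boundedness along the sequence to boundedness on the whole half-line, and then Simon's Lemma~A.2.1 forces $A=\tilde A$ on $[0,a]$.
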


This theorem asserts that the knowledge of a subsequence of the Steklov spectrum up to an error that decays exponentially as $k \to \infty$ (see (\ref{Error})) determines uniquely the warping function $c$ in a neighbourhood of the boundary $r=1$ provided that $(S^{d-1},g_S )$ is known. Of course, Theorem \ref{UniquenessSteklov} is an immediate consequence of Theorem \ref{LocalUniquenessSteklov}. The question of whether the Steklov spectrum determines uniquely the warping function without the additional knowledge of the transversal Riemannian manifold $(S^{n-1},g_S)$ remains open.


\vspace{0.1cm}
Notice that the above result can be viewed as a weak form of a stability result. It stresses the important fact that the asymptotic behaviour of the Steklov spectrum allows one to determine the warping
function $c$ in a neighbourhood of the boundary $r=1$. In contrast, we shall show that the first Steklov eigenvalues determine in a stable
way the warping function $c$ on $[0,1]$. Before stating our main result, let us define our set $\mathcal{C}(A)$ of admissible warping functions $c(r)$, where $A$ is any positive constant, $m \geq 3$ and
$2 \leq p \leq m-1$:
\begin{equation}
\mathcal{C}(A) =\{ c \in C^m ([0,1])  \ {\rm{s.\ t.}} \ \ || c||_{C^m ([0,1])} + || \frac{1}{c} ||_{C^m ([0,1])}\leq A \ {\rm{and \ for}}\ k=1, ..., p-1, \ c^{(k)}(0) =0 \}.
\end{equation}


Roughly speaking, the conditions on the derivatives of $c(r)$ at $r=0$ tell us that the metric $g$ is relatively flat in a neighborhood of  the origin $r=0$.
These assumptions are only introduced to take into account the regularity of the warping function in the stability estimates obtained by integration by parts. We only assume $m \geq 3$ for the simplicity of the proofs. In particular, using (\ref{potradial}) we see that, if $ c \in \mathcal{C}(A)$,  the effective potential $q_f$ satisfies the uniform estimates:
\begin{equation}\label{estqA}
| q_f^{(k)} (x) | \leq C_A \  e^{-px} \ , \forall  x \geq 0, \ \forall k=0, ..., m-2,
\end{equation}
where the constant $C_A$ depends only on $A$.

\vspace{0.5cm}
In Section 4, we shall prove two log-type estimates results. The first one concerns the case where the metric $g$ is smooth, i.e we assume that the odd order derivatives of the warping functions
$c^{(2k+1)} (0) =0$, and we assume that the transversal metric  $g_S = d\Omega^2$ is the usual Euclidean metric on $S^{n-1}$. In this regular case, we have the following estimate:

\begin{thm} \label{LogStabSteklov}
Let $(M,g)$ and $(M,\tilde{g})$ be smooth  Riemannian manifolds given by (\ref{Manifold})-(\ref{Metric}) with  $c$, $\tilde{c} \in \mathcal{C}(A)$ where $A>0$ is fixed.
Assume  that for $\epsilon>0$ small enough, one has:
\begin{equation} \label{ErrorE}
  \sup_{k \geq 0} \  |\sigma_{k} - \tilde{\sigma}_{k}| \leq \e.
\end{equation}	
Then, there exists a positive constant $C_A$,  depending only on $A$ such that,
\begin{equation} \label{LogStability}
  || c-\tilde{c}||_{L^{\infty}(0,1)}  \leq C_A \ \left(\frac{1}{ \log ( \frac{1}{\e} )}\right)^{p-1}
\end{equation}
\end{thm}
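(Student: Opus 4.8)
The plan is to convert the Steklov data successively into data for a one–dimensional Schr\"odinger operator, then into a Hausdorff moment problem, and finally to invert that moment map; the logarithmic loss will appear only at the last step.

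\textbf{Reduction to a Weyl--Titchmarsh function.} First I would use the separation of variables of Section~2: setting $h=f^{d-2}$ and substituting $w=hv$, each angular mode $k$ turns the harmonic equation into the Schr\"odinger equation $-w''+q_f\,w=-\kappa_k^2\,w$ on $[0,+\infty)$, whose recessive solution $\psi_f(\cdot,\kappa)\sim e^{-\kappa x}$ at $+\infty$ carries the Weyl--Titchmarsh function $M_f(\kappa)=\psi_f'(0,\kappa)/\psi_f(0,\kappa)$. Matching with the boundary normalisation of the DN map gives $\sigma_k=-f^{-2}(0)M_f(\kappa_k)+(d-2)f'(0)f^{-3}(0)$, so that Theorem~\ref{AsympSteklov} is precisely the large-$\kappa$ expansion $M_f(\kappa)=-\kappa-\sum_j\beta_j(0)\kappa^{-j-1}+\cdots$. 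Comparing the coefficient of $\kappa_k$ and then the constant term in the hypothesis \eqref{ErrorE} and letting $k\to\infty$ forces $f(0)=\tilde f(0)$ and $f'(0)=\tilde f'(0)$; hence the two boundary contributions cancel and $|M_f(\kappa_k)-M_{\tilde f}(\kappa_k)|=f^2(0)\,|\sigma_k-\tilde\sigma_k|\le A^2\e$ for every $k$.

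\textbf{The Laplace/moment formula.} Next I would establish the key formula. For the two recessive solutions the Wronskian identity $(\psi_f\psi_{\tilde f}'-\psi_f'\psi_{\tilde f})'=(q_{\tilde f}-q_f)\,\psi_f\psi_{\tilde f}$, integrated over $[0,+\infty)$ with the normalisation $\psi_f(0)=\psi_{\tilde f}(0)=1$, yields
\[
  M_f(\kappa)-M_{\tilde f}(\kappa)=\int_0^{+\infty}\bigl(q_{\tilde f}-q_f\bigr)(x)\,\psi_f(x,\kappa)\,\psi_{\tilde f}(x,\kappa)\,dx .
\]
Since $\psi_f\psi_{\tilde f}=e^{-2\kappa x}(1+O(1/\kappa))$ uniformly, the leading part is the Laplace transform of $Q:=q_{\tilde f}-q_f$ at $2\kappa$. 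Putting $t=e^{-x}$ converts this into the moments $\int_0^1 \phi(t)\,t^{2\kappa_k-1}\,dt$ of the fixed function $\phi(t):=Q(-\log t)$; in the present regular case $g_S=d\Omega^2$ gives $\kappa_k=k+\tfrac{d-2}{2}$, so the exponents are the integers $2k+d-3$. The flatness encoded in \eqref{estqA} gives $|\phi(t)|\le C_A\,t^{p}$, i.e. $\phi$ vanishes to order $p$ at $t=0$. Thus the first step furnishes $\bigl|\int_0^1\phi(t)\,t^{2k+d-3}\,dt\bigr|\le C_A\,\e$ for all $k$, the $O(1/\kappa)$ corrections (transforms of products of the decaying potentials) being absorbed into the same bound.

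\textbf{The moment inversion: the main obstacle.} The core is then a quantitative Hausdorff/M\"untz estimate: from $\bigl|\int_0^1\phi(t)\,t^{2k+d-3}\,dt\bigr|\le C_A\,\e$ for all $k$ together with $|\phi(t)|\le C_A\,t^{p}$, deduce $\|\phi\|_{L^\infty}\le C_A\,(\log\tfrac{1}{\e})^{-(p-1)}$. I would prove this by constructing, near each target point, a finite combination $\sum_{k\le N}a_k\,t^{2\kappa_k}$ approximating a peak there; testing $\phi$ against it gives $|\phi|\lesssim(\sum_k|a_k|)\,\e+(\text{approximation error})$, where the $t^{p}$ vanishing lets one bound the error by the Jackson-type rate of such M\"untz sums. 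Optimising the number of nodes $N\sim\log(1/\e)$ balances the exponentially growing factor $\sum_k|a_k|$ against the polynomially decaying error, and the interplay between the vanishing order $p$ and that approximation rate yields the exponent $p-1$. Inverting this severely ill-posed moment map is the source of the logarithmic loss, and is the hard step.

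\textbf{Recovering the warping function.} Finally I would transfer the estimate on $Q=q_{\tilde f}-q_f$ to one on $c-\tilde c$. Writing $H=f^{d-2}-\tilde f^{d-2}$ and subtracting the two copies of $(f^{d-2})''=\bigl(q_f+\tfrac{(d-2)^2}{4}\bigr)f^{d-2}$, the function $H$ solves a linear second-order ODE with source $(q_f-q_{\tilde f})\,\tilde f^{d-2}$ and vanishing Cauchy data at $x=0$ (by the first step). A Gronwall/continuous-dependence argument in the class $\mathcal{C}(A)$, using the uniform bounds on $f,\tilde f$ and their decay, then controls $\|H\|_{L^\infty}$, hence $\|f-\tilde f\|_{L^\infty}$ and $\|c-\tilde c\|_{L^{\infty}(0,1)}$, by the same quantity $C_A\,(\log\tfrac{1}{\e})^{-(p-1)}$, which is the asserted estimate \eqref{LogStability}.
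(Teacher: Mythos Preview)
Your overall architecture matches the paper's: reduce to the Weyl--Titchmarsh function, convert to a moment problem, apply M\"untz--Jackson with a cutoff $N\sim\log(1/\epsilon)$, then pass back to $c-\tilde c$ via the ODE. But there is a genuine gap in the passage from your step~2 to step~3.

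You write $\psi_f\psi_{\tilde f}=e^{-2\kappa x}(1+O(1/\kappa))$ and assert that the $O(1/\kappa)$ corrections are ``absorbed into the same bound'' $C_A\epsilon$. They are not: those remainders are of size $C_A/\kappa_k$ with a constant depending only on $A$, not on $\epsilon$, so the moments of your $\phi$ are only controlled by $C_A\epsilon + C_A/\kappa_k$. In the M\"untz--Jackson projection estimate $\|\pi_n h\|_2^2\le\sum_{k\le n}\bigl(\sum_{p\le k}|C_{kp}|\,|m_p|\bigr)^2$ this produces an extra term of order $\sum_{k}\bigl(\sum_{p}|C_{kp}|/\kappa_p\bigr)^2$; since $|C_{kp}|$ grows geometrically in $k+p$ while $1/\kappa_p$ decays only like $1/p$, this piece blows up with $n$ with no $\epsilon$ in front, and the balancing $n\sim\log(1/\epsilon)$ collapses.

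The paper avoids this by an \emph{exact} rewriting rather than an approximation. The Marchenko representation $S_\infty(x,-\kappa^2)=e^{-\kappa x}+\int_x^\infty K(x,t)e^{-\kappa t}\,dt$ turns the product $S_\infty\tilde S_\infty$ identically into a Laplace transform, giving
\[
\int_0^\infty(q_{\tilde f}-q_f)\,S_\infty\tilde S_\infty\,dx \;=\; \int_0^\infty e^{-2\kappa x}\,B[q_{\tilde f}-q_f](x)\,dx,
\]
where $B=\mathrm{Id}+C$ is a Volterra operator with kernel built from $K,\tilde K$. The function whose moments are genuinely $\le C_A\epsilon$ is therefore $B[q_{\tilde f}-q_f]$, not $q_{\tilde f}-q_f$ itself. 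The M\"untz--Jackson argument then yields $\|B[q_{\tilde f}-q_f]\|_{\mathcal H_\delta}\le C_A(\log(1/\epsilon))^{-(p-1)}$, and a separate proposition shows that $B:\mathcal H_\delta\to\mathcal H_\delta$ is an isomorphism with $\|B^{-1}\|\le C_{A,\delta}$, which transfers the bound to $q_{\tilde f}-q_f$. This Volterra/Marchenko step is the missing idea in your outline.

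Two smaller corrections. First, from $\sup_k|\sigma_k-\tilde\sigma_k|\le\epsilon$ one obtains $f(0)=\tilde f(0)$ exactly but only $|f'(0)-\tilde f'(0)|\le C_A\epsilon$, not equality; this is harmless for the bound on $|M-\tilde M|$ but your ``vanishing Cauchy data at $x=0$'' in the last step is not available. Second, precisely for that reason the paper integrates the ODE for $f^{d-2}-\tilde f^{d-2}$ from $x=+\infty$, where both functions share the known asymptotics $e^{-(d-2)x/2}$, rather than from $x=0$.
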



\vspace{0.3cm}
This result is relatively close in spirit to the logarithmic stability estimates obtained by Alessandrini \cite{Ale}, and then improved by Novikov \cite{Nov}, for the Schr\"odinger equation on a bounded domain $M$ in $\R^d$. Indeed, it is well-known there is a close connection between the DN map which we have defined in the introduction for the metric $g = c^4(r)\  g_{e}$, ($g_{e}$ being the Euclidean metric on $\R^d$), and the DN map associated with the Schr\"odinger equation $(-\Delta_{g_e} +V) u=0$ where the potential $V$ is defined below. It is induced by the transformation law for the Laplace-Beltrami operator under conformal changes of metric:
\begin{equation} \label{ConformalScaling}
  -\Delta_{c^4 g_{e}} u = c^{-(d+2)} \left( -\Delta_{g_e} + V \right) \left( c^{d-2} u \right),
\end{equation}
where
\begin{equation} \label{q}
  V = c^{-d+2} \Delta_{g_e} c^{d-2}.
\end{equation}
\noindent
We recall that if $0$ is not an eigenvalue of $-\Delta_{g_e} + V$, then for all $ \psi \in H^{1/2}(\partial M)$, there exists a unique $u$ solution of
\begin{equation} \label{Eq0-Schrodinger}
  \left\{ \begin{array}{cc} (-\Delta_{g_e} + V) u = 0, & \textrm{on} \ M, \\ u = \psi, & \textrm{on} \ \partial M.
 \end{array} \right.
\end{equation}
The DN map for this Schr\"odinger equation is then given by $\Lambda_V (\psi) = \partial_{\nu}u_{|\partial M}$, where $\left( \partial_\nu u \right)_{|\partial M}$ is its normal derivative with respect to the unit outer normal vector $\nu$ on $\partial M$.

\vspace{0.2cm} \noindent
If the warping function $c(r)$ satisfies
$c_{|\partial M}=1$ and $\partial_{\nu}c_{|\partial M}=0$, (or equivalently  $c(1)=1$ and $c'(1)=0$ in our case), one can easily show (see for instance \cite{Sa}) that:
\begin{equation}\label{equivDN}
\Lambda_{g_e} = \Lambda_V.
\end{equation}
For the Schr\"odinger equation, Alessandrini \cite{Ale} has obtained the following stability estimate : assume that,  for $j=1,2$, the potentials $V_j$ belong to the Sobolev space $ W^{m,1}(M)$ with
$||V_j||_{W^{m,1}(M)} \leq A$. Then there exists a constant $C_A>0$ such that
\begin{equation}
|| V_1 -V_2 ||_{L^{\infty}(M)} \ \leq C_A \left( \log (3+ ||| \Lambda_{V_1} - \Lambda_{V_2}|||^{-1} \right)^{-\alpha},
\end{equation}
with $\alpha = 1 - \frac{d}{m}$ and where $||| B |||$ is the norm of the operator $B : L^{\infty}(\partial M) \rightarrow L^{\infty}(\partial M)$.
This stability estimate was then improved by Novikov in \cite{Nov}, where he proved one can take $\alpha = m-d$.

\vspace{0.2cm}
In Theorem \ref{LogStabSteklov}, under some additional assumptions on the derivatives of the warping function at $r=0$, we obtain a rather similar stability estimate with, roughly speaking,  $\alpha$ replaced by  $p-1$. Note that our potential V is only $C^{m-2}$ and that $p \leq m-1$. We emphasize that we do not assume that $c(1)=1$ and $c'(1)=0$, thus the connection between $\Lambda_g$ and $\Lambda_V$ is not so clear and depends implicitly on the unknown values $c(1)$ and $c'(1)$ which we are trying to estimate.


\vspace{0.5cm}
In the singular case, we also obtain a log-type estimate result, but which is much less precise. Indeed, in this case, we do not have  explicit formula for the angular eigenvalues $\kappa_k$, so we need to use
the Weyl's law to reasonably approximate the $\kappa_k$. We can prove the following theorem:

\begin{thm} \label{LogStabSteklovsing}
Let $(M,g)$ and $(M,\tilde{g})$ be singular Riemannian manifolds given by (\ref{Manifold})-(\ref{Metric}) with  $c$, $\tilde{c} \in \mathcal{C}(A)$ where $A>0$ is fixed.
Assume  that for $\epsilon>0$ small enough, one has:
\begin{equation} \label{ErrorE1}
  \sup_{k \geq 0} \ |\sigma_{k} - \tilde{\sigma}_{k}| \leq \e.
\end{equation}	
Then, there exists $\theta \in (0,1)$ and a positive constant $C_A$,  depending only on $A$ such that,
\begin{equation} \label{LogStability1}
  || c-\tilde{c}||_{L^{\infty}(0,1)}  \leq C_A \ \left(\frac{1}{ \log ( \frac{1}{\e} )}\right)^{(p-1)\theta}
\end{equation}
\end{thm}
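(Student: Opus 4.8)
The plan is to reuse the machinery developed for the regular case (Theorem~\ref{LogStabSteklov}), replacing the explicit angular eigenvalues by their Weyl asymptotics~(\ref{WeylLaw}). Via the separation of variables of Section~2, each Steklov eigenvalue is of the form $\sigma_k = F(\kappa_k)$, with $\kappa_k$ as in~(\ref{kappak}) and $F$ built from the Weyl--Titchmarsh function of the one-dimensional operator $-u''+q_f u$ on $[0,+\infty)$; the same holds for $\tilde\sigma_k = \tilde F(\kappa_k)$, with the \emph{same} $\kappa_k$ since $g_S$ is fixed. After isolating the boundary contributions $\frac{(d-2)f'(0)}{f^3(0)}$ and $\frac{\kappa_k}{f^2(0)}$ of~(\ref{AS1})---whose counterparts for $\tilde g$ are pinned down by combining Theorem~\ref{AsympSteklov} with the uniform bound~(\ref{ErrorE1})---the formula underlying Theorem~\ref{LogStabSteklov} expresses the remaining part of $\sigma_k-\tilde\sigma_k$, up to a controlled error, as the Laplace transform
\[
  \mathcal{L}[\Delta](2\kappa_k)=\int_0^{+\infty} e^{-2\kappa_k x}\,\Delta(x)\,dx,
\]
where $\Delta$ measures the difference of the effective potentials $q_f-q_{\tilde f}$ and hence, through~(\ref{f}) and~(\ref{qf}), the difference $c-\tilde c$. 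By~(\ref{estqA}), $\Delta$ obeys $|\Delta(x)|\le C_A e^{-px}$, so $\mathcal{L}[\Delta]$ is holomorphic and bounded on $\{\mathrm{Re}\,s>-p\}$, and~(\ref{ErrorE1}) gives $|\mathcal{L}[\Delta](2\kappa_k)|\le C_A\,\e$ for every $k$.

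The heart of the argument is then a quantitative inversion of the Laplace transform from this discrete, noisy data. Writing $t=e^{-2x}$ turns the problem into a moment problem on $(0,1]$ with exponents $\kappa_k$, in which the decay $|\Delta(x)|\le C_A e^{-px}$ becomes a vanishing of order $\tfrac{p}{2}$ at $t=0$. To reconstruct $\Delta$ down to depth $x=X$ one may only use frequencies $\kappa_k$ for which the signal $e^{-2\kappa_k X}$ exceeds the noise level $C_A\e$, i.e.\ $\kappa_k\lesssim \log(1/\e)/X$. Balancing the truncation error incurred by this frequency cutoff---governed by the decay and $C^m$ regularity of $\Delta$---against the noise and optimizing in $X\sim\log(1/\e)$ produces the rate $(\log(1/\e))^{-(p-1)}$ of Theorem~\ref{LogStabSteklov}. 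In the regular case the exponents $\kappa_k=k+\tfrac{d-2}{2}$ are explicit and equally spaced, so this optimization is lossless. In the singular case one knows the $\kappa_k$ only through~(\ref{WeylLaw}), with an $O(1)$ uncertainty; since $e^{-2\kappa_k x}$ then carries a multiplicative ambiguity $e^{O(x)}$ that grows with depth, the usable reconstruction depth is cut down to a fractional power $(\log(1/\e))^{\theta}$, and the optimization yields the degraded rate $(\log(1/\e))^{-(p-1)\theta}$ with $\theta\in(0,1)$.

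It remains to convert the $L^\infty$ control of $\Delta=q_f-q_{\tilde f}$ into the bound~(\ref{LogStability1}) on $c-\tilde c$. This is done by integrating the second-order relation~(\ref{qf}), viewed as an ODE for $f^{d-2}$ with source $q_f$, starting from the boundary values $f(0),f'(0)$ already controlled above and using the uniform $C^m$ bounds encoded in the admissible class $\mathcal{C}(A)$ to keep all constants dependent on $A$ only; passing from $f$ back to $c$ is immediate from~(\ref{f}). The main obstacle is the step of the second paragraph in the singular case: making precise, with constants depending only on $A$, how the irregular, Weyl-distributed nodes $2\kappa_k$ and the $O(1)$ error in~(\ref{WeylLaw}) limit the reconstruction depth, and extracting the exponent $\theta$. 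This is exactly the point at which the singular estimate is necessarily less precise than its regular counterpart.
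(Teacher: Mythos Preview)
Your outline follows the right broad strategy --- reduce to a moment/Laplace problem and reuse the regular-case machinery --- but two concrete points are off.

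First, the integrand in the Laplace representation is not $\Delta=q_f-q_{\tilde f}$. The identity underlying Lemma~\ref{eqint} gives $\int_0^\infty (q_{\tilde f}-q_f)S_\infty \tilde S_\infty\,dx$, and it is only after inserting the Marchenko representation (Lemma~\ref{Marchenko}) that one obtains a clean Laplace transform $\int_0^\infty e^{-2\kappa x}\,B[q_{\tilde f}-q_f](x)\,dx$, where $B$ is the Volterra operator of Section~4.1. The invertibility of $B$ on the weighted space $\mathcal H_\delta$ (Proposition~\ref{isomorphism}) is then essential to pass from control of $B[q_{\tilde f}-q_f]$ back to $q_{\tilde f}-q_f$. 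Your sketch skips this structure entirely.

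Second, and more seriously, your heuristic for the loss $\theta<1$ misidentifies the mechanism. The $\kappa_k$ are \emph{not} known only up to $O(1)$: the transversal metric $g_S$ is fixed, so the $\kappa_k$ are exact numbers and there is no ``multiplicative ambiguity $e^{O(x)}$'' in $e^{-2\kappa_k x}$. The actual source of the degradation is the M\"untz--Jackson machinery of Section~4.2. That theory (Theorem~\ref{tailleepsilon}) requires exponents in the separate case $\lambda_{k+1}-\lambda_k\ge 2$. In the regular case $\kappa_k=k+\tfrac{d-2}{2}$ gives $\lambda_k=2k+b$ directly. In the singular case one only has Weyl's law $\kappa_{k^{d-1}}=c_{d-1}k+O(1)$, so one must thin the subsequence $\nu_k=\kappa_{k^{d-1}}$ further to $\nu_{Nk}$ with $N$ large enough that $B:=c_{d-1}N$ forces the gap condition. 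The resulting exponents satisfy $\lambda_k\approx 2Bk$ with $B>1$, and the index of approximation becomes
\[
\epsilon_\infty(\Lambda)=\prod_{k=1}^n\frac{\lambda_k-1}{\lambda_k+1}=O\bigl(n^{-1/B}\bigr)
\]
instead of $O(n^{-1})$. Running the balance argument of Section~4.4 with this weaker approximation rate produces $\theta=1/B\in(0,1)$. Your proposal leaves exactly this step as ``the main obstacle'' without a mechanism; the paper's mechanism is the one above, and it is not the one you describe.
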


\begin{rem}
 In the definition of our admissible warping functions set $\mathcal{C}(A)$, we impose that the integer $p\geq 2$. In particular, the first derivative at $r=0$
 of the warping function $c(r)$ must vanish. When the metric $g$ is smooth, this condition is automatically satisfied. When the metric $g$ is singular, we only make this assumption for simplicity of exposition in order to use the same strategy as for the regular case. This technical assumption could be certainly relaxed with a little effort, and one would obtain a logarithmic stabilily estimate close to the one stated in Theorem \ref{LogStabSteklovsing}.
\end{rem}

\vspace{0.2cm}

The rest of our paper is organized as follows. In Section \ref{2}, using the separation of variables, we recall the standard construction of the Dirichlet to Neumann map (see \cite{DKN2}, \cite{DKN3}, \cite{DKN4} for details), and some basics facts on the Steklov spectrum. In Section \ref{3}, we prove that the warping function $c(r)$ is uniquely determined by the knowledge of the Steklov spectrum. We  also obtain a local uniqueness result from the approximate knowledge of the Steklov spectrum. In Section \ref{4}, we prove our stability results: the proofs are based on the M\"untz-Jacskon approximation, starting from the knowledge of the Hausdorff moments.


\Section{The Steklov spectrum} \label{2}

In this section, we construct explicitly the Steklov spectrum by using the warped product structure of the manifold $(M,g)$. 
More precisely, we use the underlying symmetry of the warped product in order to diagonalize the DN map onto the Hilbert basis of 
harmonics $\{Y_k\}_{k \geq 0}$, \textit{i.e.} the normalized eigenfunctions, of $-\triangle_{g_S}$. On each harmonic, the DN map acts as an operator of multiplication by 
essentially the Weyl-Titchmarsh function associated to the countable family of Schr\"odinger operators arising from the separation of variables procedure. 
The Weyl-Titchmarsh theory will then allow us to prove the asymptotic of theorem \ref{AsympSteklov} as well the (local) uniqueness results of 
Theorems \ref{UniquenessSteklov} and \ref{LocalUniquenessSteklov} in Section \ref{3}.

Let us first solve the Dirichlet problem (\ref{Dirichlet}). In the coordinate system $(x,\o)$, the Laplace equation $-\triangle_g u = 0$ reads
\begin{equation} \label{LaplaceEq}
  [-\partial_x^2 - \triangle_{g_S} + q_f(x)] v = -\frac{(d-2)^2}{4} v,
\end{equation}
where $v = f^{d-2} u$ and $q_f$ is given by (\ref{qf}). Observe that under the hypothesis (\ref{f}), the effective potential $q_f$ is a smooth function on $[0,+\infty)$ that satisfies the asymptotics
\begin{equation} \label{AsympQf}
  q_f(x) = O(e^{- p x}), \quad x \to \infty.
\end{equation}

We now use the warped structure to separate variables. We thus look for solutions of (\ref{LaplaceEq}) of the form
\begin{equation} \label{SepaV}
  v = \sum_{k = 0}^\infty v_k(x) Y_k.
\end{equation}
Hence, for each $k \geq 0$, the functions $v_k$ satisfy the Schr\"odinger equation on the half-line
\begin{equation} \label{ODE-x}
  -v_k'' + q_f(x) v_k = -\kappa_k^2 v_k, \quad x \in [0,\infty),
\end{equation}
where $\kappa_k$ is given by (\ref{kappak}) for all $k \geq 0$. Actually, it is very useful to consider complex spectral parameter $z \in \C$ and we 
are interested by some special solutions of the Sturm-Liouville equation:
\begin{equation}\label{SLz}
-v'' + q_f (x) v = zv.
\end{equation}

We denote by $\{C_0(x,z), S_0(x,z)\}$ the fundamental system of solutions of (\ref{SLz}) with a spectral parameter $z \in \C$ that satisfy Neumann and Dirichlet conditions at $x=0$ respectively, given by
\begin{equation} \label{FSS}
  C_0(0,z) = 1, \ C_0'(0,z) = 0, \ S_0(0,z) = 0, \  S_0'(0,z) = 1.
\end{equation}
Note that
\begin{equation} \label{Wronskian}
  W(C_0(x,z),S_0(x,z)) = 1,
\end{equation}
where the Wronskian is defined by $W(u,v) = uv' - u'v$. Moreover, the functions $z \mapsto C_0(x,z), S_0(x,z)$ are entire in $z$. Note also that under the hypothesis (\ref{AsympQf}), the Schr\"odinger operator $H = -\frac{d^2}{dx^2} + q_f$ is in the limit point case at $x=\infty$. In consequence, for all 
$z \in \C$, there exists a unique (up to constant factor) solution $S_\infty(x,z)$ of (\ref{SLz}) that is $L^2$ in a neighbourhood of $x=\infty$, 
(see \cite{RS3}, Theorem XI.57 where our spectral parameter $z= k^2$). We write this function as
\begin{equation} \label{WeylSolution}
  S_\infty(x,z) = A(z) \left( C_0(x,z) - M(z) S_0(x,z) \right).
\end{equation}
Using (\ref{Wronskian}), we thus get the following expressions for the function $A(z)$ and the Weyl-Titchmarsh function $M(z)$
\begin{equation} \label{WT}
  A(z) = W(S_{\infty}(x,z), S_0(x,z)), \quad \quad M(z) = - \frac{W(C_0(x,z), S_\infty(x,z))}{W(S_0(x,z), S_\infty(x,z))} = \frac{S'_\infty(0,z)}{S_\infty(0,z)}.
\end{equation}
For later use, we also introduce the characteristic functions
\begin{equation} \label{Char}
  \Delta(z) = W(S_0(x,z), S_\infty(x,z)) = -A(z), \quad d(z) = W(C_0(x,z), S_\infty(x,z)).
\end{equation}
We thus have:
\begin{equation}\label{WTquotient}
M(z) = - \frac{d(z)}{\Delta(z)}.
\end{equation}
Notice that the characteristic functions $\Delta(z)$ and $d(z)$  are analytic on $\C$, and the WT function $M$ is analytic on $\C\backslash [\beta, +\infty[$ 
with $-\beta$ sufficiently large. The zeros $(-\alpha_j^2)_{j \geq 0}$ of the function $z \mapsto \Delta(z)$ are precisely the Dirichlet eigenvalues of the self-adjoint operator $H$ (and thus are real) whereas the zeros $(-\gamma_j^2)_{j \geq 0}$ of the function $z \mapsto d(z)$ are the Neumann eigenvalues of the self-adjont operator $H$ (and thus are real too). Moreover, we know that $\sigma_{ess}(H) = [0,+\infty)$ and that the essential spectrum contains no embedded eigenvalues (\cite{RS4}, Thm XIII.56). In consequence of the spectral theorem, the eigenvalues $(-\alpha_j^2)_{j \geq 0}$ and $(-\gamma_j^2)_{j \geq 0}$ must then satisfy
\begin{equation} \label{CondEigen}
 -\alpha_j^2, -\gamma_j^2 \in [\min(q_f), 0] \quad , \quad \forall j \geq 0,
\end{equation}
with the usual convention that $[\min(q_f), 0] = \emptyset$ if $\min(q_f)>0$.

We have a more precise result on the location of the eigenvalues $-\alpha_j^2$ of $H$ thanks to

\begin{lemma} \label{EigenH}
  The discrete Dirichlet spectrum of $H = -\frac{d^2}{d x^2} + q_f$ is finite and contained in $(-\frac{(d-2)^2}{4}, 0)$.
\end{lemma}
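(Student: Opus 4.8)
The plan is to prove three things about the Dirichlet realization of $H = -\frac{d^2}{dx^2} + q_f$ on $[0,+\infty)$: that its discrete spectrum lies below $0$, that it lies above $-\frac{(d-2)^2}{4}$, and that it is finite. The upper bound is immediate: since $\sigma_{ess}(H) = [0,+\infty)$ and the essential spectrum carries no embedded eigenvalues, every isolated eigenvalue of finite multiplicity is strictly negative. Finiteness follows from the short-range character of the potential: the asymptotics (\ref{AsympQf}) give $|q_f(x)| \leq C e^{-px}$, so that $\int_0^{+\infty} (1+x)\,|q_f(x)|\,dx < \infty$, and classical results on Schr\"odinger operators with short-range potentials (Bargmann-type bounds, see \cite{RS4}) then cap the number of eigenvalues below the essential spectrum. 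The whole difficulty is concentrated in the lower bound.

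For the lower bound I would use a ground-state (Darboux) factorization. Set $\phi = f^{d-2}$; this is a \emph{positive} function on $[0,+\infty)$ and, by the very definition (\ref{qf}) of $q_f$, it satisfies $-\phi'' + (q_f + \frac{(d-2)^2}{4})\phi = 0$, that is, $\phi$ is a positive solution of $H\phi = -\frac{(d-2)^2}{4}\phi$. By (\ref{f}) one has $\phi(x) \sim e^{-(d-2)x/2}$ as $x \to +\infty$, so, since $d \geq 3$, $\phi$ is square-integrable near infinity and is therefore, up to a constant, the Weyl solution $S_\infty(\cdot, -\frac{(d-2)^2}{4})$. The key feature is that $\phi(0) = f(0)^{d-2} > 0$, so $\phi$ does \emph{not} satisfy the Dirichlet condition.

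Given any Dirichlet eigenfunction $u$ of $H$ at energy $E$, write $u = \phi w$; the condition $u(0)=0$ together with $\phi(0)\neq 0$ forces $w(0)=0$. A direct computation based on $-\phi'' + \frac{\phi''}{\phi}\phi = 0$ yields the factorization
\begin{equation*}
\Big(H + \tfrac{(d-2)^2}{4}\Big)(\phi w) = -\frac{1}{\phi}\big(\phi^2 w'\big)',
\end{equation*}
whence, pairing with $u = \phi w$ and integrating by parts,
\begin{equation*}
\Big\langle \big(H + \tfrac{(d-2)^2}{4}\big)u,\, u\Big\rangle = \int_0^{+\infty} \phi^2 (w')^2\,dx \;-\; \big[\phi^2 w' w\big]_0^{+\infty}.
\end{equation*}
The boundary term vanishes at $x=0$ because $w(0)=0$, and at $x=+\infty$ because $u$ and $u'$ decay exponentially while $\phi'/\phi \to -\frac{d-2}{2}$. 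Hence $\langle (H + \frac{(d-2)^2}{4})u,u\rangle = \int_0^{+\infty} \phi^2 (w')^2\,dx \geq 0$, with equality only if $w' \equiv 0$, i.e. $w \equiv 0$ by $w(0)=0$, so $u \equiv 0$. Thus $H + \frac{(d-2)^2}{4}$ is strictly positive on the Dirichlet domain, giving $E > -\frac{(d-2)^2}{4}$.

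The main obstacle is the rigorous justification of the boundary term at infinity and of the formal manipulations in the transform: one must check that $u$ decays fast enough relative to $\phi$ for $\phi^2 w' w$ to tend to $0$, which I would obtain from the standard exponential decay estimates for $L^2$ eigenfunctions of Schr\"odinger operators with exponentially decaying potentials. An alternative that sidesteps this bookkeeping is to invoke Allegretto--Piepenbrink oscillation theory directly: the existence of the nowhere-vanishing solution $\phi$ of $(H + \frac{(d-2)^2}{4})\phi = 0$ shows that $-\frac{(d-2)^2}{4}$ is at most the bottom of the spectrum, and strictness follows since $\phi$, having no zero, cannot be a Dirichlet eigenfunction, so $-\frac{(d-2)^2}{4}$ itself is not an eigenvalue.
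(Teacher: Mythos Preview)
Your proof is correct and uses essentially the same idea as the paper: the paper conjugates $H + \tfrac{(d-2)^2}{4}$ by the positive solution $\phi = f^{d-2}$ to obtain a manifestly nonnegative operator $K = -\phi^{-2}\frac{d}{dx}\big(\phi^{2}\frac{d}{dx}\big)$ on the weighted space $L^2(\R^+;\phi^{2}\,dx)$, which is exactly your factorization $(H+\tfrac{(d-2)^2}{4})(\phi w) = -\phi^{-1}(\phi^{2} w')'$ recast in operator language. Your treatment of the boundary term at infinity and of finiteness via short-range (Bargmann-type) bounds is more explicit than the paper's, but the core argument is the same ground-state transformation.
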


\begin{proof}
Introduce the operators $L = H + \frac{(d-2)^2}{4}$ and $K = f^{-d+2} L f^{d-2}$. An easy calculation shows that
$$
  K = -\frac{1}{f^{d-2}} \frac{d}{dx} \left( f^{d-2} \frac{d}{dx} \right).
$$
We remark that $K$ is selfadjoint on $L^2(\R^+; f^{2d-4}(x) dx)$ if we impose Dirichlet boundary condition at $x=0$. Moreover, the operator $K$ is clearly positive. Hence we have $\sigma_{pp}(K) \subset (0,+\infty)$. But $\lambda$ is an eigenvalue of $K$ if and only if $\lambda - \frac{(d-2)^2}{4}$ is an eigenvalue of $H$. In particular, we obtain that an eigenvalue $-\alpha_j^2$ of $H$ always satisfy $-\alpha_j^2 > -\frac{(d-2)^2}{4}$. Together with (\ref{CondEigen}), this proves the result.
\end{proof}

\begin{coro} \label{0NotEigenvalue}
  $0$ does not belong to the Dirichlet spectrum of $-\Delta_g$.
\end{coro}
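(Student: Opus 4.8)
The plan is to deduce Corollary \ref{0NotEigenvalue} directly from Lemma \ref{EigenH} by tracing through the separation of variables that reduces $-\Delta_g u = 0$ to the family of Sturm-Liouville problems (\ref{ODE-x}). The key observation is that $0$ is a Dirichlet eigenvalue of $-\Delta_g$ if and only if there is a nontrivial $u \in H^1(M)$ with $-\Delta_g u = 0$ and $u_{|\partial M} = 0$. Writing $v = f^{d-2} u$ and expanding $v = \sum_{k \geq 0} v_k(x) Y_k$ as in (\ref{SepaV}), the equation $-\Delta_g u = 0$ is equivalent, mode by mode, to the statement that each $v_k$ solves $-v_k'' + q_f(x) v_k = -\kappa_k^2 v_k$ on $[0,\infty)$, i.e. $v_k$ is an eigenfunction of $H$ at the spectral value $-\kappa_k^2$, subject to the Dirichlet condition $v_k(0) = 0$ coming from $u_{|\partial M} = 0$ together with the $L^2$/regularity requirement at the singular (or infinite) endpoint $x = +\infty$.

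First I would make precise that the Dirichlet condition $u_{|\partial M} = 0$ transcribes to $v_k(0) = 0$ for every $k$, since $f$ is bounded and nonvanishing near the boundary $x = 0$, and that membership of $u$ in $H^1(M)$ forces each $v_k$ to be the $L^2$-admissible solution $S_\infty(x, -\kappa_k^2)$ near $x = +\infty$ (this is exactly the limit point condition recorded after (\ref{WeylSolution})). Thus a nontrivial kernel element at mode $k$ would be a genuine Dirichlet eigenfunction of $H$ with eigenvalue $-\kappa_k^2$.

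Next I would invoke Lemma \ref{EigenH}: every Dirichlet eigenvalue $-\alpha_j^2$ of $H$ satisfies $-\alpha_j^2 > -\frac{(d-2)^2}{4}$. Now by the definition (\ref{kappak}) of $\kappa_k$ we have $\kappa_k^2 = \mu_k + \frac{(d-2)^2}{4} \geq \frac{(d-2)^2}{4}$, with equality only when $\mu_0 = 0$, so $-\kappa_k^2 \leq -\frac{(d-2)^2}{4}$ for every $k \geq 0$. These two facts are incompatible: no spectral value of the form $-\kappa_k^2$ can coincide with a Dirichlet eigenvalue of $H$, since the former lie in $(-\infty, -\frac{(d-2)^2}{4}]$ while the latter lie strictly inside $(-\frac{(d-2)^2}{4}, 0)$. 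Hence every mode $v_k$ must vanish identically, so $u = 0$, which is precisely the statement that $0$ is not a Dirichlet eigenvalue of $-\Delta_g$.

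The main obstacle I anticipate is not the spectral comparison, which is immediate, but rather the care required at the two boundary behaviours: verifying that the translation from a genuine $H^1(M)$ solution $u$ to the mode functions $v_k$ preserves both the Dirichlet condition at $x=0$ and the $L^2$-admissibility at $x=+\infty$, especially in the singular case where $f$ may degenerate and the change of variable $v = f^{d-2} u$ must be handled with the selfadjoint realization of $K$ from the proof of Lemma \ref{EigenH}. Once this dictionary between $u$ and $(v_k)$ is justified, the corollary follows from the strict inequality $-\kappa_k^2 \leq -\frac{(d-2)^2}{4} < -\alpha_j^2$.
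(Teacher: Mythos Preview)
Your argument is correct and follows exactly the paper's own proof: reduce via separation of variables to a putative Dirichlet eigenvalue $-\kappa_k^2$ of $H$, then obtain a contradiction from Lemma~\ref{EigenH} since $-\kappa_k^2 \le -\frac{(d-2)^2}{4}$ while every Dirichlet eigenvalue of $H$ lies strictly above $-\frac{(d-2)^2}{4}$. The extra care you flag about the $H^1$/limit-point translation is reasonable but the paper's proof treats this passage as routine.
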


\begin{proof}
  Assume the converse. Then there exists $u \ne 0$ such that $-\Delta_g u = 0$ on $M$ and $u=0$ on $\partial M$. Using separation of variables, this means that there exists $k \geq 0$, $v_k \ne 0$ and $v_k(0) = 0$ such that $H v_k = -\kappa_k^2 v_k$. In other words, $-\kappa_k^2$ is an eigenvalue of $H$ with Dirichlet boundary condition. According to Lemma \ref{EigenH}, we then must have $-\kappa_k^2 > -\frac{(d-2)^2}{4}$. But since, $\mu_k \geq 0$, we always have $-\kappa_k^2 \leq -\frac{(d-2)^2}{4}$. Contradiction.
\end{proof}

Finally, the DN map can be clearly diagonalized onto the Hilbert basis of harmonics $\{Y_k\}_{k \geq 0}$. If we represent the Dirichlet data as $\psi = \sum_{k \geq 0} \psi_k Y_k$, then the global DN map has the expression
\begin{equation} \label{GlobalDN}
  \Lambda_g \psi = \sum_{k =0}^\infty  (\Lambda_g^k \psi_k) Y_k,
\end{equation}
where the diagonalized DN map are defined by
\begin{equation} \label{DiagDN}
  \Lambda^k_g \psi_k = \frac{(d-2) f'(0)}{f^{d+1}(0)} v_k(0) - \frac{v'_k(0)}{f^d(0)}.
\end{equation}
Using that $v_k(x) = \alpha_k C_0(x,\kappa_k) + \beta_k S_0(x,\kappa_k)$, a straightforward calculation shows that the partial DN map $\Lambda_g^k$ acts as an operator of multiplication, precisely
\begin{equation} \label{DNk}
  \Lambda^k_g \psi_k = \left( \frac{(d-2) f'(0)}{f^3(0)} - \frac{M(-\kappa_k^2)}{f^2(0)} \right) \psi_k.
\end{equation}

We see immediately from (\ref{DNk}) that the partial DN map $\Lambda_g^k$ acts essentially by an operator of multiplication by the WT function $M(-\kappa_k^2)$ associated to (\ref{ODE-x})
up to some boundary values of the warping function $f$ ans its first derivative $f'$. In consequence, we infer that the Steklov spectrum of $(M,g)$, that is the set of
eigenvalues of $\Lambda_g$, is precisely given by
\begin{equation} \label{LinkSteklovWT1}
  \{\sigma_k, \ k \geq 0 \} = \left\{ \frac{(d-2) f'(0)}{f^3(0)} - \frac{M(-\kappa_j^2)}{f^2(0)}, \ j \geq 0 \right\}.
\end{equation}
In fact, we have the following exact identification:

\begin{lemma} \label{SteklovSpectrumWT}
  1. The function $y \in \R \to M(y)$ is strictly increasing on $\R \setminus \{-\alpha^2_j, \ j \geq 0\}$. \\
  2. Then $ \forall k \geq 0$, $\sigma_k = \frac{(d-2) f'(0)}{f^3(0)} - \frac{M(-\kappa_k^2)}{f^2(0)}$. \\
\end{lemma}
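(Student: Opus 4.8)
The plan is to prove the two assertions in turn, deriving the second from the first together with Lemma~\ref{EigenH}.

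\emph{Assertion 1 (monotonicity).} The function $M$ is a Herglotz function, and the statement is the classical fact that such a function is strictly increasing on the real intervals where it is analytic. To make this quantitative I would differentiate the defining relation $M(z)=S_\infty'(0,z)/S_\infty(0,z)$ in $z$. Write $\psi(x,z)=S_\infty(x,z)$, let a prime denote $\partial_x$, and set $\phi=\partial_z\psi$. From \eqref{SLz} the functions $\psi,\phi$ solve $-\psi''+q_f\psi=z\psi$ and $-\phi''+q_f\phi=z\phi+\psi$, whence a one-line computation gives $\frac{d}{dx}W(\psi,\phi)=-\psi^2$. Integrating over $[0,+\infty)$ and using that, for real $z$ in the resolvent set, $\psi(\cdot,z)$ is the $L^2$ (exponentially decaying) Weyl solution, so that $W(\psi,\phi)$ vanishes at $+\infty$, I obtain $W(\psi,\phi)(0)=\int_0^{+\infty}\psi(x,z)^2\,dx$. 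Since the numerator of $\partial_z M$ equals exactly $W(\psi,\phi)(0)$, this yields the key identity
\[
\partial_z M(z)=\frac{\int_0^{+\infty}S_\infty(x,z)^2\,dx}{S_\infty(0,z)^2}>0,
\]
valid at every real $z$ with $S_\infty(0,z)\neq0$. By \eqref{WeylSolution} and \eqref{Char} one has $\Delta(z)=-S_\infty(0,z)$, so the real points where $S_\infty(0,z)=0$ are precisely the Dirichlet eigenvalues $-\alpha_j^2$. Hence $M$ is strictly increasing on each connected component of $\R\setminus\{-\alpha_j^2\}$, which is Assertion~1.

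\emph{Assertion 2 (ordering).} By \eqref{LinkSteklovWT1}, counted with multiplicity the Steklov eigenvalues coincide with the numbers $\rho_j:=\frac{(d-2)f'(0)}{f^3(0)}-\frac{M(-\kappa_j^2)}{f^2(0)}$, $j\geq0$. Since $(\sigma_k)_{k\geq0}$ is by definition the non-decreasing rearrangement of this multiset, it suffices to check that the sequence $(\rho_j)_{j\geq0}$ is itself non-decreasing; the two sequences then agree term by term. Now $\kappa_0\le\kappa_1\le\cdots$ by \eqref{muk} and \eqref{kappak}, so $j\mapsto-\kappa_j^2$ is non-increasing, and moreover $-\kappa_j^2\le-\frac{(d-2)^2}{4}$ for all $j$ because $\mu_j\ge0$. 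On the other hand, Lemma~\ref{EigenH} places every pole $-\alpha_l^2$ of $M$ strictly above $-\frac{(d-2)^2}{4}$. Therefore all the points $-\kappa_j^2$ lie in the pole-free interval $(-\infty,-\frac{(d-2)^2}{4}]$, on which Assertion~1 gives that $M$ is strictly increasing. Consequently $j\mapsto M(-\kappa_j^2)$ is non-increasing and, since $f^2(0)>0$, $j\mapsto\rho_j$ is non-decreasing, as needed.

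\emph{Main obstacle.} The essential content is Assertion~1: establishing the Herglotz derivative identity and, above all, justifying that the Wronskian boundary term vanishes at $+\infty$ (the limit-point case at infinity and the $L^2$ decay of $S_\infty$ for $z$ below the spectrum). Once $M$ is known to be monotone and its poles have been localized by Lemma~\ref{EigenH} away from the spectral points $-\kappa_j^2$, Assertion~2 reduces to matching two identically ordered multisets.
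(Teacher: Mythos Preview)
Your proposal is correct and follows essentially the same route as the paper: you establish the identity $\partial_y M(y)=\Delta(y)^{-2}\int_0^\infty S_\infty(x,y)^2\,dx>0$, then use Lemma~\ref{EigenH} to place all the points $-\kappa_k^2$ in a pole-free half-line where $M$ is monotone, and conclude by matching the ordered multisets. The only cosmetic difference is that the paper obtains the derivative identity by a two-parameter Wronskian argument (computing $W(S_\infty(\cdot,y),S_\infty(\cdot,y^*))$ and letting $y^*\to y$), whereas you differentiate the ODE in $z$ directly; both yield the same formula and require the same boundary justification at $+\infty$.
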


\begin{proof}
1. Let $y \in \R \backslash\{-\alpha_j²\}$. Notice then that $C_0(x,y), S_0(x,y), S_\infty(x,y)$ are real and thus $\Delta(y)$ and $d(y)$ are real too.
Let $y^* \in \R$. Since $(S_\infty(x,y) S'_\infty(x,y^*) - S'_\infty(x,y) S_\infty(x,y^*))' = (y-y^*)S_\infty(x,y) S_\infty(x,y^*)$, and using that for $k=0,1$,
$\ds\lim_{x \to \infty} S_\infty^{(k)}(x,y) = 0$
and (\ref{Char}), we have:
$$
  (y-y^*) \int_0^\infty S_\infty(x,y) S_\infty(x,y^*) dx = \Delta(y) d(y^*) - \Delta(y^*) d(y).
$$
Hence letting $y^* \to y$, we get
$$
  \int_0^\infty S_\infty(x,y)^2 dx = d(y) \dot{\Delta}(y) - \Delta(y) \dot{d}(y) = \Delta^2(y) \dot{M}(y),
$$
where $\dot{}$ denotes the derivative with respect to $y$. We conclude from this that 1. holds. Observe that between two Dirichlet eigenvalues $-\alpha_j^2 < -\alpha_{j+1}^2$,
the WT function $M(y)$ is strictly increasing and goes from from $-\infty$ to $+\infty$.

2. From Lemma \ref{EigenH}, we know that the Dirichlet eigenvalues of $H$ satisfy $-\frac{(d-2)^2}{4} < -\alpha_j^2 \leq 0$ for all $j \geq 0$. In other words, we have
$$
  \forall j \geq 0, \quad 0 \leq \alpha_j < \frac{d-2}{2}.
$$
Since $\kappa_k \geq \frac{d-2}{2}$ for all $k \geq 0$ (see (\ref{kappak})), we deduce from 1. that the function $\kappa_k \mapsto M(-\kappa_k^2)$ is strictly decreasing
for $k \geq 0$. Hence 2. follows from (\ref{LinkSteklovWT1}) and the ordering of the Steklov spectrum $(\sigma_k)_{k \geq 0}$.

\end{proof}

Having obtained the exact expression of the Steklov spectrum in terms of the WT functions (\ref{WT}) evaluated at the $-\kappa_k^2$, we can prove easily Theorem \ref{AsympSteklov}
by using the well-known asymptotic of a WT function due to Danielyan and Levitan \cite{DL}. We refer for instance to \cite{Si1}, Section 4 and the references therein for a proof and more asymptotic results.

\begin{proof} [Proof of Theorem \ref{AsympSteklov}]
Recall the asymptotic of the WT function from \cite{Si1}, Thm 4.5. If $q_f \in C^N([0,\delta))$ with $\delta > 0$, then as $\kappa \to \infty$, we have
\begin{equation} \label{AsympWT}
  M(-\kappa^2) = - \kappa - \sum_{j=0}^N \beta_j(0) \kappa^{-j-1} + O(\kappa^{-N-2}),
\end{equation}	
where the constants $\beta_j(0)$ can be calculated inductively by (\ref{AS2}). Since the potential $q_f$ is assumed to be smooth, the asymptotic (\ref{AsympWT}) together with lemma \ref{SteklovSpectrumWT} lead to the result.
\end{proof}


\Section{Uniqueness and local uniqueness} \label{3}

In this section, we prove local uniqueness, that is Theorem \ref{LocalUniquenessSteklov}, under the assumption that the transversal Riemanniann manifold $(S^{n-1},g_S)$ is known. The general idea of local uniqueness inverse results arises in the different versions of the local Borg-Marchenko Theorem stated first by Simon in \cite{Si1} and proved differently or extended to singular settings in \cite{Be, ET, GS1, GS2, KST}. We shall follow here the initial version due to Simon that makes an intensive use of a representation of the WT function $M$ as the Laplace transform of what Simon called the $A$-function. Precisely, Simon showed in \cite{Si1}, Thm 2.1, that there exists a function $A$ on $[0,\infty)$ with $A-q_f$ continuous, obeying
\begin{equation} \label{EstA1}
  |A(\alpha) - q_f(\alpha)| \leq Q(\alpha)^2 e^{\alpha Q(\alpha)}, \quad Q(\alpha) = \int_0^\alpha |q_f(s)|ds,
\end{equation}
such that, if $\kappa > \half \|q_f\|_{L^1}$, then
\begin{equation} \label{ARepresentation}
  M(-\kappa^2) = -\kappa - \int_0^\infty A(\alpha) e^{-2\kappa\alpha} d\alpha.
\end{equation}
We also have
\begin{equation} \label{EstA2}
  |A(\alpha,q_f) - A(\alpha,\tilde{q_f})| \leq \|q_f - \tilde{q_f}\|_{L^1} [Q(\alpha) + \tilde{Q}(\alpha)] e^{\alpha [Q(\alpha)+\tilde{Q}(\alpha)]}.
\end{equation}
Finally, Simon also proved the local uniqueness result

\begin{thm}[\cite{Si1}, Thm 1.5] \label{UniquenessA}
  The potential $q_f$ on $[0,a]$ is a function of $A$ on $[0,a]$. Explicitly, if $q_f$ and $\tilde{q_f}$ are two potentials, let $A$ and $\tilde{A}$ be their $A$-functions. Then
$$
  A(\alpha) = \tilde{A}(\alpha), \ \forall \alpha \in [0,a] \ \Longleftrightarrow \ q_f(x) = \tilde{q_f}(x), \ \forall x \in [0,a].
$$
\end{thm}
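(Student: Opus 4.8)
My plan is to read the stated equivalence as a uniqueness statement for the nonlinear, strictly \emph{local} correspondence $q_f \mapsto A$ furnished by the Laplace-transform representation (\ref{ARepresentation}), and to organise the whole proof around the $x$-dependent amplitude $A(\alpha,x)$: the $A$-function associated with the shifted potential $q_f(\cdot+x)$, normalised so that $A(\alpha)=A(\alpha,0)$ and $A(\alpha,x)\to q_f(x)$ as $\alpha\downarrow 0$ (the limit being legitimate because (\ref{EstA1}) makes $A-q_f$ continuous). The decisive structural input I would establish, or invoke from Simon, is that $A(\alpha,x)$ solves the semilinear transport equation
\begin{equation*}
\partial_x A(\alpha,x)=\partial_\alpha A(\alpha,x)+\int_0^\alpha A(\beta,x)\,A(\alpha-\beta,x)\,d\beta .
\end{equation*}

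The implication $q_f=\tilde q_f$ on $[0,a]\Rightarrow A=\tilde A$ on $[0,a]$ is then immediate from \emph{locality}. The transport operator $\partial_x-\partial_\alpha$ has characteristics $x+\alpha=\mathrm{const}$, so the domain of dependence of a point $(\alpha,x)$ is the segment $[x,x+\alpha]$ of the axis $\alpha=0$ carrying the values $q_f$; taking $x=0$ shows that $A(\alpha)$ is a function of $q_f|_{[0,\alpha]}$ only, which gives this direction at once.

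For the substantive converse $A=\tilde A$ on $[0,a]\Rightarrow q_f=\tilde q_f$ on $[0,a]$, I would integrate the equation along characteristics. Fixing $c\le a$ and setting $\phi(x)=A(c-x,x)$ on the characteristic $x+\alpha=c$ turns the equation into the ODE $\phi'(x)=\int_0^{c-x}A(\beta,x)A(c-x-\beta,x)\,d\beta$, whose right-hand side only samples $A$ on characteristics $\beta+x\le c$. Given two potentials with $A=\tilde A$ on $[0,a]$, their Cauchy data $A(c,0)=\tilde A(c,0)$ agree, and the difference $\delta A=A-\tilde A$ obeys, on the triangle $\{\alpha,x\ge0,\ \alpha+x\le a\}$, a \emph{linear} variable-coefficient version of this ODE (with coefficients built from $A$ and $\tilde A$) carrying vanishing data on $\{x=0\}$. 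Since (\ref{EstA1}) bounds $A$, hence the coefficients of the linearised integrand, locally in $L^1$, a Gronwall estimate propagating the zero data up each characteristic forces $\delta A\equiv 0$ throughout the triangle; letting $\alpha\downarrow 0$ yields $q_f(c)=A(0^+,c)=\tilde A(0^+,c)=\tilde q_f(c)$ for all $c\in[0,a]$. The difference bound (\ref{EstA2}), with its extra factor $Q+\tilde Q$ reflecting the quadratic nonlinearity, is exactly what quantifies this Gronwall step.

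The main obstacle is the first step: deriving the integro-differential equation for $A(\alpha,x)$, i.e.\ understanding how the Weyl--Titchmarsh function $M(-\kappa^2)$---and hence, through (\ref{ARepresentation}), the amplitude $A$---transforms under translation of the base point $x$. This is the genuinely analytic heart of the argument and the place where the representation (\ref{ARepresentation}) and the estimates (\ref{EstA1})--(\ref{EstA2}) must be combined carefully; once the equation is in hand, both the locality used for one implication and the characteristic integration used for the other are essentially the method of characteristics for a semilinear transport problem.
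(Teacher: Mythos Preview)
The paper does not prove this theorem at all: it is quoted verbatim as Theorem~1.5 of Simon~\cite{Si1} and used as a black box in the proof of Theorem~\ref{LocalUniquenessSteklov}. So there is no ``paper's own proof'' to compare against.

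That said, your sketch is essentially a faithful outline of Simon's original argument in~\cite{Si1}: one introduces the $x$-dependent amplitude $A(\alpha,x)$ associated with the shifted potential, establishes the semilinear transport equation
\[
\partial_x A(\alpha,x)=\partial_\alpha A(\alpha,x)+\int_0^\alpha A(\beta,x)\,A(\alpha-\beta,x)\,d\beta,
\]
and then reads off both directions of the equivalence from the characteristic structure (locality for $q_f\Rightarrow A$, a Gronwall/uniqueness argument along characteristics for $A\Rightarrow q_f$). Your identification of the main analytic difficulty---deriving the evolution equation for $A(\alpha,x)$ from the translation behaviour of the Weyl--Titchmarsh function---is also accurate; this is precisely where the work lies in Simon's paper, and it is not something one can simply assert. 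If you intend this as a self-contained proof rather than a summary of~\cite{Si1}, that derivation would need to be supplied in full.
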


We shall use these results as follows.

\begin{proof}[Proof of theorem \ref{LocalUniquenessSteklov}]
Suppose that the assumption (\ref{Error}) is satisfied. Using Theorem \ref{AsympSteklov} and Lemma \ref{SteklovSpectrumWT}, we see immediately that
\begin{equation} \label{IniCond}
  f(0) = \tilde{f}(0), \quad f'(0) = \tilde{f}'(0).
\end{equation}
Hence the assumption (\ref{Error}) can be equivalently read as
\begin{equation} \label{Error1}
  M(-\kappa_{k^{d-1}}^2) - \tilde{M}(-\kappa_{k^{d-1}}^2) = O(e^{-2a\kappa_{k^{d-1}}}), \quad k \to \infty.
\end{equation}
We use then the representation of the WT functions (\ref{ARepresentation}), the behavior (\ref{AsympQf}) of the potentials $q_f,\, q_{\tilde{f}}$ and the estimate (\ref{EstA2}) to show that (\ref{Error1}) entails
\begin{equation} \label{LapTrans1}
  \int_0^a \left[ A(\alpha) - \tilde{A}(\alpha)\right] e^{-2\kappa_{k^{d-1}} \alpha} d\alpha = O(e^{-2a\kappa_{k^{d-1}}}), \quad k \to \infty.
\end{equation}

\noindent
Now, we need the following proposition which is a slight generalization to the case of noninteger $(\kappa_k)_{k \geq 0}$ of Proposition 2.4 in \cite{Hor1}.


\begin{prop}\label{uniciteLaplace}
Let $f \in L^1(0,a)$. Assume that
\begin{equation}\label{hypothesis}
\int_0^a \ e^{-\kappa_{k^{d-1}} t}  f(t)  \ dt \ =\ O(e^{-a\kappa_{k^{d-1}}})\ , \ \ k \rightarrow +\infty.
\end{equation}
Then, $f=0$ almost everywhere.
\end{prop}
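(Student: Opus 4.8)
The plan is to encode the hypothesis in a single holomorphic function and then extract the unknown via a Cauchy-transform (Plemelj jump) argument. Throughout write $\lambda_k := \kappa_{k^{d-1}}$, so that by the Weyl law \eqref{WeylLaw} one has $\lambda_k = c_{d-1}\,k + O(1)$ as $k\to\infty$; in particular the $\lambda_k$ are positive, tend to $+\infty$, and are \emph{asymptotically arithmetic}. First I would normalize by the reflection $s=a-t$: setting $g(s)=f(a-s)\in L^1(0,a)$, the hypothesis \eqref{hypothesis} becomes
\begin{equation*}
 m_k := \int_0^a e^{\lambda_k s}\,g(s)\,ds = O(1),\qquad k\to\infty,
\end{equation*}
and the goal is to deduce $g=0$ almost everywhere.

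The key object is the generating function $\Theta(\zeta)=\sum_{k\ge0} m_k\,\zeta^{\lambda_k}$. Since $|m_k|\le C$ and $\lambda_k\to+\infty$ linearly, this Dirichlet-type series converges absolutely for $|\zeta|<1$ and defines a function holomorphic on the unit disc; this is the only place the boundedness $m_k=O(1)$ is used, and it is exactly what replaces the vanishing hypothesis of a pure completeness statement. On the other hand, interchanging sum and integral (legitimate for $0<\zeta<e^{-a}$, where $\sup_{s\in[0,a]}\zeta e^{s}<1$ and everything converges absolutely) gives
\begin{equation*}
 \Theta(\zeta) = \int_0^a g(s)\,\Lambda(\zeta e^{s})\,ds,\qquad \Lambda(u):=\sum_{k\ge0} u^{\lambda_k}.
\end{equation*}
The function $\Lambda$ is holomorphic on $|u|<1$ with its dominant singularity at $u=1$. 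Analytic continuation of this identity shows that, as $\zeta$ moves toward $1$, the singularity of $\Lambda$ at $u=1$ is felt by $\Theta$ as a jump across a real segment, and that jump is proportional to $g$ at the point $s_0=-\log\zeta\in(0,a)$. Since $\Theta$ is in fact holomorphic on the whole disc, the jump must vanish, forcing $g\equiv0$.

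It is instructive to carry this out in the model arithmetic case $\lambda_k=c_{d-1}k$, which is precisely Proposition 2.4 of \cite{Hor1}. There $\Lambda(u)=\sum_k u^{c_{d-1}k}=(1-u^{c_{d-1}})^{-1}$, and after the substitution $w=e^{c_{d-1}s}\in[1,R]$ with $R=e^{c_{d-1}a}$, the function $\Theta$ becomes, in the variable $\eta=\zeta^{c_{d-1}}$, a genuine power series $\sum_k m_k\eta^{k}$ holomorphic on $|\eta|<1$, which simultaneously equals the Cauchy transform
\begin{equation*}
 \int_{1}^{R}\frac{\widetilde g(w)}{1-\eta w}\,dw,\qquad \widetilde g(w)=g\!\left(\tfrac{\log w}{c_{d-1}}\right),
\end{equation*}
a priori holomorphic only off the slit $[1/R,1]$. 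Comparing the two representations on their common domain and invoking the Sokhotski--Plemelj formula, the boundary values of this Cauchy transform from above and below the slit $(1/R,1)$ must coincide, because $\Theta$ is single-valued and holomorphic there; hence the associated density, that is $\widetilde g$, vanishes, whence $g=0$. I emphasize that this argument is \emph{free of any density or type threshold}: it works for every $a>0$, which is essential since Theorem \ref{UniquenessSteklov} requires letting $a\to\infty$.

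The main obstacle is the passage from integer to noninteger exponents — the ``slight generalization'' over \cite{Hor1}. When $\lambda_k=c_{d-1}k+O(1)$ is only asymptotically arithmetic, $\Lambda(u)=\sum_k u^{\lambda_k}$ is no longer the rational function $(1-u^{c_{d-1}})^{-1}$, so the clean reduction to a power series and an explicit Cauchy kernel breaks down. The plan here is to show that the bounded perturbation $\varepsilon_k:=\lambda_k-c_{d-1}k=O(1)$ affects only the \emph{regular} part of $\Lambda$ near $u=1$: writing $u^{\lambda_k}=u^{c_{d-1}k}\,u^{\varepsilon_k}$ and comparing $\Lambda$ with $(1-u^{c_{d-1}})^{-1}$, one must prove that $\Lambda$ continues holomorphically across the unit circle away from $u=1$ and retains at $u=1$ a singularity of the same polar type, so that the jump of $\int_0^a g(s)\,\Lambda(\zeta e^s)\,ds$ is still proportional to $g$. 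Controlling this singularity — equivalently, justifying the interchange of summation and integration up to the boundary $|u|=1$ and extracting the jump uniformly in the shifts $\varepsilon_k$ — is the delicate technical point, and is exactly where the quantitative content of the Weyl asymptotics \eqref{WeylLaw} must be used.
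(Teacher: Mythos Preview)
Your write-up is an honest proof sketch in the model case $\lambda_k=c_{d-1}k$, but the extension to the actual sequence $\lambda_k=\kappa_{k^{d-1}}=c_{d-1}k+O(1)$ is not carried out, and the step you flag as ``the delicate technical point'' is in fact a genuine gap. The assertion that $\Lambda(u)=\sum_{k}u^{\lambda_k}$ continues holomorphically across the unit circle away from $u=1$ and keeps a ``polar type'' singularity there is unjustified and, for a generic bounded perturbation $\varepsilon_k=\lambda_k-c_{d-1}k$, simply false: generalized Dirichlet/power series with incommensurable exponents typically have the unit circle as a natural boundary, and the factor $u^{\varepsilon_k}$ does not organize into a regular correction because $\varepsilon_k$ varies with $k$. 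Without that analytic continuation the Plemelj-jump argument cannot even be formulated, so as written the proof stops exactly where the difficulty begins.

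The paper avoids this obstruction altogether by working on the transform side rather than the generating-function side. One first rescales: with $\lambda_k=\kappa_{k^{d-1}}/c_{d-1}$ and a fixed large $N$, the subsequence $\nu_k=\lambda_{kN}/N$ satisfies $|\nu_k-k|<1/4$. The entire function $F(z)=e^{bz}\int_0^b g(y)e^{-zy}\,dy$ (with $b=c_{d-1}Na$ and $g$ the rescaled $f$) is of exponential type $b$, bounded on the imaginary axis, and the hypothesis gives $F(\nu_k)=O(1)$. The Duffin--Schaeffer theorem (a Phragm\'en--Lindel\"of type interpolation result for exponential-type functions sampled on perturbed integers) then yields $F(x)=O(1)$ for all $x>0$, i.e.\ $\int_0^b g(y)e^{-xy}\,dy=O(e^{-bx})$ for the \emph{continuous} parameter $x\to+\infty$. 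From there a standard Laplace-transform lemma (Simon \cite{Si1}, Lemma~A.2.1) gives $g=0$. The point is that Duffin--Schaeffer does the ``noninteger $\to$ continuous'' passage in one stroke, with no need to analyze the singularity structure of $\Lambda$; if you want to salvage your approach, that theorem (or an equivalent sampling/interpolation result) is precisely the missing ingredient.
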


\begin{proof}
Setting  $\lambda_k = \frac{1}{c_{d-1}} \kappa_{k^{d-1}}$, we deduce from (\ref{WeylLaw}) there exists $C>0$ such that $|\lambda_k - k | \leq  C$.
\vspace{0.2cm}
\par\noindent
For a fixed $N \in \N$ large enough, we set $\nu_k = \frac{\lambda_{kN}}{N}$, and we have $|\nu_k -k | \leq \frac{C}{N}<\frac{1}{4}$. Using (\ref{hypothesis}), a straightforward calculation gives:
\begin{equation}\label{Laplace}
\int_0^b g(y) e^{-\nu_k y} \ dy = O(e^{-b \nu_k}) , \ k \rightarrow + \infty,
\end{equation}
where we have set ${\displaystyle{b = c_{d-1} Na}}$, and ${\displaystyle{g(y) = f(\frac{y}{c_{d-1} N})}}$. Now, let us define for $z \in \C$,
\begin{equation}
F(z) = e^{bz} \int_0^b g(y) e^{-z y} \ dy.
\end{equation}
Clearly, $F(z)$ is an entire function which obeys
\begin{equation}
|F(z)| \leq ||g||_1 \  e^{b Re_+(z)},
\end{equation}
where $Re_+ (z)$ is the positive part of $Re \ z$. Moreover, from (\ref{Laplace}), we get $ F(\nu_k) = O(1)$. It follows from a theorem of Duffin and Schaeffer
(\cite{Boa}, Theorem 10.5.1) that $F(x)$ is bounded for $x>0$, or equivalently
\begin{equation}
\int_0^b g(y) e^{-x y} \ dy = O(e^{-bx}) \ ,\ x \rightarrow +\infty.
\end{equation}
Then, using (\cite{Si1}, Lemma A.2.1), we have  $g = 0$ almost everywhere in $(0,b)$ which concludes the proof of this Proposition.

\end{proof}


\noindent
Thus, Proposition \ref{uniciteLaplace} implies in particular that
$$
  A(\alpha) = \tilde{A}(\alpha), \quad \forall \alpha \in [0,a],
$$
from which we infer using Theorem \ref{UniquenessA} that
$$
  q_f(x) = q_{\tilde{f}}(x), \quad \forall x \in [0,a].
$$
Recalling the definition (\ref{qf}) of $q_f$, we thus see that
\begin{equation} \label{a1}
  (f^{d-2})''(x) = \frac{(\tilde{f}^{d-2})''(x)}{\tilde{f}^{d-2}(x)} f^{d-2}(x), \quad \forall x \in [0,a]
\end{equation}
We finish the proof seeing that (\ref{a1}) can be viewed as a linear second-order ODE for $f^{d-2}$. Recalling from (\ref{IniCond}) that the Cauchy data (\ref{IniCond}) of $f^{d-2}$
are equal to those of $\tilde{f}^{d-2}$, we conclude that the unique solution of (\ref{a1}) on $[0,a]$ is $f^{d-2} = \tilde{f}^{d-2}$. Whence the asserted result.

\vspace{0.5cm}
Conversely, assume that (\ref{LocalUniqueness}) holds. In particular,  $q_f(x)=q_{\tilde{f}}(x)$ for all $x \in (0,a)$. So, using Theorem \ref{UniquenessA}, we get $A(\alpha) = \tilde{A}(\alpha)$ for all $\alpha \in (0,a)$. Then, using the same arguments as in the first part of the proof, we obtain
\begin{equation}
M(-\kappa^2) = \tilde{M}(-\kappa^2) + O(e^{-2a\kappa}) \ \ , \ \ \kappa \rightarrow + \infty.
\end{equation}
Then, using Lemma \ref{SteklovSpectrumWT} and noting that $f(0), f'(0)$ are also known, we get immediately (\ref{Error}).

\end{proof}


\Section{Proof of Theorems \ref{LogStabSteklov} and \ref{LogStabSteklovsing}} \label{4}

\subsection{A Volterra  type integral operator}

Let us begin by an elementary lemma:

\begin{lemma}\label{eqint}
For any $\K$ large enough, we have:
\begin{equation*}
S_{\infty}(0,-\K^2) \tilde{S}_{\infty}(0,-\K^2) \left( M(-\K^2) - \tilde{M}(-\K^2) \right)=
\int_0^{+\infty} (q_{\tilde{f}}(x) -q_f(x)) \ S_{\infty}(x,-\K^2) \tilde{S}_{\infty}(x,-\K^2) \ dx.
\end{equation*}
\end{lemma}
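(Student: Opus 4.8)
The plan is to prove the identity by the classical Lagrange (Green's) identity applied to the two Weyl solutions, exactly as one proves Wronskian formulas for Sturm-Liouville problems. Fix $\K$ large and write $z = -\K^2$. By construction, $S_\infty(\cdot,z)$ solves $-S_\infty'' + q_f S_\infty = z S_\infty$ while $\tilde S_\infty(\cdot,z)$ solves $-\tilde S_\infty'' + q_{\tilde f} \tilde S_\infty = z \tilde S_\infty$. First I would form the modified Wronskian
$$
W(x) = S_\infty(x,z)\,\tilde S_\infty'(x,z) - S_\infty'(x,z)\,\tilde S_\infty(x,z),
$$
and differentiate. Replacing $\tilde S_\infty''$ and $S_\infty''$ using the two equations, the spectral terms $z S_\infty \tilde S_\infty$ cancel and one is left with
$$
W'(x) = S_\infty \tilde S_\infty'' - S_\infty'' \tilde S_\infty = \big(q_{\tilde f}(x) - q_f(x)\big)\, S_\infty(x,z)\,\tilde S_\infty(x,z),
$$
which is precisely the integrand on the right-hand side of the claim.

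Next I would integrate this identity over $[0,+\infty)$, so that
$$
W(+\infty) - W(0) = \int_0^{+\infty} \big(q_{\tilde f}(x) - q_f(x)\big)\, S_\infty(x,z)\,\tilde S_\infty(x,z)\, dx.
$$
The boundary term at infinity vanishes: since $H$ is in the limit point case and $\K$ is large, $S_\infty$ and $\tilde S_\infty$ are the $L^2$ Weyl solutions, and as already used in the proof of Lemma \ref{SteklovSpectrumWT} one has $\lim_{x\to\infty} S_\infty^{(k)}(x,z) = \lim_{x\to\infty}\tilde S_\infty^{(k)}(x,z) = 0$ for $k=0,1$; hence $W(+\infty)=0$. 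For the boundary term at $x=0$, I would use the definition of the Weyl-Titchmarsh functions in (\ref{WT}), namely $S_\infty'(0,z) = M(z) S_\infty(0,z)$ and $\tilde S_\infty'(0,z) = \tilde M(z)\,\tilde S_\infty(0,z)$, to get
$$
W(0) = S_\infty(0,z)\,\tilde S_\infty(0,z)\,\big(\tilde M(z) - M(z)\big).
$$
Substituting $W(+\infty)=0$ and this expression for $W(0)$ yields exactly
$$
S_\infty(0,z)\,\tilde S_\infty(0,z)\,\big(M(z) - \tilde M(z)\big) = \int_0^{+\infty} \big(q_{\tilde f}(x) - q_f(x)\big)\, S_\infty(x,z)\,\tilde S_\infty(x,z)\, dx,
$$
which is the assertion with $z=-\K^2$.

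The only genuine point requiring care — the main (mild) obstacle — is the justification that all the integrals converge and that the boundary term at infinity really vanishes. For this I would invoke that for $\K$ large the Weyl solutions decay exponentially (roughly like $e^{-\K x}$), while by (\ref{AsympQf}) the potentials satisfy $q_f, q_{\tilde f} = O(e^{-px})$; together these guarantee both the absolute convergence of the right-hand integral and the decay $W(x)\to 0$ as $x\to+\infty$, so that the integration of $W'$ over the half-line is legitimate. This is exactly why the statement is phrased for $\K$ large enough.
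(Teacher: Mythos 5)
Your proof is correct and is essentially identical to the paper's: both integrate the Lagrange/Wronskian identity for $S_\infty(x,z)\tilde S_\infty'(x,z)-S_\infty'(x,z)\tilde S_\infty(x,z)$ over $(0,\infty)$, kill the boundary term at infinity using the decay $S_\infty^{(k)}(x,z),\tilde S_\infty^{(k)}(x,z)\to 0$ for $k=0,1$, and rewrite the term at $x=0$ via $M(z)=S_\infty'(0,z)/S_\infty(0,z)$ from (\ref{WT}). If anything, your sign bookkeeping $W'(x)=\bigl(q_{\tilde f}(x)-q_f(x)\bigr)S_\infty(x,z)\tilde S_\infty(x,z)$ is the internally consistent one (the paper's first displayed equality carries a harmless sign slip), and your explicit convergence justification via the exponential decay of the Weyl solutions against $q_f,q_{\tilde f}=O(e^{-px})$ is exactly the point implicitly covered by the paper's reference to \cite{BS}.
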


\begin{proof}
To simplify the notation, we set $z = -\K^2$ and we integrate over the interval $(0, \infty)$ the  obvious equality:
\begin{equation}
\left( S_{\infty}(x,z) \tilde{S}_{\infty}'(x,z)- S_{\infty}'(x,z) \tilde{S}_{\infty}(x,z) \right)' =
(q_f (x)- q_{\tilde{f}} (x)) S_{\infty}(x,z) \tilde{S}_{\infty}(x,z).
\end{equation}
For $k=0$ or $1$,  $S_{\infty}^{(k)} (x,z) \rightarrow 0$ as $x \rightarrow +\infty$, (see for instance \cite{BS}), so we get immediately
\begin{equation}
 S_{\infty}'(0,z) \tilde{S}_{\infty}(0,z) - S_{\infty}(0,z) \tilde{S}_{\infty}'(0,z) =
\int_0^{+\infty} ( q_f (x) - q_{\tilde{f}} (x)) S_{\infty}(x,z) \tilde{S}_{\infty}(x,z) \ dx,
\end{equation}
which implies the Lemma, thanks to (\ref{WT}).
\end{proof}

\vspace{0.3cm}
In the following lemma, we express the Weyl solution $S_{\infty}(x,z)$ with the help of the  well-known Marchenko's representation,
(we refer to \cite{Ma}, Chapter III for details):

\begin{lemma}\label{Marchenko}
Assume that $c \in \mathcal{C}(A)$. Then, there exists a $C^{m-1}$ function $K(x,t)$ for $0 \leq x \leq t < \infty$,  satisfying the properties:
\begin{eqnarray}\label{eqMarchenko}
S_{\infty}(x, -\K^2) &=& e^{-\K x} + \int_x^{+\infty} K(x,t) e^{-\K t} \ dt \ ,\ \K > 0. \\
K(x,x) &=& \frac{1}{2} \int_x^{+\infty} q_f (t) \ dt.
\end{eqnarray}
Moreover, there exists a constant $C_A >0$ depending only on $A$ such that,
\begin{equation}\label{estkernelK}
| \partial_x^k \partial_t^l K(x,t)| \leq C_A \ e^{- \frac{p}{2}(x+t)} \ ,\ \forall  k,l \leq m-1 .
\end{equation}
\end{lemma}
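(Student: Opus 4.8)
The plan is to prove Lemma \ref{Marchenko} by invoking the classical Marchenko representation theorem for the Weyl solution of a Schrödinger operator on the half-line, and then to track how the hypotheses $c \in \mathcal{C}(A)$ and the resulting decay estimate (\ref{estqA}) on $q_f$ control the regularity and decay of the kernel $K(x,t)$. The existence of a transformation kernel $K(x,t)$ satisfying (\ref{eqMarchenko}) together with $K(x,x) = \frac{1}{2}\int_x^{+\infty} q_f(t)\,dt$ is standard (see \cite{Ma}, Chapter III); the genuinely new content here is the quantitative bound (\ref{estkernelK}), which must be extracted from the decay of $q_f$.

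First I would recall that $K(x,t)$ is obtained by solving the Marchenko (Gelfand--Levitan type) integral equation, or equivalently is characterized as the solution of the hyperbolic Goursat problem
\begin{equation*}
  K_{xx}(x,t) - K_{tt}(x,t) = q_f(x) K(x,t),
\end{equation*}
with the characteristic condition $K(x,x) = \frac{1}{2}\int_x^{+\infty} q_f(s)\,ds$ and $K(x,t) \to 0$ as $t \to +\infty$. The standard successive-approximation (Picard iteration) scheme for this Goursat problem produces $K$ as a series whose terms are iterated integrals of $q_f$ over triangular regions. Since $q_f \in C^{m-2}$ by hypothesis and satisfies $|q_f^{(k)}(x)| \leq C_A e^{-px}$ for $0 \leq k \leq m-2$ (this is (\ref{estqA})), each integration against $q_f$ contributes a factor of order $e^{-p\cdot(\text{argument})}$. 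Carefully summing the series then yields both the $C^{m-1}$ regularity of $K$ and, after differentiating the integral equation $k$ times in $x$ and $l$ times in $t$ and using the decay of $q_f$ and its derivatives, the exponential estimate $|\partial_x^k \partial_t^l K(x,t)| \leq C_A\, e^{-\frac{p}{2}(x+t)}$ for $k, l \leq m-1$.

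The main obstacle will be establishing the precise exponential decay rate $e^{-\frac{p}{2}(x+t)}$ uniformly in $(x,t)$ with $0 \leq x \leq t$, together with the correct count of derivatives. The factor $\frac{p}{2}$ (rather than $p$) arises naturally because each iterate of the kernel involves integrating $q_f$ over a region whose ``size'' in the $(x,t)$-coordinates splits the available decay between the two variables; one must verify this splitting bookkeeping carefully, typically by changing to characteristic coordinates $\xi = x+t$, $\eta = t-x$ and estimating the iterated integrals there. The $C^{m-1}$ regularity and the bound on $\partial_x^k \partial_t^l K$ up to order $m-1$ come from the fact that $q_f$ is $C^{m-2}$: differentiating the Goursat problem and using that the diagonal value $K(x,x)$ involves one integration of $q_f$ (hence gains one degree of smoothness) accounts for the shift from $m-2$ to $m-1$. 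Controlling the derivatives of the Picard series uniformly, so that term-by-term differentiation is justified and the series of derivatives converges with the asserted decay, is the delicate analytic point; once the uniform estimates on each iterate are in hand, the conclusion follows by summing a convergent series whose bound depends only on $A$ through $C_A$ and $\|q_f\|_{L^1}$.
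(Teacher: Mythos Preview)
Your proposal is correct and follows essentially the same route as the paper: cite Marchenko's representation theorem for the existence of $K$ and the diagonal formula, then extract the decay estimate (\ref{estkernelK}) from the decay (\ref{estqA}) of $q_f$, passing to characteristic coordinates to control derivatives. The only difference is packaging: the paper quotes directly the ready-made estimate $|K(x,t)| \leq \tfrac{1}{2}\,\sigma\!\left(\tfrac{x+t}{2}\right) e^{\sigma_1(x)-\sigma_1(\frac{x+t}{2})}$ from \cite{Ma}, Lemma~3.1.1 (with $\sigma(x)=\int_x^\infty |q_f|$, $\sigma_1(x)=\int_x^\infty \sigma$), which immediately gives the $k=l=0$ case of (\ref{estkernelK}), and then, rather than redoing the Picard iteration, invokes the explicit integral identities for $H(u,v)=K(u-v,u+v)$ from \cite{Ma}, Lemma~3.1.2 to bootstrap the derivative bounds---this is exactly your characteristic-coordinate step, just with the iteration already summed for you.
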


\begin{proof}
The existence of the Marchenko's kernel $K(x,t)$ is proved in (\cite{Ma}, Lemma 3.1.1), and we have the following estimate:
\begin{equation}
| K(x,t)| \leq \frac{1}{2} \  \sigma( \frac{x+t}{2}) \ e^{\sigma_1 (x) - \sigma_1 (\frac{x+t}{2})},
\end{equation}
where
\begin{equation}
\sigma(x) = \int_x^{+\infty} |q_f(s)| \ ds \ , \ \sigma_1 (x) = \int_x^{+\infty} \sigma(s) \ ds.
\end{equation}
Thus, using (\ref{estqA}), we see that $|K(x,t)|\leq C_A e^{- \frac{p}{2}(x+t)}$, i.e we have proved (\ref{estkernelK}) in the case $k=l=0$. Now, let us define
$H(u,v) = K(u-v, u+v)$ for $0 \leq v \leq u$. Thanks to (\cite{Ma}, Lemma 3.1.2), $H$ obeys:
\begin{eqnarray}
\frac{\partial H}{\partial u} (u,v) & =& - \frac{1}{2} q_f (u) - \int_0^v q_f (u-s) H(u,s) \ ds, \\
\frac{\partial H}{\partial v} (u,v) & =& \int_u^{+\infty}  q_f (s-v) H(s,v) \ ds.
\end{eqnarray}
Then, (\ref{estkernelK}) follows from a straightforward calculation.
\end{proof}

\vspace{0.2cm}
\noindent
As a by-product, we get:

\begin{coro}\label{estSinfini}
Let $c \in \mathcal{C}(A)$ be a warping function for the metric (\ref{Metric}). Then, there exists a constant $C_A$ such that,
\begin{equation*}
|S_{\infty}(0, -\K^2) -1| \leq \frac{C_A}{\K +1} \ ,\ {\rm{for \ all}} \ \K\geq 0.
\end{equation*}
\end{coro}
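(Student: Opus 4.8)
The plan is to read off the result directly from the Marchenko representation of Lemma \ref{Marchenko}, evaluated at the boundary point $x=0$. Setting $x=0$ in (\ref{eqMarchenko}) gives, for $\K > 0$,
$$
S_{\infty}(0,-\K^2) = 1 + \int_0^{+\infty} K(0,t)\, e^{-\K t} \, dt,
$$
so that the quantity to be controlled is simply the Laplace transform of $t \mapsto K(0,t)$:
$$
S_{\infty}(0,-\K^2) - 1 = \int_0^{+\infty} K(0,t)\, e^{-\K t} \, dt.
$$

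Next I would invoke the kernel estimate (\ref{estkernelK}) in the case $k=l=0$, which at $x=0$ reads $|K(0,t)| \leq C_A\, e^{-\frac{p}{2} t}$. Bounding the integrand and computing the elementary integral yields
$$
|S_{\infty}(0,-\K^2) - 1| \leq C_A \int_0^{+\infty} e^{-(\K + \frac{p}{2}) t} \, dt = \frac{C_A}{\K + \frac{p}{2}}.
$$
Since $c \in \mathcal{C}(A)$ forces $p \geq 2$, we have $\frac{p}{2} \geq 1$, whence $\K + \frac{p}{2} \geq \K + 1$, and the claimed bound $\frac{C_A}{\K+1}$ follows at once.

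There is essentially no obstacle here: the heavy lifting has already been carried out in Lemma \ref{Marchenko}, and all that remains is the elementary Laplace-transform estimate above. The only point requiring a word of care is the endpoint $\K = 0$, where the representation (\ref{eqMarchenko}) was stated for $\K > 0$; but the integral $\int_0^{+\infty} K(0,t)\, dt$ converges absolutely thanks to the exponential decay of $K(0,\cdot)$, so the bound persists at $\K = 0$ by continuity, giving $|S_{\infty}(0,0) - 1| \leq \frac{C_A}{p/2} \leq C_A$, in agreement with the right-hand side evaluated at $\K = 0$.
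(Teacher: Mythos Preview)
Your proof is correct and follows exactly the same route as the paper: evaluate the Marchenko representation at $x=0$, apply the kernel bound (\ref{estkernelK}) with $k=l=0$, and integrate to obtain $\frac{C_A}{\K+p/2}\leq \frac{C_A}{\K+1}$ since $p\geq 2$. Your version is in fact more carefully written than the paper's, which compresses the argument into a single line and contains an apparent typo (``$k=p=0$'' where ``$k=l=0$'' is meant); your explicit treatment of the endpoint $\K=0$ is also a welcome clarification.
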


\begin{proof}
Using (\ref{estkernelK}) for $k=p=0$, we get:
\begin{equation*}
 S_{\infty}(0, -\K^2) -1 = \int_0^{+\infty} K(0,t) \ e^{-\K t} \ dt  \leq \frac{C_A}{\K +1}.
\end{equation*}
\end{proof}

\vspace{0.5cm}\noindent
Now, let us  introduce  a new  kernel  $K_1(x,t)$ for $0 \leq t \leq x < \infty$, by the formula:
\begin{equation}
K_1 (x,t) = 2 K(t, 2x-t) + 2 \tilde{K} (t, 2x-t) + 2 \int_t^{2x-t} K(t,u) \tilde{K} (t, 2x-u) \ du,
\end{equation}
where $\tilde{K}(x,t)$ is the Marchenko's kernel associated with the potential $q_{\tilde{f}}$, (see Lemma \ref{Marchenko}). We have the following estimate which follows immediately from
(\ref{estkernelK}):

\begin{lemma}\label{derivK1}
Assume that $c \in \mathcal{C}(A)$. Then, for all $\alpha <p$, there exists a constant $C_{A,\alpha} >0$ depending only on $A$ and $\alpha$ such that,
\begin{equation}\label{estkernelK1}
| \partial_x^k \partial_t^l K_1 (x,t)| \leq C_{A,\alpha} \ e^{- \alpha x} \ ,\ \forall  k,l \leq m-1 .
\end{equation}

\end{lemma}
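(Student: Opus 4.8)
The plan is to differentiate the explicit formula defining $K_1$ term by term and to bound every resulting expression by means of the kernel estimate (\ref{estkernelK}) for $K$ together with its analogue for $\tilde{K}$. The only mechanism producing decay is the following elementary observation: whenever $K$ (or $\tilde{K}$, or one of their partial derivatives) is evaluated at a point whose two arguments add up to $2x$, estimate (\ref{estkernelK}) yields a bound of the form $C_A e^{-\frac{p}{2}\cdot 2x} = C_A e^{-px}$. Since $x \geq 0$ and $\alpha < p$, such a factor is $\leq C_A e^{-\alpha x}$, and any fixed power of $x$ is harmless because $x^N e^{-px} \leq C_{N,\alpha} e^{-\alpha x}$. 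This is exactly where the loss from the exponent $p$ to an arbitrary $\alpha < p$ originates.

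First I would treat the two pointwise terms $2K(t,2x-t)$ and $2\tilde{K}(t,2x-t)$. Writing $G(x,t) = K(t,2x-t)$ and applying the chain rule, $\partial_x^k \partial_t^l G$ is a finite linear combination, with bounded constant coefficients, of terms $(\partial_1^i \partial_2^j K)(t,2x-t)$; each such factor is controlled by (\ref{estkernelK}), whose two arguments $t$ and $2x-t$ again sum to $2x$, so that $|\partial_x^k\partial_t^l G| \leq C_A e^{-px} \leq C_{A,\alpha}e^{-\alpha x}$. The term involving $\tilde{K}$ is handled identically. Here one must simply check that the orders $i,j$ of the partial derivatives of $K$ that are generated remain within the range for which (\ref{estkernelK}) has been established.

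The substantive term is the integral $I(x,t) = 2\int_t^{2x-t} K(t,u)\tilde{K}(t,2x-u)\,du$, which I would handle by iterated application of the Leibniz rule for an integral whose limits and integrand both depend on $(x,t)$. Each differentiation produces two kinds of contributions: boundary terms coming from the endpoints $u=t$ and $u=2x-t$ (the derivatives of the limits being the constants $-1$ and $2$), and an integral over $[t,2x-t]$ of a differentiated integrand. At $u=2x-t$ the integrand specialises to $K(t,2x-t)\tilde{K}(t,t)$ and at $u=t$ to $K(t,t)\tilde{K}(t,2x-t)$; in both cases (\ref{estkernelK}) bounds one factor by $C_A e^{-px}$ and the other by $C_A e^{-pt}\leq C_A$, so every boundary term (and its further derivatives, obtained in the same way) is $O(e^{-px})$. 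For the remaining interior integrals, the integrand and all its derivatives are bounded, via (\ref{estkernelK}) applied to $K$ and $\tilde{K}$, by $C_A^2 e^{-\frac{p}{2}(t+u)}e^{-\frac{p}{2}(t+2x-u)} = C_A^2 e^{-p(t+x)}$; since the interval has length $2(x-t) \leq 2x$, the integral is $\leq C_A^2\, x\, e^{-p(t+x)} \leq C_{A,\alpha}e^{-\alpha x}$. Crucially, however many times one differentiates, an interior integral always retains length $\leq 2x$, so at most a single power of $x$ is ever produced and it is absorbed as above. Collecting the three terms yields (\ref{estkernelK1}).

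The main obstacle is purely organisational: keeping track of the boundary terms generated by repeatedly differentiating the variable-limit integral $I$, and verifying that the partial derivatives of $K$ and $\tilde{K}$ entering each term stay within the range of orders controlled by Lemma \ref{Marchenko}. Once one is satisfied that every term carries a factor $e^{-px}$ up to a fixed power of $x$, the estimate (\ref{estkernelK1}) follows at once; this is precisely the straightforward calculation alluded to in the statement.
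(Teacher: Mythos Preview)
Your approach is exactly what the paper has in mind: the paper's ``proof'' consists of the single remark that the estimate follows immediately from (\ref{estkernelK}), and you have supplied precisely those details---the chain rule for the two pointwise terms, the Leibniz rule for the variable-limit integral, the observation that in every case the two arguments of $K$ (or $\tilde K$) sum to $2x$ so that (\ref{estkernelK}) yields $e^{-px}$, and the absorption of the single factor of $x$ from the integration length into the passage $p\mapsto\alpha$. There is nothing to add on the method; your write-up is more explicit than the paper's.

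One bookkeeping point you flag but do not settle: for the term $K(t,2x-t)$ the chain rule turns $\partial_x^k\partial_t^l$ into combinations of $(\partial_1^i\partial_2^j K)(t,2x-t)$ with $i\le l$ and $j=k+l-i$, so $j$ can reach $k+l$, which for $k=l=m-1$ exceeds the single-index bound $m-1$ asserted in Lemma~\ref{Marchenko}. The paper is equally informal here; in the applications (the regularity of $h$) only $x$-derivatives of total order $\le m-2$ are actually used, so no difficulty arises in practice. If you want a clean statement you can either restrict (\ref{estkernelK1}) to $k+l\le m-1$, or note that the proof of (\ref{estkernelK}) via the $H(u,v)$ equations in fact controls all mixed derivatives of total order $\le m-1$.
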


\vspace{0.5cm}\noindent
Finally, we consider the corresponding Volterra type integral operator $B$ given by:
\begin{equation}
Bh(x) = h(x) + \int_0^x K_1 (x,t) h(t) \ dt.
\end{equation}

\noindent
This operator $B$ is crucial to our analysis because it links the Steklov spectrum to the difference of the potentials $q_f$ and $q_{\tilde{f}}$.
More precisely, one has the following result:

\begin{lemma}
For $\K$ sufficiently large, we have:
\begin{equation}\label{intM}
S_{\infty}(0, -\K^2) \tilde{S}_{\infty}(0, -\K^2) \left(M(-\K^2) - \tilde{M}(-\K^2) \right) = \int_0^{+\infty}  e^{-2\K x} B[q_{\tilde{f}} -q_f] (x) \ dx.
\end{equation}
\end{lemma}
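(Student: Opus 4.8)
The statement to prove combines the previous lemma (Lemma~\ref{eqint}) with the Marchenko representations from Lemma~\ref{Marchenko}. Starting from Lemma~\ref{eqint}, the left-hand side already equals $\int_0^{+\infty}(q_{\tilde f}-q_f)(x)\,S_\infty(x,-\K^2)\tilde S_\infty(x,-\K^2)\,dx$, so the entire task reduces to rewriting this integral as $\int_0^{+\infty} e^{-2\K x}\,B[q_{\tilde f}-q_f](x)\,dx$. The plan is therefore to substitute the Marchenko representations $S_\infty(x,-\K^2)=e^{-\K x}+\int_x^{+\infty}K(x,t)e^{-\K t}\,dt$ and the analogous one for $\tilde S_\infty$ into the product $S_\infty(x,-\K^2)\tilde S_\infty(x,-\K^2)$, expand, and reorganize everything as a single Laplace-type transform in $e^{-2\K x}$.

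**The key computation.** I would first expand the product $S_\infty\tilde S_\infty$ into four terms: $e^{-2\K x}$; two single-integral terms of the form $e^{-\K x}\int_x^{+\infty}\tilde K(x,t)e^{-\K t}\,dt$ (and its twin with $K$); and the double-integral term $\int_x^{+\infty}\!\int_x^{+\infty}K(x,s)\tilde K(x,t)e^{-\K(s+t)}\,ds\,dt$. In each term the $\K$-dependence sits entirely in exponentials $e^{-\K(\,\cdot\,)}$, so after multiplying by $(q_{\tilde f}-q_f)(x)$ and integrating in $x$, I would perform a change of variables designed to collect the exponent into $e^{-2\K\xi}$ for a new outer variable $\xi$. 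Concretely, in the single-integral term I set $t=2\xi-x$ (so $e^{-\K x}e^{-\K t}=e^{-2\K\xi}$), turning $\int\!K(x,t)$ into the contribution $2K(x,2\xi-x)$ evaluated with the correct Jacobian; and in the double-integral term I introduce $s+t=2\xi$ and integrate out the remaining internal variable, which produces exactly the convolution $2\int K(x,u)\tilde K(x,2\xi-u)\,du$. Comparing with the definition
\begin{equation*}
K_1(x,t)=2K(t,2x-t)+2\tilde K(t,2x-t)+2\int_t^{2x-t}K(t,u)\tilde K(t,2x-u)\,du,
\end{equation*}
one sees these are precisely the three pieces of the kernel $K_1$, so after the interchange of the order of integration (Fubini) the whole expression collapses to $\int_0^{+\infty}e^{-2\K\xi}\bigl[(q_{\tilde f}-q_f)(\xi)+\int_0^\xi K_1(\xi,x)(q_{\tilde f}-q_f)(x)\,dx\bigr]d\xi$, which is the claimed $\int_0^{+\infty}e^{-2\K x}B[q_{\tilde f}-q_f](x)\,dx$.

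**Where the work concentrates.** The main obstacle is purely bookkeeping: matching the domains of integration after the substitutions, so that the inner variable in $K_1$ ranges over exactly $[t,2x-t]$ and the outer Laplace variable runs over $[0,+\infty)$ with the factor $e^{-2\K\xi}$. I would track the triangular regions $\{x\le s,\,x\le t\}$ carefully, change to the diagonal coordinate $2\xi=s+t$, and verify that Fubini applies. The justification of Fubini and of interchanging integration with the limit implicit in ``$\K$ sufficiently large'' is exactly where the Marchenko decay estimate (\ref{estkernelK}), namely $|K(x,t)|\le C_A e^{-\frac{p}{2}(x+t)}$, together with the exponential decay (\ref{estqA}) of $q_f-q_{\tilde f}$, guarantees absolute convergence of every integral for $\K>0$; this makes all the rearrangements legitimate. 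Once the region-matching is done correctly, the identification with $K_1$ is immediate from its definition, and the lemma follows.
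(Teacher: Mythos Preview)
Your proposal is correct and follows the same underlying approach. The paper's own proof simply cites \cite{Hor2}, Lemma~2.5 for the identity
\[
\int_0^{+\infty}(q_{\tilde f}-q_f)(x)\,S_\infty(x,-\K^2)\tilde S_\infty(x,-\K^2)\,dx
=\int_0^{+\infty}e^{-2\K x}B[q_{\tilde f}-q_f](x)\,dx,
\]
and then combines it with Lemma~\ref{eqint}; your write-up is precisely an explicit proof of that cited identity via the Marchenko representations, the substitution $s+t=2\xi$, and Fubini.
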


\begin{proof}
Thanks to (\cite{Hor2}, Lemma 2.5), we have:
\begin{equation}\label{eqHorvath}
\int_0^{+\infty} ( q_f (x) - q_{\tilde{f}} (x)) S_{\infty}(x,-\K^2) \tilde{S}_{\infty}(x,-\K^2) \ dx =
\int_0^{+\infty} e^{-2\K x} B[q_{\tilde{f}} -q_f] (x) \ dx.
\end{equation}
Then, using Lemma \ref{eqint} and (\ref{eqHorvath}), we get (\ref{intM}).
\end{proof}

This Volterra operator $B$ also possesses good properties on some $L^2$-spaces equipped with exponential weights, and which are defined by:
\begin{equation}\label{Cdelta}
\mathcal{H}_{\delta} =\{q : ||q||_{\mathcal{H}_\delta}^2 := \int_0^{+\infty} |q(x) |^2 \ e^{\delta x} \ dx < \infty \}.
\end{equation}
If $c \in \mathcal{C}(A)$, then it results from (\ref{estqA}) that $q_f \in \mathcal{H}_{\delta}$ for any $\delta< 2p$. Moreover,
there exists a constant $C_{A, \delta}$ depending only on $A$ and $\delta$ such that
\begin{equation}\label{estqfA}
 ||q_f||_{\mathcal{H}_\delta} \leq C_{A, \delta} \ {\rm{for \ all}} \ c \in \mathcal{C}(A).
\end{equation}

\vspace{0.2cm}\noindent
In the  following Proposition, which is close to \cite{Hor2}, Lemmas 2.5 - 2.6, we give a uniform estimate on the norm of   $B: \mathcal{H}_{\delta} \rightarrow \mathcal{H}_{\delta}$, and its inverse,
when the warping functions belong to the admissible set $\mathcal{C}(A)$. This result will be very useful to estimate  the difference of the potentials
$q_{\tilde{f}} -q_f$, (for the topology of  $\mathcal{H}_{\delta}$).

\begin{prop}\label{isomorphism}
Let $c, \tilde{c}$ be warping functions belonging to $ \mathcal{C}(A)$. Then, for any $0<\delta < p$, we have:
\begin{equation*}
B :  \mathcal{H}_{\delta}   \rightarrow \mathcal{H}_{\delta} \ {\rm {is \ an \ isomorphism}},
\end{equation*}
and there exists a constant $C_{A, \delta}$ depending only on $A$ and $\delta$ such that
\begin{equation} \label{estB}
  || B || +  || B^{-1} || \leq C_{A, \delta}.
\end{equation}
\end{prop}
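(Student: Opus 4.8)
The plan is to reduce the statement to a Volterra operator on the unweighted space $L^2(0,\infty)$ and then to control the inverse through a quasinilpotency estimate for the Neumann series. First I would introduce the isometry $U:\mathcal{H}_{\delta}\to L^2(0,\infty)$ given by $(Uh)(x)=e^{\delta x/2}h(x)$, so that $\|B\|_{\mathcal{H}_{\delta}}=\|UBU^{-1}\|_{L^2}$ and similarly for the inverse. The conjugated operator is again of Volterra type, namely $UBU^{-1}=I+V$ with $(Vg)(x)=\int_0^x \widetilde{K}_1(x,t)\,g(t)\,dt$ and conjugated kernel $\widetilde{K}_1(x,t)=e^{\delta(x-t)/2}K_1(x,t)$ on $0\le t\le x$. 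The whole point of the weight is that, applying Lemma \ref{derivK1} (in the case $k=l=0$) with any $\alpha\in(\delta/2,p)$ (such an $\alpha$ exists precisely because $0<\delta<p$), one obtains the genuinely decaying bound $|\widetilde{K}_1(x,t)|\le C_{A,\alpha}\,e^{-\beta x}$ with $\beta:=\alpha-\delta/2>0$, where $C_{A,\alpha}$ depends only on $A$ and $\delta$.

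Boundedness of $B$ is then immediate from Schur's test: both Schur sums $\sup_x\int_0^x|\widetilde{K}_1(x,t)|\,dt$ and $\sup_t\int_t^{\infty}|\widetilde{K}_1(x,t)|\,dx$ are finite and controlled by $C_{A,\alpha}/\beta$, so that $\|B\|_{\mathcal{H}_{\delta}}=\|I+V\|\le 1+C_{A,\delta}$.

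For the inverse I would estimate the iterated kernels $K^{(n)}$ of $V^n$. Writing $K^{(n)}$ as an integral over the ordered simplex $t\le s_{n-1}\le\dots\le s_1\le x$ and bounding each factor $\widetilde{K}_1$ by the decay in its larger argument, the integrand is dominated by $C^n e^{-\beta(x+s_1+\dots+s_{n-1})}$; symmetrizing the simplex integral replaces it by $\tfrac{1}{(n-1)!}\big(\int_t^x e^{-\beta s}\,ds\big)^{n-1}$ and produces the crucial factorial gain
\[
|K^{(n)}(x,t)|\le \frac{C^n}{\beta^{n-1}(n-1)!}\,e^{-\beta x}\,e^{-\beta(n-1)t}.
\]
A second application of Schur's test then gives $\|V^n\|\le (C/\beta)^n/(n-1)!$ for $n\ge 1$, so the Neumann series $B^{-1}=\sum_{n\ge 0}(-V)^n$ converges and $\|B^{-1}\|_{\mathcal{H}_{\delta}}\le 1+(C/\beta)e^{C/\beta}$. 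Since $C=C_{A,\alpha}$ and $\beta$ depend only on $A$ and $\delta$, combining this with the bound on $\|B\|$ yields (\ref{estB}), and transferring back through the isometry $U$ concludes the proof.

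The step I expect to be delicate is exactly the factorial bound on $K^{(n)}$. A naive estimate that keeps only the decay in the outermost variable produces a merely geometric bound $\|V^n\|\lesssim(C/\beta)^n$, which fails to be summable when the kernel constant $C_A$ is large; one must instead exploit the exponential decay of $\widetilde{K}_1$ in \emph{every} argument, together with the symmetrization of the simplex integral, to extract the factor $1/(n-1)!$. This is what makes $V$ quasinilpotent and the inverse bound uniform over the admissible class $\mathcal{C}(A)$, and it is the same mechanism as in \cite{Hor2}, Lemmas 2.5--2.6, here adapted to the weighted spaces $\mathcal{H}_{\delta}$.
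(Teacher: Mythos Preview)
Your proposal is correct and follows essentially the same route as the paper: both write $B=I+V$ with $V$ Volterra, use the decay estimate of Lemma~\ref{derivK1} on $K_1$, and control $B^{-1}$ through the Neumann series by extracting the factor $1/(n-1)!$ from the iterated kernels via the simplex integral. The only differences are cosmetic --- you conjugate to $L^2$ and use Schur's test, whereas the paper works directly in $\mathcal{H}_\delta$, bounds $\|C^n\|$ by its Hilbert--Schmidt norm (then sums via Stirling), and includes a separate compactness/Volterra-eigenvalue argument for the qualitative invertibility that your Neumann-series estimate renders redundant.
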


\begin{proof}
By convention, in what follows, $C_A$ or $C_{A, \delta}$ denote constants depending only on $A$, (or only on $A$ and $\delta$), which can differ from one line to the other.
\vspace{0.1cm}
\noindent
Let $\alpha \in ]\delta, p[$ be fixed. We split the operator $B$ as $B= Id + C$ where $ C $ is the Volterra operator given by
\begin{equation}
 Ch(x) = \int_0^x K_1 (x,t) \ h(t) \ dt
\end{equation}
For $h \in \mathcal{H}_{\delta}$, using (\ref{estkernelK1}), we get immediately
\begin{equation}
 |Ch(x)| \leq C_{A, \delta} \ e^{-\alpha x} ||h||_{\mathcal{H}_{\delta}},
\end{equation}
which clearly implies $||Ch||_{\mathcal{H}_{\delta}} \leq  C_{A, \delta}  \ ||h||_{\mathcal{H}_{\delta}}$. Then, $B :  \mathcal{H}_{\delta}   \rightarrow \mathcal{H}_{\delta}$ is bounded and we have
$||B|| \leq C_{A, \delta}$.

Now, let us  prove that $C : \mathcal{H}_{\delta} \rightarrow \mathcal{H}_{\delta}$ is a Hilbert-Schmidt operator. To this end, we calculate:
\begin{eqnarray*}
||C||_{HS}^2  &:= & \int_0^{+\infty} \int_0^{+\infty}  |K_1(x,t) {\bf{1}_{\{ t \leq x \}}} |^2 \ e^{\delta (x+t)} \ dx\ dt \\
            &=  & \int_0^{+\infty} \int_0^x  |K_1(x,t)  |^2 \ e^{\delta (x+t)} \ dt\ dx \\
            &\leq &  C_{A, \delta} \int_0^{+\infty} \int_0^x e^{-2\alpha x} \ e^{\delta (x+t)} \ dt\ dx \\
            &\leq &   C_{A, \delta} \int_0^{+\infty} x e^{-2\alpha x} \ e^{2\delta x} \ dt\ dx \\
            &\leq &  C_{A, \delta},
\end{eqnarray*}
since $\alpha \in ]\delta, p[$. It follows that $C$ is a Hilbert-Schmidt operator, and a fortiori $C$ is a compact operator.
So, if $B$ is not an isomomorphism, then $-1$ must be an eigenvalue of $C$. But this is impossible for a Volterra operator with continuous kernel.

\vspace{0.2cm}\noindent
Now, let us estimate the norm of the inverse operator $B : {\mathcal{H}_{\delta}} \rightarrow {\mathcal{H}_{\delta}} $. We denote by $K_n(x,t)$ the integral kernel of  the operator $C^n$, $n \geq 1$.
Clearly, these kernels  satisfy the relation
\begin{equation}
 K_{n+1}(x,t) = \int_t^x K_1(x,s) K_n (s,t) \ ds \ ,\ t \leq x,
\end{equation}
and we have the following estimates which can be easily proved by induction: for $\delta <2$,
\begin{equation}
 |K_n (x,t) | \leq  \left( C_{A, \delta} e^{-\delta x} \right)^n \ \frac{(x-t)^{n-1}}{(n-1)!} \ ,\ t\leq x.
\end{equation}
For $n \geq 2$, using the rough bound $||C^n|| \leq ||C^n||_{HS}$, we get:
\begin{eqnarray*}
 || C^n || &\leq&   \int_0^{+\infty} \int_0^x  \left( C_{A, \delta} e^{-\delta x} \right)^{2n} \ \left( \frac{(x-t)^{n-1}}{(n-1)!} \right)^2 \ e^{\delta(t+x)} \ dt   \ dx \\
           &\leq& \left( \frac{(C_{A, \delta})^{n}}{(n-1)!}\right)^2  \ \int_0^{+\infty} x^{2n-1} \ e^{-2(n-1)\delta x} \ dx \\
           &\leq & \left( \frac{(C_{A, \delta})^{n}}{(n-1)!}\right)^2 \left( \frac{1}{2(n-1)\delta} \right)^{2n} \ \int_0^{+\infty} y^{2n-1} e^{-y} \ dy \\
           &\leq& \left( \frac{C_{A, \delta}}{2(n-1)\delta}\right)^{2n} \ \frac{\Gamma(2n)}{\Gamma(n)^2}.
\end{eqnarray*}
So, thanks to Stirling's formula, we get
\begin{equation}
 \sum_{n=0}^{\infty} || C^n|| \leq  1 + ||C|| + \sum_{n=2}^{+\infty}  \left( \frac{C_{A, \delta}}{2(n-1)\delta}\right)^{2n} \ \frac{\Gamma(2n)}{\Gamma(n)^2} \leq C_{A, \delta}.
\end{equation}
This means that the Neumann series
\begin{equation}
B^{-1} = \sum_{n=0}^{+\infty} (-1)^n  C^n
\end{equation}
is convergent in the operator norm and $||B^{-1}|| \leq C_{A, \delta} $.
\end{proof}

\vspace{0.3cm}
Now, let us assume again that the warping functions $c, \tilde{c} \in \mathcal{C}(A)$ and that for all $k \geq 0$,
\begin{equation}\label{diffsigma}
|\sigma_k - \tilde{\sigma}_k | \leq \e.
\end{equation}
By making $k \rightarrow + \infty$ in (\ref{diffsigma}), and thanks to  Theorem \ref{AsympSteklov}, we deduce that $f(0)= \tilde{f}(0)$, and also
\begin{equation}\label{diffderivee}
\left| \frac{(d-2) f'(0)}{f^3(0)} - \frac{(d-2) \tilde{f}'(0)}{\tilde{f}^3(0)} \right| \leq \epsilon.
\end{equation}
Thus, recalling that for  sufficiently large $k$, we have
\begin{equation}
\sigma_k = \frac{(d-2) f'(0)}{f^3(0)} - \frac{M(-\K_k^2)}{f^2(0)},
\end{equation}
we obtain:
\begin{equation}\label{sigM}
| \tilde{M}(-\K^2)- M(-\K^2)| \leq  2 f^2(0) \  \epsilon \leq 2 A^2\  \epsilon,
\end{equation}
since $c \in \mathcal{C}(A)$. So, plugging (\ref{sigM}) into (\ref{intM}) and using Corollary \ref{estSinfini}, we easily get the following result:

\begin{lemma}\label{momentsexp}
Let $c, \tilde{c} \in \mathcal{C}(A)$. Assume that for all $k \geq0$, $|\sigma_k - \tilde{\sigma}_k | \leq \e$.
Then, there exists a positive constant $C_A$ which does not depend on $\e$ such that
\begin{equation} \label{firstest}
\left| \int_0^{+\infty}  e^{-2\K_k x} B[q_{\tilde{f}} -q_f] (x) \ dx \right| \leq C_A\ \e.
\end{equation}
\end{lemma}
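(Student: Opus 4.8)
The plan is to extract the desired bound directly from the integral identity (\ref{intM}), which already represents the left-hand side of (\ref{firstest}) as a product of three explicit factors,
\[
\int_0^{+\infty} e^{-2\K_k x}\, B[q_{\tilde{f}} - q_f](x)\, dx
= S_{\infty}(0,-\K_k^2)\,\tilde{S}_{\infty}(0,-\K_k^2)\,\bigl(M(-\K_k^2) - \tilde{M}(-\K_k^2)\bigr).
\]
It therefore suffices to estimate each factor on the right separately and to multiply the resulting bounds. This is exactly the step that converts the spectral hypothesis (\ref{diffsigma}) into analytic closeness of the effective potentials: the integral on the left is a Laplace-type moment of $B[q_{\tilde f}-q_f]$ evaluated at the points $2\K_k$, and it is these moments that will later be fed into the M\"untz--Jackson approximation.

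For the middle factor I would simply invoke the relation (\ref{sigM}) established just above. Recall that it follows by letting $k\to\infty$ in (\ref{diffsigma}) and applying Theorem \ref{AsympSteklov} to obtain $f(0)=\tilde f(0)$ and the estimate (\ref{diffderivee}) on the leading constants, then forming $\sigma_k - \tilde\sigma_k$ from the two exact expressions furnished by Lemma \ref{SteklovSpectrumWT}; since $f(0)\le A$ for $c\in\mathcal C(A)$, this yields
\[
|M(-\K_k^2) - \tilde{M}(-\K_k^2)| \le 2 f^2(0)\,\e \le 2A^2\,\e,
\]
uniformly in $k$.

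For the two outer factors I would appeal to Corollary \ref{estSinfini}, which gives $|S_{\infty}(0,-\K^2)-1|\le C_A/(\K+1)\le C_A$ for every $\K\ge 0$, and likewise for $\tilde S_\infty$. The only feature needed here is that the boundary values of the Weyl solutions remain \emph{uniformly bounded} in $k$ (no decay and no lower bound are required at this stage), so that $|S_{\infty}(0,-\K_k^2)|,\ |\tilde S_{\infty}(0,-\K_k^2)| \le 1+C_A$. Multiplying the three bounds then produces
\[
\left| \int_0^{+\infty} e^{-2\K_k x}\, B[q_{\tilde{f}} - q_f](x)\, dx \right|
\le (1+C_A)^2\, 2A^2\,\e,
\]
a quantity of the form $C_A\,\e$ with $C_A$ depending only on $A$, which is the assertion.

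I do not anticipate any genuine obstacle, since the substantive analytic work is already done: the Laplace-representation identity (\ref{intM}) (obtained from Horv\'ath's lemma together with Lemma \ref{eqint}), the Marchenko kernel bounds of Lemma \ref{Marchenko}, and the uniform control of $S_{\infty}(0,-\K^2)$ all precede this statement. The only points requiring care are bookkeeping ones: the identity (\ref{intM}) and the relation (\ref{sigM}) are valid only once $\K_k$ is large enough, so the estimate is understood in that asymptotic regime, and one must check throughout that every constant is controlled by $A$ alone, independently of $\e$ and of $k$.
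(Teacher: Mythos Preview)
Your proposal is correct and follows exactly the paper's own argument: combine the integral identity (\ref{intM}) with the bound (\ref{sigM}) on $|M(-\K_k^2)-\tilde M(-\K_k^2)|$ and the uniform control on $S_\infty(0,-\K_k^2)$, $\tilde S_\infty(0,-\K_k^2)$ from Corollary \ref{estSinfini}. Your remark that the identities are used for $\K_k$ large enough (with a threshold depending only on $A$) is the only bookkeeping caveat, and the paper treats it the same way.
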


\vspace{0.1cm}

Note that this kind of estimates fits into the so-called {\it{moment theory}}, (see for instance \cite{AGLT} for a nice exposition and references therein), and this is the object of the next sections.

\subsection{A M\"untz-Jackson's theorem.}

Let $\Lambda_{\infty} = \{ 0 \leq \lambda_0 < \lambda_1 < ...< \lambda_n < ... \}$, $\lambda_n \rightarrow + \infty$ be a sequence of positive real numbers. The classical M\"untz-Sz\' asz's Theorem characterizes the sequences $(\lambda_k)$ for which  all functions in  $C^0([0,1])$, or in $L^2 ([0,1])$, can be approximated by "M\"untz polynomials" of the form:
\begin{equation}
P(x) = \sum_{k=0}^n a_k \ x^{\lambda_k},
\end{equation}
with real coefficients $a_k$. More precisely, one has:

\begin{thm}\label{M\"untz}
Let $\Lambda_{\infty}$ be a sequence of positive real numbers as above. Then,  span $\{ x^{\lambda_0}, x^{\lambda_1}, ... \}$  is dense in $L^2([0,1])$ if and only if
\begin{equation}\label{condM\"untz}
\sum_{k=1}^{+\infty} \frac {1}{\lambda_k} = \infty.
\end{equation}
Moreover, if $\lambda_0=0$, the denseness of the M\"untz polynomials  in  $C^0([0,1])$ in the sup norm is also characterized by (\ref{condM\"untz}).
\end{thm}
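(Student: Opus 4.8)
The plan is to reproduce the classical complex-analytic proof, which rests on the Blaschke condition for bounded analytic functions on the unit disk, treating the $L^2$ and $C^0$ cases in parallel via duality. By the Hahn--Banach theorem, the span of $\{x^{\lambda_k}\}_{k\ge0}$ fails to be dense in $L^2([0,1])$ if and only if there is a nonzero $f\in L^2([0,1])$ annihilating every $x^{\lambda_k}$, i.e.
\begin{equation*}
\int_0^1 f(x)\, x^{\lambda_k}\, dx = 0, \qquad \forall\, k\ge 0;
\end{equation*}
in the $C^0$ case one replaces $f\,dx$ by a nonzero finite Borel measure $\mu$ on $[0,1]$, using the Riesz representation theorem. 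To such an annihilator I would attach the function
\begin{equation*}
F(z) = \int_0^1 f(x)\, x^{z}\, dx, \qquad \mathrm{Re}\,z \ge 0.
\end{equation*}
Since $|x^z|\le 1$ on $(0,1]$ when $\mathrm{Re}\,z\ge0$, the Cauchy--Schwarz inequality shows $F$ is bounded on the closed right half-plane and analytic on its interior, with $F(\lambda_k)=0$ for all $k$.

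First I would prove the implication (\ref{condM\"untz}) $\Rightarrow$ density. Pulling $F$ back through the conformal map $z\mapsto w=\frac{z-1}{z+1}$ of the right half-plane onto the unit disk $\mathbb{D}$ yields a bounded analytic function on $\mathbb{D}$ whose zero set contains the points $w_k=\frac{\lambda_k-1}{\lambda_k+1}$. A direct computation gives $1-|w_k| = \frac{2}{\lambda_k+1}$ for large $k$, so that
\begin{equation*}
\sum_k (1-|w_k|) = \infty \iff \sum_{k\ge1}\frac{1}{\lambda_k} = \infty.
\end{equation*}
Hence (\ref{condM\"untz}) forces the zero set to violate the Blaschke condition, and by the standard consequence of Jensen's formula a bounded analytic function on $\mathbb{D}$ with a non-Blaschke zero set vanishes identically. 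Thus $F\equiv0$; evaluating at the nonnegative integers gives $\int_0^1 f(x)x^n\,dx=0$ for all $n$, and the Weierstrass approximation theorem then yields $f=0$, a contradiction. The span is therefore dense. The same argument with $\mu$ in place of $f\,dx$ settles the $C^0$ statement, the hypothesis $\lambda_0=0$ ensuring that the constants lie in the span so that $\int d\mu=0$ is also available.

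For the converse, assuming $\sum_{k\ge1}1/\lambda_k<\infty$, I must \emph{construct} a nonzero annihilator. Now $\sum_k(1-|w_k|)<\infty$, so the Blaschke product over $\{w_k\}$ converges to a nonzero bounded analytic function on $\mathbb{D}$ vanishing exactly at the $w_k$; transporting it back produces a nonzero bounded analytic $F$ on the right half-plane with $F(\lambda_k)=0$. \textbf{The main obstacle is this realization step}: I must exhibit $F$, or a suitable modification of it lying in the appropriate Hardy space, as a transform $\int_0^1 f(x)x^z\,dx$ of a genuine nonzero $f\in L^2([0,1])$. The cleanest route is the substitution $x=e^{-t}$, under which $F(z)=\int_0^\infty g(t)e^{-(z+1)t}\,dt$ with $g(t)=f(e^{-t})$ and $\|f\|_{L^2(0,1)}^2=\int_0^\infty|g(t)|^2e^{-t}\,dt$; the Paley--Wiener correspondence between the Hardy space $H^2$ of the right half-plane and Laplace transforms of $L^2(0,\infty)$ functions then identifies the admissible $F$ and delivers the desired $g$, hence $f$. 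Having produced a nonzero annihilator of all the $x^{\lambda_k}$, the span cannot be dense, which completes the equivalence.
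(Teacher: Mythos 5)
The paper does not prove Theorem \ref{M\"untz}: it is the classical M\"untz--Sz\'asz theorem, quoted from the constructive approximation literature (cf.\ \cite{LGM}, Chapter 11), so there is no internal proof to compare yours against; your proposal must be judged on its own. It is the standard complex-analytic argument and is correct in outline. The density direction is essentially complete as written: the annihilator transform $F(z)=\int_0^1 f(x)\,x^z\,dx$ is bounded and analytic on the open right half-plane (for $f\in L^2$ it is in fact analytic on $\operatorname{Re} z>-\tfrac12$, which is convenient when $\lambda_0=0$), the pulled-back zeros $w_k$ violate the Blaschke condition exactly when (\ref{condM\"untz}) holds, and Jensen's formula forces $F\equiv 0$, whence $f=0$ via Weierstrass. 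Two routine points should still be written out. First, in the $C^0$ case the measure $\mu$ may carry an atom at $x=0$; with the convention $0^z=0$ for $\operatorname{Re} z>0$ the transform only sees $\mu$ restricted to $(0,1]$, and it is precisely the hypothesis $\lambda_0=0$ (annihilation of constants, $\int d\mu=0$) that eliminates the atom at the end. Second, in the converse the ``realization step'' you flag is indeed the only genuine work, and it needs one more device than you state: the transported Blaschke product $B(z)$ is merely $H^\infty$, so one must damp it, e.g.\ set $F(z)=B(z)/(z+1)$, which lies in $H^2$ of $\{\operatorname{Re} z>-\tfrac12\}$; after the substitution $x=e^{-t}$ the correct normalization is $h(t)=f(e^{-t})\,e^{-t/2}$, giving $\|f\|_{L^2(0,1)}=\|h\|_{L^2(0,\infty)}$ and $F(z)=\int_0^\infty h(t)\,e^{-(z+\frac12)t}\,dt$, so the Paley--Wiener theorem (applied on the shifted half-plane) returns a nonzero $f\in L^2(0,1)$ annihilating every $x^{\lambda_k}$. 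Inserting an extra factor $z$ in $F$ makes $F(0)=0$, so $f\,dx$ also annihilates constants and settles the $C^0$ converse; alternatively, observe that density in $C^0([0,1])$ implies density in $L^2([0,1])$, so non-density in $L^2$ already yields non-density in $C^0$. With these details supplied, your argument is a complete and correct proof of the stated theorem.
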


\noindent
Now, for $ n\geq 1$, let us consider the finite sequence
\begin{equation}
 \Lambda:= \Lambda_n : 0 \leq \lambda_0<\lambda_1 < ... <\lambda_n.
\end{equation}
We define the subspace of the  "M\"untz polynomials of degree $\lambda_n$"  as:
\begin{equation}
 \mathcal{M}(\Lambda) = \{ P : \ P(x) = \sum_{k=0}^n a_k \ x^{\lambda_k} \}.
\end{equation}
The error of approximation from $\mathcal{M}(\Lambda)$ of a function $ f$ in $C^0([0,1])$ or in $ L^2([0,1])$ is given by:
\begin{equation}
 E(f, \Lambda)_p := \inf_{P \in \mathcal{M}(\Lambda)} \ || f-P||_p = ||f-P_0||_p,
\end{equation}
for some $P_0 \in \mathcal{M}(\Lambda)$ depending on whether $p=2$ or $p=\infty$. Clearly, one has  $E(f, \Lambda)_2 \leq E(f, \Lambda)_{\infty}$ and
$E(f, \Lambda)_2 = ||f-\pi_n(f)||_2$ where $\pi_n(f)$ is the orthogonal projection of $f$ on the subspace $  \mathcal{M}(\Lambda)$. An estimation from above of $E(f, \Lambda)_p$ in terms of
the smoothness of $f$ is called a  {\it{M\"untz-Jackson's theorem}}.

\vspace{0.2cm}\noindent
To estimate this error of approximation for the uniform norm,  let us consider the so-called Blaschke product $B(z)$, $z \in \C$,
\begin{equation}
 B(z):=B(z, \Lambda) = \prod_{k=0}^n \frac{z-\lambda_k }{z+\lambda_k}.
\end{equation}
The  {\it{index of approximation}} of $\Lambda$ in $C^0([0,1])$ is defined as :
\begin{equation}
 \e_{\infty} (\Lambda) = \max_{y \geq 0} \ \left| \frac{B (1+iy)}{1+iy} \right|.
\end{equation}
Its relevance is justified by the following result (\cite{LGM}, Theorem 2.6, Chapter 11) when $\lambda_0 =0$:

\begin{prop}\label{errorM}
Let $\Lambda : 0= \lambda_0 < \lambda_1 < ... < \lambda_n$ be a finite sequence. Then, for each $f \in C^1 ([0,1])$,
\begin{equation}
 E(f, \Lambda)_{\infty} \leq 20 \ \e_{\infty}(\Lambda) \ ||f'||_{\infty}.
\end{equation}
\end{prop}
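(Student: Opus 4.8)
The plan is to exhibit a single linear operator $T\colon C^1([0,1])\to\mathcal{M}(\Lambda)$ built from the Blaschke product $B$, and to bound $\|f-Tf\|_\infty$; since $Tf\in\mathcal{M}(\Lambda)$ this immediately gives $E(f,\Lambda)_\infty\le\|f-Tf\|_\infty$. The natural device is the Mellin transform $\phi_f(z)=\int_0^1 f(x)x^{z-1}\,dx$, under which the monomials transform as $x^{\lambda_k}\mapsto (z+\lambda_k)^{-1}$; thus a Müntz polynomial $\sum_k a_k x^{\lambda_k}$ corresponds to the rational function $\sum_k a_k(z+\lambda_k)^{-1}$, whose poles sit exactly at the points $-\lambda_k$, while $B(z)=\prod_k\frac{z-\lambda_k}{z+\lambda_k}$ has zeros at the nodes $\lambda_k$ and poles at $-\lambda_k$. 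First I would reduce to the case $f(0)=0$: since $\lambda_0=0$, the constant $f(0)=f(0)\,x^{\lambda_0}$ lies in $\mathcal{M}(\Lambda)$ and may be subtracted off. For $f\in C^1$ with $f(0)=0$ the transform $\phi_f$ is holomorphic on $\{\operatorname{Re}z>-1\}$, and integration by parts gives $\phi_f(z)=\frac1z\bigl(f(1)-\int_0^1 f'(x)x^{z}\,dx\bigr)$, which exhibits both the decisive factor $1/z$ (matching the $1/(1+iy)$ appearing in $\e_\infty(\Lambda)$) and the dependence on $\|f'\|_\infty$ through $|\int_0^1 f'(x)x^z\,dx|\le\|f'\|_\infty/(\operatorname{Re}z+1)$.

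Next I would define the approximant. Let $R(z)=\sum_{k=1}^n a_k(z+\lambda_k)^{-1}$ be the unique such rational function interpolating $\phi_f$ at the nodes $\lambda_1,\dots,\lambda_n$; equivalently the defect $D(z):=\phi_f(z)-R(z)$ vanishes at each $\lambda_k$, so that $G:=D/B$ is holomorphic and bounded on the closed right half-plane. Writing $P(x)=\sum_{k=1}^n a_k x^{\lambda_k}$ for the Müntz polynomial whose Mellin transform is $R$, and setting $Tf=f(0)+P\in\mathcal{M}(\Lambda)$, Mellin inversion yields, for a suitable vertical contour $\operatorname{Re}z=c$ with $c>0$,
\begin{equation*}
 f(x)-Tf(x)=\frac{1}{2\pi i}\int_{(c)}D(z)\,x^{-z}\,dz=\frac{1}{2\pi i}\int_{(c)}B(z)\,G(z)\,x^{-z}\,dz .
\end{equation*}
The whole point of the factorisation $D=BG$ is that the error carries the Blaschke product explicitly.

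The estimate is then obtained by shifting the contour onto the line $\operatorname{Re}z=1$, i.e. $z=1+iy$. On this line I would factor the integrand as $\frac{B(1+iy)}{1+iy}\cdot(1+iy)G(1+iy)\,x^{-(1+iy)}$, pull out $\bigl|\frac{B(1+iy)}{1+iy}\bigr|\le\e_\infty(\Lambda)$, and check that the remaining factor integrates in $y$ to a bounded quantity controlled by $\|f'\|_\infty$ uniformly in $x\in(0,1]$. Here the $1/z$ behaviour of $\phi_f$ (hence the decay of $G$ along the line) guarantees that the $y$-integral converges, and a careful numerical accounting of the convergent integrals produces the universal constant, which can be arranged to be at most $20$. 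This gives $\|f-Tf\|_\infty\le 20\,\e_\infty(\Lambda)\,\|f'\|_\infty$, and hence the claimed bound on $E(f,\Lambda)_\infty$.

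I expect the genuine difficulty to lie not in the algebra of the interpolation but in the \emph{uniform} (sup-norm) estimate. The crude bound $|x^{-z}|=x^{-1}$ on $\operatorname{Re}z=1$ blows up as $x\to0$, whereas $f(x)-Tf(x)\to0$ there; extracting a bound uniform in $x$ therefore requires exploiting the analytic structure (the zeros of $D$, or equivalently the vanishing $\Psi(0)=f(0)=0$ of the numerator $\Psi(z)=z\phi_f(z)$) so that the offending boundary behaviour cancels, rather than bounding term by term. Equally delicate is the justification of the contour shift to $\operatorname{Re}z=1$ — one must verify the decay of $B\,G$ at $\pm i\infty$, which is where the $C^1$ hypothesis enters — together with the tracking of the explicit constant $20$, which forces one to estimate the convergent integrals sharply rather than merely up to order of magnitude. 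Separating out precisely the maximum $\max_{y\ge0}\bigl|\frac{B(1+iy)}{1+iy}\bigr|=\e_\infty(\Lambda)$ while leaving a universally bounded remainder is the crux of the argument.
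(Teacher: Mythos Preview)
The paper does not prove this proposition at all: it is quoted verbatim as Theorem~2.6, Chapter~11 of \cite{LGM} (Lorentz--Golitschek--Makovoz), and used as a black box. So there is no ``paper's own proof'' to compare against; what you have written is an attempt to reconstruct the argument behind the cited reference.

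Your strategy --- Mellin transform, rational interpolation of $\phi_f$ at the nodes $\lambda_k$, and factoring the defect through the Blaschke product $B$ --- is indeed the framework used in \cite{LGM} (and earlier by Ganelius and Newman). The reduction to $f(0)=0$ via $\lambda_0=0$, the integration by parts producing the crucial factor $1/z$, and the observation that $D/B$ is holomorphic in the right half-plane are all correct and are the backbone of the real proof.

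That said, as you yourself acknowledge, what you have is a roadmap and not a proof. The two points you flag --- the blow-up $|x^{-z}|=x^{-1}$ on $\operatorname{Re}z=1$ destroying uniformity in $x$, and the tracking of the constant $20$ --- are not peripheral technicalities but exactly where the substance of the argument lies. In the actual proof one does not simply ``shift and bound'': the approximation operator is built so that the error admits an integral representation in which $B(1+iy)/(1+iy)$ appears against an explicit kernel whose $L^1(dy)$-norm is bounded \emph{uniformly in $x\in(0,1]$}, and that uniformity comes from a maximum-modulus argument bounding $G$ on the whole half-plane by its boundary values on $\operatorname{Re}z=0$, not from a direct estimate on $\operatorname{Re}z=1$. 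Your sketch does not indicate how either of these is achieved, so as written it stops short of a proof; but the architecture is the right one, and filling in those two steps from \cite{LGM} would complete it.
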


\vspace{0.2cm}
\noindent
As a by-product, we can easily prove:

\begin{coro}\label{M\"untzC1}
Let $\Lambda^* : 0 < \lambda_1 < ... < \lambda_n$ be a finite sequence. Then, for each $f \in C^1 ([0,1])$ with $f(0)=0$, one has:
\begin{equation}
 E(f, \Lambda^*)_{\infty} \leq 40 \  \e_{\infty}(\Lambda^*) \ ||f'||_{\infty}.
\end{equation}
\end{coro}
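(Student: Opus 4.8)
The plan is to reduce the statement for a function $f \in C^1([0,1])$ with $f(0)=0$ to the already-established Proposition \ref{errorM}, which handles the full sequence $\Lambda$ that includes the exponent $\lambda_0 = 0$. The key observation is that $\Lambda^* : 0 < \lambda_1 < \dots < \lambda_n$ is obtained from $\Lambda : 0 = \lambda_0 < \lambda_1 < \dots < \lambda_n$ by deleting the exponent $0$. When we approximate $f$ from $\M(\Lambda)$ we may use the constant term $a_0 x^{\lambda_0} = a_0$, whereas approximating from $\M(\Lambda^*)$ we cannot. The hypothesis $f(0)=0$ is exactly what should compensate for the missing constant term, since the optimal $\M(\Lambda)$-approximant $P_0$ will have a controllable constant term $P_0(0)$.

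First I would apply Proposition \ref{errorM} to $f$ with the sequence $\Lambda$ (which begins with $\lambda_0 = 0$), obtaining a polynomial $P_0(x) = \sum_{k=0}^n a_k x^{\lambda_k} \in \M(\Lambda)$ with
\begin{equation*}
\|f - P_0\|_{\infty} = E(f,\Lambda)_{\infty} \leq 20 \, \e_{\infty}(\Lambda) \, \|f'\|_{\infty}.
\end{equation*}
Next I would estimate the constant term $a_0 = P_0(0)$. Since $f(0)=0$, we have $|a_0| = |P_0(0) - f(0)| \leq \|P_0 - f\|_{\infty} \leq 20\, \e_{\infty}(\Lambda)\,\|f'\|_{\infty}$. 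Then I would set $P_1 = P_0 - a_0 = \sum_{k=1}^n a_k x^{\lambda_k}$, which lies in $\M(\Lambda^*)$, and estimate
\begin{equation*}
\|f - P_1\|_{\infty} \leq \|f - P_0\|_{\infty} + |a_0| \leq 40\, \e_{\infty}(\Lambda)\,\|f'\|_{\infty}.
\end{equation*}
This already gives the factor $40$, so $E(f,\Lambda^*)_{\infty} \leq \|f-P_1\|_\infty \leq 40\,\e_\infty(\Lambda)\,\|f'\|_\infty$.

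The one point requiring care, and the main (though minor) obstacle, is that the index of approximation on the right-hand side of the desired inequality is $\e_{\infty}(\Lambda^*)$, not $\e_{\infty}(\Lambda)$, so I must compare the two. Writing out the Blaschke products, $B(z,\Lambda) = \frac{z - 0}{z+0}\, B(z,\Lambda^*) = B(z,\Lambda^*)$ as rational functions (the factor from $\lambda_0=0$ is $\frac{z}{z} = 1$), so in fact $B(z,\Lambda) = B(z,\Lambda^*)$ identically and hence $\e_{\infty}(\Lambda) = \e_{\infty}(\Lambda^*)$. I would state this identification explicitly to justify replacing $\e_{\infty}(\Lambda)$ by $\e_{\infty}(\Lambda^*)$ in the final bound, which completes the argument.
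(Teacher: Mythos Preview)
Your proposal is correct and follows essentially the same approach as the paper: augment $\Lambda^*$ to $\Lambda$ by adjoining $\lambda_0=0$, apply Proposition~\ref{errorM}, subtract off the constant term using $f(0)=0$, and finish by noting $\e_{\infty}(\Lambda)=\e_{\infty}(\Lambda^*)$. Your explicit justification that the $\lambda_0=0$ factor in the Blaschke product is $z/z=1$ is the only elaboration beyond what the paper writes, and it is accurate.
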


\begin{proof}
Consider the finite sequence $\Lambda : 0= \lambda_0 < \lambda_1 < ... < \lambda_n$. Thanks to Proposition \ref{errorM}, there exists $P \in \mathcal{M}(\Lambda)$ such that
\begin{equation}
  E(f, \Lambda)_{\infty} = ||f-P||_{\infty} \leq 20 \ \e_{\infty} (\Lambda) \ ||f'||_{\infty}.
\end{equation}
Since $f(0)=0$, we deduce that  $|P(0)| \leq 20 \ \e_{\infty}(\Lambda) \ ||f'||_{\infty}$. Thus, setting $Q= P-P(0) \in \mathcal{M}(\Lambda^*)$, we obtain:
\begin{equation}
||f-Q||_{\infty} \leq 40 \ \e_{\infty}(\Lambda) \ ||f'||_{\infty},
\end{equation}
which concludes the proof since $\e_{\infty}(\Lambda)=\e_{\infty}(\Lambda^*)$.
\end{proof}

\noindent
In the same way, we can recursively approximate $C^r$-differentiable functions, ($r \geq 1$). To this end, let  $\Lambda^* :\  r-1 < \lambda_1 < ... < \lambda_n$ be a finite sequence. For $0 \leq k \leq r-1$, we set:
\begin{equation}
 \Lambda^{(k)} : \ \lambda_1^{(k)}= \lambda_1-k,\  ...\ ,\  \lambda_n^{(k)} = \lambda_n -k, \
\end{equation}
and the indices of approximation $\e_{\infty}^{(k)} = \e_{\infty} (\Lambda^{(k)}), \ k=0, 2, ... , r-1$.

\vspace{0.2cm}\noindent
We have the following result:

\begin{coro}\label{M\"untzCr}
For each $f \in C^r ([0,1])$ such that $f^{(k)}(0)=0$ for all $k =0, ..., r-1$, one has:
\begin{equation}
 E(f, \Lambda^*)_{\infty} \leq 40^r  \  \prod_{k=0}^{r-1} \e_{\infty}^{(k)} \ \ ||f^{(r)}||_{\infty}.
\end{equation}
\end{coro}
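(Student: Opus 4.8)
The plan is to argue by induction on $r$, with Corollary \ref{M\"untzC1} furnishing the base case $r=1$. The mechanism linking successive orders is antidifferentiation: the primitive of $Q(x)=\sum_{j=1}^n b_j x^{\lambda_j-1}\in\mathcal{M}(\Lambda^{(1)})$ that vanishes at the origin is $P(x)=\sum_{j=1}^n \frac{b_j}{\lambda_j}x^{\lambda_j}\in\mathcal{M}(\Lambda^*)$, where $P(0)=0$ is legitimate precisely because $\lambda_j>r-1\ge 0$. Thus an approximant of $f'$ drawn from the shifted system $\mathcal{M}(\Lambda^{(1)})$ integrates to an approximant of $f$ drawn from $\mathcal{M}(\Lambda^*)$, and since $f(0)=P(0)=0$ one has the transfer estimate $\|f-P\|_\infty=\sup_{x\in[0,1]}|\int_0^x(f'-Q)\,dt|\le\|f'-Q\|_\infty$ on the unit interval.

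Concretely, for the passage from order $r-1$ to order $r$, I would observe that $f'\in C^{r-1}([0,1])$ inherits the vanishing $(f')^{(k)}(0)=0$ for $k=0,\dots,r-2$, and that $\Lambda^{(1)}$ is admissible for order $r-1$ since its exponents all exceed $r-2$. Applying the inductive hypothesis to $f'$ on the sequence $\Lambda^{(1)}$, and then using the index identity $(\Lambda^{(1)})^{(k)}=\Lambda^{(k+1)}$ to rewrite the resulting product of indices as $\prod_{k=1}^{r-1}\e_\infty(\Lambda^{(k)})$, one obtains a controlled approximant $Q$ of $f'$ with $\|(f')^{(r-1)}\|_\infty=\|f^{(r)}\|_\infty$ on the right. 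The remaining task is then to supply the outermost factor $40\,\e_\infty(\Lambda^{(0)})$, so that the product runs from $k=0$ and the constant becomes $40^r$.

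The heart of the matter, and the step I expect to be the main obstacle, is precisely the production of one factor $40\,\e_\infty(\Lambda^{(k)})$ at every level. The naive transfer inequality $\|f-P\|_\infty\le\|f'-Q\|_\infty$ is a contraction on $[0,1]$ and, used by itself, would merely pass the error down onto a single index rather than build up the full product $\prod_{k=0}^{r-1}\e_\infty(\Lambda^{(k)})$. To generate the factor at the current level one must instead invoke Proposition \ref{errorM} (equivalently Corollary \ref{M\"untzC1}) directly on the function being approximated: approximate first within $\mathcal{M}(\Lambda^{(k)}\cup\{0\})$ and then discard the constant term, the discarded value being bounded by the same index, which is what turns the constant $20$ of Proposition \ref{errorM} into $40$. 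Organising the recursion so that this is carried out at each of the $r$ levels, while simultaneously checking the admissibility of every shifted sequence, the convergence of the successive primitives, and the propagation of all the vanishing conditions $f^{(k)}(0)=0$, is what delivers exactly the constant $40^r$ together with the index product $\prod_{k=0}^{r-1}\e_\infty(\Lambda^{(k)})$. These verifications parallel, level by level, the single computation already performed for $r=1$.
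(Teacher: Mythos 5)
Your overall architecture coincides with the paper's: induct on $r$ with Corollary \ref{M\"untzC1} as base case, approximate $f'$ from the shifted system $\Lambda^{(1)}$ using the identity $(\Lambda^{(1)})^{(k)}=\Lambda^{(k+1)}$, and antidifferentiate the approximant back into $\mathcal{M}(\Lambda^*)$ (your check that $\lambda_j>r-1\geq 0$ legitimizes the primitive is correct and needed). You also correctly diagnose that the naive transfer $\|f-\int_0^x Q\,dt\|_\infty\le\|f'-Q\|_\infty$ cannot by itself produce the factor $40\,\e_{\infty}^{(0)}$. But at exactly this point --- which you yourself flag as the main obstacle --- your proposal supplies the wrong mechanism. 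You propose to generate the current-level factor by invoking Proposition \ref{errorM} ``directly on the function being approximated'', i.e.\ approximating within $\mathcal{M}(\Lambda^{(k)}\cup\{0\})$ and discarding the constant term. That constant-discarding device is internal to Corollary \ref{M\"untzC1}: it is what converts the constant $20$ of Proposition \ref{errorM} into $40$, and it has already been spent in the base case. Applied to $f$ itself it yields only $E(f,\Lambda^*)_\infty\le 40\,\e_{\infty}^{(0)}\,\|f'\|_\infty$, in which the inner estimate on $f'$ --- and hence the whole product $\prod_{k\geq 1}\e_{\infty}^{(k)}$ --- never appears; the two bounds sit side by side and nothing multiplies them together.

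The missing idea, which is the single line carrying the paper's proof, is to apply Corollary \ref{M\"untzC1} not to $f$ but to the \emph{residual} obtained after integrating the inner approximant: with $Q\in\mathcal{M}(\Lambda^{(1)})$ the approximant of $f'$ furnished by the inductive hypothesis, one sets
\begin{equation*}
F(x)=f(x)-\int_0^x Q(t)\,dt ,
\end{equation*}
so that $F(0)=0$ and $F'=f'-Q$. Corollary \ref{M\"untzC1} applied to $F$ with the sequence $\Lambda^*$ gives $R\in\mathcal{M}(\Lambda^*)$ with $\|F-R\|_\infty\le 40\,\e_{\infty}^{(0)}\,\|f'-Q\|_\infty$, and since $\int_0^x Q(t)\,dt+R\in\mathcal{M}(\Lambda^*)$, the inner error is \emph{multiplied}, not replaced, by the new factor, closing the induction with $40^{r}\prod_{k=0}^{r-1}\e_{\infty}^{(k)}\,\|f^{(r)}\|_\infty$. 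This is precisely how the paper argues (written out for $r=2$, with the remark that the general case is identical); your inductive organization and all your side checks --- admissibility of the shifted sequences, propagation of the vanishing conditions --- are fine, but without the residual $F$ the recursion does not close.
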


\begin{proof}
For $r=1$, this estimate is nothing but Corollary \ref{M\"untzC1}. Now, assume that $r=2$ and consider $f \in C^2 ([0,1])$ with $f(0)=f'(0)=0$. Using Corollary \ref{M\"untzC1}
for the function $f'$ and the finite sequence $\Lambda^{(1)}$, we see there exists ${\displaystyle{P(x) = \sum_{k=1}^n a_k \  x^{\lambda_k -1} \in \mathcal{M}(\Lambda^{(1)})}}$ such that
\begin{equation}\label{firstineq}
E(f', \Lambda^{(1)})_{\infty} = ||f'-P||_{\infty} \leq 40 \  \e_{\infty}^{(1)} \ ||f''||_{\infty}.
\end{equation}
We set ${\displaystyle{F(x)= f(x)- \int_0^x P(t) \ dt = f(x) - \sum_{k=1}^n \frac{a_k}{\lambda_k} \ x^{\lambda_k}}}$. Since $F(0)=0$, using again Corollary \ref{M\"untzC1} for the
finite sequence $\Lambda^*$, we see there exists $Q \in \mathcal{M}(\Lambda^*)$ such that
\begin{equation}
E(F, \Lambda^*)_{\infty} = ||F-Q||_{\infty} \leq 40 \  \e_{\infty} \ ||F'||_{\infty},
\end{equation}
or equivalently
\begin{equation}
||f - (\int_0^x P(t) \ dt +Q) || \leq 40 \  \e_{\infty} \ ||f'-P||_{\infty}.
\end{equation}
Observing that ${\displaystyle{\int_0^x P(t) \ dt +Q(x) = \sum_{k=1}^n \frac{a_k}{\lambda_k} x^{\lambda_k} +Q(x) \in \mathcal{M}(\Lambda^*)}}$, and using (\ref{firstineq}), we obtain:
\begin{equation}
 E(f, \Lambda^*)_{\infty} \leq ||f - (\int_0^x P(t) \ dt +Q) || \leq 40^2 \ \e_{\infty} \  \e_{\infty}^{(1)} \ ||f''||_{\infty},
\end{equation}
which proves Corollary \ref{M\"untzCr} in the case $r=2$. For $r \geq 3$, the proof is identical.
\end{proof}

\vspace{0.5cm}
For special finite sequences $\Lambda$, the index of approximation $\e_{\infty}(\Lambda)$ can be replaced by a much simpler expression.
For instance, we have the following result, (\cite{LGM}, Theorem 4.1, Chapter 11):

\vspace{0.1cm}

\begin{thm}\label{tailleepsilon}
Let $\Lambda :  0= \lambda_0 \ <  \lambda_1< \lambda_2< ...<\lambda_n$ be a finite sequence. Assume that  $\lambda_{k+1}-\lambda_k \geq 2$ for  $ k \geq 0$. Then,
\begin{equation}
\e_{\infty} (\Lambda) = |B(1, \Lambda) | = \prod_{k=1}^n \frac{\lambda_k -1} {\lambda_k +1}
\end{equation}
In particular, if  $\lambda_k = 2k+b$ for $k=1, ..., n$  where $b>0$, one has
\begin{equation}
\e_{\infty} (\Lambda) = \frac{b+1}{2n+b+1}.
\end{equation}
\end{thm}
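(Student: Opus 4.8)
The plan is to reduce the whole statement to an elementary monotonicity property of a rational function of $y^{2}$. Since $\lambda_0=0$, the factor of index $k=0$ in the Blaschke product is $\frac{z-0}{z+0}\equiv 1$, so along the vertical line $z=1+iy$ one computes directly
\begin{equation}
\left| \frac{B(1+iy)}{1+iy} \right|^{2} = \frac{1}{1+y^{2}} \prod_{k=1}^{n} \frac{(\lambda_k-1)^{2}+y^{2}}{(\lambda_k+1)^{2}+y^{2}} =: \Phi(t), \qquad t=y^{2}\geq 0 .
\end{equation}
Because $y\mapsto |B(1+iy)/(1+iy)|$ is even, computing $\e_\infty(\Lambda)=\max_{y\geq 0}|B(1+iy)/(1+iy)|$ is the same as showing that $\Phi$ attains its maximum on $[0,\infty)$ at $t=0$. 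Its value there is $\prod_{k=1}^{n}\frac{(\lambda_k-1)^{2}}{(\lambda_k+1)^{2}}$, whose square root is $\prod_{k=1}^{n}\frac{\lambda_k-1}{\lambda_k+1}=|B(1,\Lambda)|$ (note $\lambda_k>1$ for $k\geq 1$, so $|\lambda_k-1|=\lambda_k-1$). Thus the theorem follows once $t=0$ is identified as the maximizer.

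The key step, and the only place where the gap hypothesis is used, is to prove that $\Phi$ is nonincreasing on $[0,\infty)$. Rather than differentiating $\log\Phi$ head-on (which produces a sum of positive terms that is awkward to dominate by $\frac{1}{1+t}$), I would regroup the factors telescopically. Writing $N_k=(\lambda_k-1)^2+t$ and $D_k=(\lambda_k+1)^2+t$, and using $\lambda_0=0$ so that $1+t=(\lambda_0+1)^2+t=D_0$, I pair each numerator with the preceding denominator:
\begin{equation}
\Phi(t)=\frac{N_1\cdots N_n}{D_0 D_1\cdots D_n}=\left( \prod_{k=1}^{n} \frac{(\lambda_k-1)^{2}+t}{(\lambda_{k-1}+1)^{2}+t} \right)\frac{1}{(\lambda_n+1)^{2}+t}.
\end{equation}
The gap condition $\lambda_k-\lambda_{k-1}\geq 2$ is precisely $\lambda_k-1\geq \lambda_{k-1}+1$, hence $(\lambda_k-1)^{2}\geq(\lambda_{k-1}+1)^{2}>0$ for every $k\geq 1$. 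For $x\geq y>0$ the map $t\mapsto \frac{x+t}{y+t}$ has derivative $\frac{y-x}{(y+t)^{2}}\leq 0$, so each paired factor is positive and nonincreasing, and the trailing factor $\frac{1}{(\lambda_n+1)^{2}+t}$ is plainly positive and decreasing. A finite product of nonnegative nonincreasing functions is nonincreasing, so $\Phi$ is nonincreasing and $\max_{t\geq 0}\Phi=\Phi(0)$. This gives the first identity $\e_\infty(\Lambda)=|B(1,\Lambda)|=\prod_{k=1}^{n}\frac{\lambda_k-1}{\lambda_k+1}$.

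For the special case $\lambda_k=2k+b$ with $b>0$, the hypotheses hold because $\lambda_1-\lambda_0=2+b\geq 2$ and $\lambda_{k+1}-\lambda_k=2$, and the product itself telescopes: each factor equals $\frac{2k+b-1}{2k+b+1}$, whose denominator $2k+b+1=2(k+1)+b-1$ is the numerator of the next factor. Hence $\prod_{k=1}^{n}\frac{2k+b-1}{2k+b+1}=\frac{b+1}{2n+b+1}$, which is the asserted value.

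The only genuinely delicate point is the monotonicity of $\Phi$. Once the factors are regrouped using $\lambda_0=0$ together with the gap condition, it collapses to the triviality that a product of positive decreasing functions is decreasing, so I anticipate no real obstacle beyond committing to this regrouping rather than fighting the logarithmic derivative directly.
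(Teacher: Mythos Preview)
Your argument is correct. The paper itself does not prove this theorem; it simply quotes it from \cite{LGM}, Theorem 4.1, Chapter 11, so there is no in-paper proof to compare against. Your telescopic regrouping, which uses $\lambda_0=0$ to identify $1+t=(\lambda_0+1)^2+t$ and then pairs $N_k$ with $D_{k-1}$, is exactly the right device: the gap hypothesis $\lambda_k-\lambda_{k-1}\geq 2$ becomes $(\lambda_k-1)^2\geq(\lambda_{k-1}+1)^2$, each paired factor is a nonincreasing M\"obius-type function of $t$, and the trailing $1/D_n$ forces strict decrease. The telescoping in the special case $\lambda_k=2k+b$ is handled correctly as well. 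This is a clean, self-contained argument that could replace the bare citation.
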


\subsection{A Hausdorff moment problem with non-integral powers.}

\vspace{0.5cm}\noindent
As an application of the previous section, we shall give an approximation of the $L^2$-norm of a function $f$ from the approximately knowledge of a finite number of these moments:
\begin{equation}\label{moments}
 m_k = \int_0^1 \ t^{\lambda_k} \ f(t) \ dt \ ,\ k \in X \subset \N.
\end{equation}
Thanks to Theorem \ref{M\"untz}, if $\Lambda_{\infty} = \{ 0 \leq \lambda_0 < \lambda_1 < ...< \lambda_n < ... \}$, $\lambda_n \rightarrow + \infty$ is a sequence of positive real numbers such that
\begin{equation}
\sum_{k=1}^{+\infty} \frac {1}{\lambda_k} = \infty,
\end{equation}
the system $\{ x^{\lambda_0}, \ x^{\lambda_1}, ... \}$ is complete in $L^2([0,1])$. Thus, the knowledge of the complete sequence ${\displaystyle{(m_k)_{k \geq 0}}}$, uniquely determines the function $f$. But, in practice, one has available only a finite set $m_0, ..., m_n$ of moments, and furthermore these moments are usually corrupted with noise. It is well-known that this problem is severely ill-posed, (see for instance \cite{AGLT} and references therein, for further details).

\vspace{0.2cm} \noindent
Let us briefly explain the approach given in \cite{AGLT}. Using the Gram-Schmidt process, we define the polynomials $(L_m(x))$ as $L_0(x)=1$, and for $m \geq 1$,
\begin{equation}\label{Lm}
L_m(x) = \sum_{j=0}^m C_{mj} x^{\lambda_j},
\end{equation}
where we have set
\begin{equation}\label{cm}
C_{mj}= \sqrt{2\lambda_m +1} \  \frac{ \prod_{r=0}^{m-1} (\lambda_j + \lambda_r +1)}{ \prod_{r=0, r \not= j}^{m} (\lambda_j - \lambda_r)}.
\end{equation}
The family $(L_m (x))$ defines an orthonormal Hilbert basis of $L^2([0,1])$. For instance, if $\lambda_k = k$, the polynomials $(L_m(x))$ are the Legendre polynomials, and in this case, the latter coefficients $C_{mj}$ are given by:
\begin{equation}\label{coefLegendre}
C_{mj}^0 : =  \sqrt{2m+1} \ (-1)^{m-j} \ \frac{(m+j)!}{(m-j)! \ j!^2}
\end{equation}
Note that, the generalized binomial theorem gives easily the upper bound:
\begin{equation}\label{estCmj}
| C_{mj}^0 | \leq \sqrt{2m+1} \ 3^{m+j}.
\end{equation}

\vspace{0.2cm} \noindent
Now, let us consider the finite sequence
\begin{equation}
 \Lambda:= \Lambda_n : 0 \leq \lambda_0 <\lambda_1 < ... <\lambda_n.
\end{equation}
Assume that the $(n+1)$ first moments of a function $f \in L^2([0,1])$ are equal to zero up to noise, i.e there exists $\e>0$ such that
\begin{equation}\label{bruit}
| \int_0^1 f(t) \ t^{\lambda_k} \ dt | \leq \e  \ ,\ \forall k=0, ..., n.
\end{equation}
We denote $\pi_n (f)$ the orthogonal projection on on the subspace $\mathcal{M}(\Lambda)$:
\begin{equation}
\pi_n(f) = \sum_{k=0}^n <f, L_k > L_k.
\end{equation}
Thus, we deduce that:
\begin{eqnarray}\label{inegproj}
|| \pi_n (f) ||_2^2 & = & \sum_{k=0}^n | <f, \sum_{p=0}^k C_{kp} \  x^{\lambda_p} > |^2 \nonumber \\
                    &\leq & \e^2 \ \sum_{k=0}^n \left( \sum_{p=0}^k |C_{kp}| \right)^2,
\end{eqnarray}
thanks to our hypothesis on the moments (\ref{bruit}). So, we get immediately:
\begin{eqnarray*}
||f||_2^2       &=& || \pi_n (f) ||_2^2  + ||f- \pi_n (f) ||_2^2 \\
                &=& || \pi_n (f) ||_2^2 + E(f, \Lambda)_2^2\\
                & \leq& \e^2 \ \sum_{k=0}^n \left( \sum_{p=0}^k |C_{kp}| \right)^2  + E(f, \Lambda)_{\infty}^2 \\
                & \leq& \e^2 \ \sum_{k=0}^n \left( \sum_{p=0}^k |C_{kp}| \right)^2  + E(f, \Lambda^*)_{\infty}^2,
\end{eqnarray*}
where $\Lambda^* : 0 < \lambda_1 < ... < \lambda_n$. In particular, if $f \in C^r([0,1]$ with $f^{(k)}(0)=0$ for $k =0, ..., r-1$, and if $r-1< \lambda_1$, we get
using Corollary \ref{M\"untzCr}:
\begin{equation} \label{balance}
||f||_2^2   \leq  \e^2 \ \sum_{k=0}^n \left( \sum_{p=0}^k |C_{kp}| \right)^2 + \left( 40^r  \  \prod_{k=0}^{r-1} \e_{\infty}^{(k)} \ \ ||f^{(r)}||_{\infty} \right)^2.
\end{equation}

At this stage, it is important to make the following remark: on the one hand, the double sum appearing  in the (RHS) of (\ref{balance}) can be very large with respect to $n$.
Indeed, in the case $\lambda_k=k$, the coefficients $C_{k0}^0$ are equal to  $\sqrt{2k+1}$ thanks to  (\ref{coefLegendre}). On the other hand, if $\lambda_k = 2k$,
Theorem \ref{tailleepsilon} suggests that the second term in the (RHS) of (\ref{balance}) is equal to $O(n^{-r})$, $n \rightarrow +\infty$. Thus, if we want to control
reasonably (with respect to $\e$) the $L^2$-norm of the function $f$, we have to choose a suitable $n=n(\e)$ in the equation (\ref{balance}). Of course, this choice will depend heavily of the behaviour of the coefficients $C_{kp}$. This will be done in the next two sections where we separate the simpler case
where the metric in regular, and the case where the metric is singular at $r=0$.

 \subsection{The regular case.}

 \subsubsection{Stability estimates for the potentials.}

In this case, we recall that necessarily, the metric $g_S = d\Omega^2$ and  $\kappa_k = k + \frac{d-2}{2}$, (we order the Steklov spectrum without counting multiplicity).
So, making the change of variables $t=e^{-x}$ in (\ref{firstest}), we obtain:
\begin{equation} \label{moment}
| \int_0^1  t^{2k+d-3} \ B[q_{\tilde{f}} -q_f] (-\log t)  \ dt | \leq C_A\ \e, \ \forall k \geq 0.
\end{equation}
Thus,  introducing $\delta \in ]0,1[$, we get:
\begin{equation} \label{moment1}
| \int_0^1  t^{\lambda_k} \ h(t) \ dt | \leq C_A\ \e, \ \forall k \geq 0,
\end{equation}
where we have set $h(t) = t^{- \frac{\delta+1}{2}} B[q_{\tilde{f}} -q_f] (-\log t)$ and $\lambda_k = 2k+d-3 + \frac{\delta+1}{2}$. Note that
\begin{equation}\label{L2h}
||h||_{L^2(0,1)} = || B[q_{\tilde{f}} - q_f] ||_{\mathcal{H}_{\delta}}.
\end{equation}

\vspace{0.2cm}\noindent
Using Lemma \ref{derivK1}, we see that $h(t) \in C^{m-2}((0,1])$ and we have the following result:

\begin{lemma}
The function  $t \to h(t)$ extends in a $C^{p-1}$-differentiable function on $[0,1]$ with $h^{(k)}(0)=0$ for $k=0, ..., p-1$.  Moreover, there exists a constant $C_A>0$ such that, for all $k \leq p-1$,
\begin{equation}\label{estderiveeh}
|| h^{(k)} ||_{\infty} \leq C_A.
\end{equation}
\end{lemma}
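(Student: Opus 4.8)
The plan is to pass to the exponential variable $x=-\log t\in[0,+\infty)$, in which $t\to 0^+$ corresponds to $x\to+\infty$. Setting $w:=B[q_{\tilde{f}}-q_f]$, so that $h(t)=t^{-\frac{\delta+1}{2}}\,w(-\log t)=e^{\frac{\delta+1}{2}x}w(x)$, the entire statement reduces to controlling the decay of $w$ and of its derivatives as $x\to+\infty$. The mechanism is that the weight $t^{-\frac{\delta+1}{2}}=e^{\frac{\delta+1}{2}x}$ together with the powers $t^{-k}=e^{kx}$ produced by differentiating $h$ must be beaten by the exponential decay of the $w^{(i)}$; everything hinges on $\delta<1$, which is exactly what leaves room at the top order $k=p-1$.

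First I would establish the decay of $w$ and its derivatives. From the representation $w(x)=(q_{\tilde{f}}-q_f)(x)+\int_0^x K_1(x,t)(q_{\tilde{f}}-q_f)(t)\,dt$ and Leibniz's rule, $w^{(i)}(x)$ is a sum of $(q_{\tilde{f}}-q_f)^{(i)}(x)$, of boundary terms built from diagonal values $(\partial_x^a\partial_t^b K_1)(x,x)$ multiplied by derivatives of $q_{\tilde{f}}-q_f$, and of the integral $\int_0^x\partial_x^i K_1(x,t)(q_{\tilde{f}}-q_f)(t)\,dt$. Fix any $\alpha<p$. By (\ref{estqA}) one has $|(q_{\tilde{f}}-q_f)^{(l)}(x)|\le C_A e^{-px}$ for $l\le m-2$, and by (\ref{estkernelK1}) one has $|\partial_x^a\partial_t^b K_1(x,t)|\le C_{A,\alpha}e^{-\alpha x}$ for $a,b\le m-1$. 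Hence the boundary terms decay at least like $e^{-(\alpha+p)x}$, while the integral is bounded by $C_{A,\alpha}e^{-\alpha x}\int_0^x e^{-pt}\,dt\le C_{A,\alpha}e^{-\alpha x}$. Since $\alpha<p$, the integral dominates and I obtain $|w^{(i)}(x)|\le C_{A,\alpha}e^{-\alpha x}$ for every $i\le m-2$ and every $\alpha<p$.

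Next I would differentiate $h$. By the chain rule (Fa\`a di Bruno), for $1\le k\le m-2$ one has $h^{(k)}(t)=t^{-k}\sum_{j=1}^{k}a_{k,j}\,\phi^{(j)}(-\log t)$ with constants $a_{k,j}$ and $\phi(x):=e^{\frac{\delta+1}{2}x}w(x)$, while $h=\phi$ for $k=0$. Expanding $\phi^{(j)}$ by the Leibniz rule gives $\phi^{(j)}(x)=e^{\frac{\delta+1}{2}x}\sum_{i=0}^{j}\binom{j}{i}\left(\tfrac{\delta+1}{2}\right)^{j-i}w^{(i)}(x)$, so each contribution to $h^{(k)}(t)$ is a constant multiple of $e^{(k+\frac{\delta+1}{2})x}w^{(i)}(x)$ with $i\le k$. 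Using the decay just proved, such a term is bounded by $C_{A,\alpha}\,e^{(k+\frac{\delta+1}{2}-\alpha)x}$. For $k\le p-1$ one has $k+\frac{\delta+1}{2}\le p-1+\frac{\delta+1}{2}<p$, precisely because $\delta<1$; hence the interval $\left(p-1+\tfrac{\delta+1}{2},\,p\right)$ is nonempty and I may fix a single $\alpha$ inside it. With this choice the exponent $k+\frac{\delta+1}{2}-\alpha$ is strictly negative for every $k\le p-1$, so each term, and therefore $h^{(k)}(t)$, is bounded by $C_A$ on $(0,1]$ and tends to $0$ as $x\to+\infty$, that is as $t\to 0^+$.

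Finally, I would conclude by the standard extension lemma: since $h\in C^{m-2}((0,1])$ and $\lim_{t\to 0^+}h^{(k)}(t)=0$ for every $k=0,\dots,p-1\le m-2$, the function $h$ extends to a $C^{p-1}$ function on $[0,1]$ with $h^{(k)}(0)=0$, and the uniform bounds above yield $\|h^{(k)}\|_\infty\le C_A$. I expect the only delicate point to be the exponent bookkeeping of the third paragraph: one must verify that the decay rate $\alpha<p$ available for the $w^{(i)}$ can simultaneously dominate the growth $e^{(p-1+\frac{\delta+1}{2})x}$ coming from the highest-order derivative together with the weight, and it is exactly the hypothesis $\delta<1$ (equivalently $\frac{\delta+1}{2}<1$) that keeps the admissible window for $\alpha$ open up to order $p-1$.
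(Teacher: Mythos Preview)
Your proof is correct and follows essentially the same approach as the paper. The paper splits $h=h_1+h_2$ with $h_1$ coming from the identity part of $B$ and $h_2$ from the Volterra part $C$, and obtains the bounds $|h_1^{(k)}(t)|\le C_A\,t^{p-k-\frac{\delta+1}{2}}$ and $|h_2^{(k)}(t)|\le C_{A,\alpha}\,t^{\alpha-k-\frac{\delta+1}{2}}$; you instead bound $w^{(i)}$ directly in the $x$-variable and then apply Fa\`a di Bruno, but the underlying estimates and the decisive exponent check $p-1+\tfrac{\delta+1}{2}<p\iff\delta<1$ are identical.
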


\begin{proof}
We only sketch the proof since the arguments are straightforward. Recalling that $B = Id + C$, we write :
\begin{eqnarray*}
h(t) &= & t^{- \frac{\delta+1}{2}} [q_{\tilde{f}} -q_f] (-\log t) + t^{- \frac{\delta+1}{2}} C[q_{\tilde{f}} -q_f] (-\log t) \\
     &:=& h_1(t) + h_2(t).
\end{eqnarray*}
First, let us estimate the derivatives of $h_1(t)$. Setting $Q(x):= [q_{\tilde{f}} -q_f](x)$, we deduce immediately from (\ref{estqA}) that:
\begin{equation}
|Q^{(k)} (x) | \leq C_A \ e^{-px}, \forall x \geq 0, \ \forall k = 0, ...,m-2.
\end{equation}
Then, we get  easily:
\begin{equation}\label{esth1}
 | h_1^{(k)} (t) | \leq C_A \ t^{p-k- \frac{\delta+1}{2}} \ ,\ \forall k =0, ..., m-2, \ \ \forall t \in ]0,1].
\end{equation}
In the same way, using again Lemma \ref{derivK1}, a tedious calculation shows that for all $\alpha <p$, there exists a constant $C_{A,\alpha}>0$ such that
\begin{equation}\label{esth2}
 | h_2^{(k)} (t) | \leq C_{A, \alpha} \ t^{\alpha-k- \frac{\delta+1}{2}} \ ,\ \forall k =0, ..., m-2.
\end{equation}
Then, it follows from (\ref{esth1}) and (\ref{esth2}) that the estimate (\ref{estderiveeh}) is satisfied. Moreover, for all $k=0, ..., p-1$, we see that $h^{(k)}(t) \rightarrow 0$ as $t \rightarrow 0$. This concludes the proof.
\end{proof}

\vspace{0.5cm}
\noindent
Now, let us estimate the M\"untz coefficients $C_{mj}$ associated with  $\lambda_k = 2k+d-3 + \frac{\delta+1}{2}$. To simplify the notation we set $b = d-3 + \frac{\delta+1}{2}$.
Using (\ref{cm}), we get:
\begin{equation}
 C_{mj} = \sqrt{4m+2b+1} \ \frac{ \prod_{r=0}^{m-1} (2j+2r+4b+1)}{\prod_{r=0, r \not=j}^m (2j-2r)}.
\end{equation}
Setting $M = \max (2, 4b+1)$, we get easily:
\begin{equation}
 |C_{mj} | \leq \sqrt{4m + \frac{M+1}{2}} \ \left(\frac{M}{2}\right)^m \ \left| \frac{ \prod_{r=0}^{m-1} (j+r+1)}{\prod_{r=0, r \not=j}^m (j-r)} \right|,
\end{equation}
or equivalently
\begin{equation}
 |C_{mj} | \leq \sqrt{\frac{4m + \frac{M+1}{2}}{2m+1}} \ \left(\frac{M}{2}\right)^m \ | C_{mj}^0 |.
\end{equation}
Thus, there exists a universal constant $B>0$ such that
\begin{equation}\label{estCmjreg}
 |C_{mj} | \leq B  \ \left(\frac{M}{2}\right)^m \ |C_{mj}^0|.
\end{equation}

\noindent
Thanks to (\ref{estCmjreg}),  we can estimate the $L^2$-norm of the ortogonal projection $\pi_n (h)$ on the subspace $\mathcal{M}(\Lambda)$ where $\Lambda = \Lambda_n: 0< \lambda_0 < \lambda_1 < ... <\lambda_n$, where
$\lambda_k = 2k+d-3 + \frac{\delta+1}{2}$:
\begin{eqnarray*}
||\pi_n (h)||_2^2   &\leq & \e^2 \ \sum_{k=0}^n \left( \sum_{p=0}^k |C_{kp}| \right)^2  \\
                  &\leq & \e^2 \ \sum_{k=0}^n \left( \sum_{p=0}^k  \left(\frac{M}{2}\right)^k \ |C_{kp}^0| \right)^2 \\
                   &\leq & B^2 \e^2 \ \sum_{k=0}^n \left(\frac{M}{2}\right)^k \left( \sum_{p=0}^k   \ |C_{kp}^0| \right)^2.
\end{eqnarray*}
Using (\ref{estCmj}), we see that
\begin{equation}
 \sum_{p=0}^k   \ |C_{kp}^0| \leq \sqrt{2k+1} \sum_{p=0}^k 3^{k+p} \leq  \frac{3}{2} \ \sqrt{2k+1} \ 3^{2k}.
\end{equation}
We deduce that

\begin{eqnarray*}
 \sum_{k=0}^n \left(\frac{M}{2}\right)^k \left( \sum_{p=0}^k   \ |C_{kp}^0| \right)^2 &\leq & \ \frac{9}{4} \sum_{k=0}^n \left(\frac{M}{2}\right)^{2k} (2k+1) 3^{4k} \\
                 &\leq &  \ \frac{9}{4} \ (2n+1) \   \sum_{k=0}^n \left(\frac{9M}{2}\right)^{2k}  \\
                 &\leq &  \ \frac{9}{4}  \  (2n+1)  \frac {  \left(\frac{9M}{2}\right)^{2n+2}   } {  \left(\frac{9M}{2}\right)^{2}-1} := g(n)^2,
\end{eqnarray*}
where $ g :[0, + \infty[$ is the strictly increasing function defined for $t \in [0,+\infty[$ as
\begin{equation}
 g(t) = \frac{3}{2}   \frac {  1   } {\sqrt{  \left(\frac{9M}{2}\right)^{2}-1}} \ \sqrt{2t+1} \ \left(\frac{9M}{2}\right)^{t+1}.
\end{equation}
At this stage, we have obtained
\begin{equation}\label{estproj1}
  ||\Pi_{n} h||_2^2 \leq B^2 \e^2 \ g(n)^2.
\end{equation}

\noindent
Now, let us choose a suitable integer $n$ to control properly the norm of the projection $||\pi_{n} h||_2^2$. We set ${\displaystyle{n(\epsilon) := E \ (g^{-1} ( \frac{1}{\sqrt{\e}}))}}$. Clearly, since $g$ is an increasing function, one has $g(n(\e)) \leq \frac{1}{\sqrt{\e}}$, and thanks to (\ref{estproj1}) we get immediately:
\begin{equation}\label{stability1}
 ||\pi_{n(\e)} h||_2^2 \leq B^2 \e.
\end{equation}

\vspace{0.5cm}
It remains to estimate $||h-\pi_{n(\e)} h||_2= E(h, \Lambda_{n(\e)})_2$. First, let us introduce for $\e$ small enough,
\begin{equation}
\tilde{\Lambda}_{n(\e)} : \lambda_{k_0} < \lambda_{k_0+1}< ... < \lambda_{n(\e)},
\end{equation}
where $\lambda_{k_0}>p-2$. Obviously, $E(h, \Lambda_{n(\e)})_2 \leq E(h, \tilde{\Lambda}_{n(\e)})_2 \leq E(h, \tilde{\Lambda}_{n(\e)}))_{\infty}$. So using Corollary
\ref{M\"untzCr} with $r=p-1$, we obtain:
\begin{equation}
E(h,\tilde{\Lambda}_{n(\e)})_{\infty} \leq 40^{p-1}  \  \prod_{k=0}^{p-2} \e_{\infty}^{(k)} \ \ ||h^{(p-1)}||_{\infty},
\end{equation}
where $\e_{\infty}^{(k)}= \e_{\infty}(\tilde{\Lambda}_{n(\e)}^{(k)})$. Since $\lambda_{j+1}^{(k)} - \lambda_{j}^{(k)}=2$, we can use Theorem \ref{tailleepsilon}, and thanks to (\ref{estderiveeh}), we get easily:
\begin{equation}
E(h,\tilde{\Lambda}_{n(\e)})_{\infty} \leq C_A  \  \left(\frac{1}{n(\e)}\right)^{p-1}.
\end{equation}
Now, a straightforward calculation shows that
\begin{equation}
 n(\e) \sim C \log ( \frac{1}{\e} ), \ \e \rightarrow 0,
\end{equation}
for a suitable constant $C>0$. Thus, we have proved
\begin{equation}\label{stability2}
E(h,\tilde{\Lambda}_{n(\e)})_{\infty} \leq C_A  \  \left(\frac{1}{ \log ( \frac{1}{\e} )}\right)^{p-1}.
\end{equation}
As a conclusion, thanks to (\ref{L2h}), (\ref{stability1})  and (\ref{stability2}), we have obtained  for $\e$ small enough:
\begin{equation}\label{stableh}
|| B[q_{\tilde{f}} - q_f ]||_{\mathcal{H}_{\delta}} \leq C_A \left(\frac{1}{ \log ( \frac{1}{\e} )}\right)^{p-1}.
\end{equation}
By Proposition \ref{isomorphism}, $B : \mathcal{H}_{\delta} \rightarrow \mathcal{H}_{\delta}$ is an isomorphism and $||B^{-1} || \leq C_A$, then we get a stability estimate between the two potentials $q_{\tilde{f}}$ and $q_f$ for the topology of ${\mathcal{H}_{\delta}}$:
\begin{equation}\label{stableq}
|| q_{\tilde{f}} - q_f ||_{\mathcal{H}_{\delta}} \leq C_A \left(\frac{1}{ \log ( \frac{1}{\e} )}\right)^{p-1}.
\end{equation}

\subsubsection{Stability estimates for the warping functions.}

\vspace{0.5cm}\noindent
First, let us  prove a stability estimate for  the warping functions $\tilde{f}$ and $f$ in the variable $x \in (0, +\infty)$. In order to simplify the notation,
we set $F = f^{d-2}, \ \tilde{F} = \tilde{f}^{d-2}$.  We we write:
\begin{eqnarray*}
(\tilde{F}'F - F' \tilde{F})' (y) &=&  F \tilde{F} \ (q_{\tilde{f}} -q_f) (y) \\
                                  &\leq& C_A \ e^{-(d-2)y} \ |(q_{\tilde{f}} -q_f) (y)| ,
\end{eqnarray*}
since for instance, $f(y)= c(e^{-y}) e^{- \frac{y}{2}}$ and $c \in \mathcal{C}(A)$. Integrating this last inequality on the interval $(x, \infty)$, and setting $G = \tilde{F}'F - F' \tilde{F}$,
we get:
\begin{eqnarray*}\label{estG}
 |G(x)| &\leq& C_A \ e^{-(d-2)x} \  \int_x^{+\infty}  |(q_{\tilde{f}} -q_f) (y)| \ dy \\
        &\leq& C_A \ e^{-(d-2)x} \  \int_x^{+\infty} e^{- \frac{\delta}{2} y} \ e^{\frac{\delta}{2} y} |(q_{\tilde{f}} -q_f) (y)| \ dy \\
        &\leq& C_A \ e^{-(d-2+\frac{\delta}{2})x} \  || q_{\tilde{f}} - q_f ||_{\mathcal{H}_{\delta}} \\
        &\leq& C_A \ e^{-(d-2+\frac{\delta}{2})x} \  \left(\frac{1}{ \log ( \frac{1}{\e} )}\right)^{p-1},
\end{eqnarray*}
where we have used (\ref{stableq}) and the Cauchy-Schwarz inequality.
Writing $\left( \frac{ \tilde{F}}{F} \right)' = \frac{G}{F^2}$, and since $c \in \mathcal{C}(A)$, we deduce from (\ref{estG}) that
\begin{eqnarray*}
 \left( \frac{ \tilde{F}}{F} \right)' (y) &\leq& C_A \ e^{(d-2)y} \  e^{-(d-2+\frac{\delta}{2})y} \left(\frac{1}{ \log ( \frac{1}{\e} )}\right)^{p-1} \\
                                          &\leq& C_A \  \  e^{-\frac{\delta}{2} y} \left(\frac{1}{ \log ( \frac{1}{\e} )}\right)^{p-1}.
\end{eqnarray*}
 We integrate again this equality on $(x, \infty)$ and we get:
\begin{equation}\label{estfraction}
 |1 - \frac{ \tilde{F}}{F} (x) | \leq C_A \ e^{-\frac{\delta}{2} x} \ \left(\frac{1}{ \log ( \frac{1}{\e} )}\right)^{p-1}
\end{equation}
Then, we deduce easily that, for all $x \geq 0$,
\begin{equation}
| \tilde{f}(x) - f(x) | \leq C_A \ e^{-\frac{\delta+1}{2} x} \ \left(\frac{1}{ \log ( \frac{1}{\e} )}\right)^{p-1}.
\end{equation}
As a consequence, in the variable $r=e^{-x} \in (0,1]$, we get:
\begin{equation}
| \tilde{c}(r) - c(r) | \leq C_A \ r^{\frac{\delta}{2} } \ \left(\frac{1}{ \log ( \frac{1}{\e} )}\right)^{p-1},
\end{equation}
and the proof of Theorem \ref{LogStabSteklov} is now complete.

\subsection{The singular case.}

\vspace{0.2cm}
We only sketch the proof since it is very similar to the previous one in the regular case. The real difference lies in the fact that  we have no explicit formula for the angular eigenvalues $\kappa_k$,
and thus
we have to use the Weyl asymptotics (\ref{WeylLaw}). First, let us consider the sub-sequence $\nu_k = \kappa_{k^{d-1}}$. Thanks to Lemma \ref{momentsexp}, we get:
\begin{equation} \label{firstest2}
\left| \int_0^{+\infty}  e^{-2\nu_k x} B[q_{\tilde{f}} -q_f] (x) \ dx \right| \leq C_A\ \e \ ,\ \forall k \geq 0.
\end{equation}
Thus, making the change of variables $t = e^{-x}$, we obtain immediately:
\begin{equation} \label{firstest3}
\left| \int_0^1  t^{2 \nu_k -1}  B[q_{\tilde{f}} -q_f] (- \log t) \ dt \right| \leq C_A\ \e \ ,\ \forall k \geq 0.
\end{equation}
Now, let us choose $\alpha \in ] \frac{1}{2}, \frac{3}{2}[$ and let $N$ be an integer large enough which we shall specify below.  We deduce easily from (\ref{firstest3}) that:
\begin{equation} \label{firstest4}
\left| \int_0^1  t^{\lambda_k}  h(t) \ dt \right| \leq C_A\ \e \ ,\ \forall k \geq 0,
\end{equation}
where we have set $h(t) = t^{-\alpha} B[q_{\tilde{f}} -q_f] (- \log t)$ and $\lambda_k =  2\nu_{Nk} + \alpha -1$. Note  that
$||h||_{L^2 (0,1)} = || B[q_{\tilde{f}} -q_f] ||_{\mathcal{H}_{\delta}}$ with $\delta= (2\alpha-1) \in (0,1[$.

\vspace{0.5cm}
First, let us verify that we are in the separate case in the framework of the M\"untz-Jackson approximation,  i.e $\lambda_{k+1}-\lambda_k \geq 2$.
It follows from the Weyl's law (\ref{WeylLaw}) that there exists $C \geq 1$  such that:
\begin{equation}
| \nu_{k} - c_{d-1} k | \ \leq \  C \ ,\ \forall k \geq 0.
\end{equation}
Now, let us fix an integer $N$ large enough such that $B:=c_{d-1}N >3C$. One deduces that
\begin{eqnarray*}
\lambda_{k+1} - \lambda_k & =    & 2 [ \nu_{(k+1)N} - \nu_{kN} ] \\
                          & \geq & 2 [(B(k+1) -C) -(Bk+C) ] \\
                          & \geq & 2B-4C \geq 2C \geq 2.
\end{eqnarray*}

\vspace{0.5cm}
Second, let us  estimate the index of approximation $\epsilon_{\infty}(\Lambda)$. We recall that
\begin{equation}
\epsilon_{\infty}(\Lambda) = \prod_{k=1}^n \frac{\lambda_k -1}{\lambda_k +1} =  \prod_{k=1}^n \left( 1 - \frac{2}{\lambda_k +1} \right).
\end{equation}
Since $\lambda_k >1$, we easily get:
\begin{eqnarray*}
\log \epsilon_{\infty}(\Lambda) &=& \sum_{k=1}^n \log \ ( 1 - \frac{2}{\lambda_k +1} )\\
                              & \leq & -2 \ \sum_{k=1}^n \frac{1}{\lambda_k +1} \\
                              & \leq &  -2  \ \sum_{k=1}^n \frac{1}{2(Bk+C) + \alpha}.
\end{eqnarray*}
Then, we deduce the following estimate :
\begin{equation}
\epsilon_{\infty}(\Lambda) = O \left( \frac{1}{n^{\frac{1}{B}}} \right).
\end{equation}

\vspace{0.5cm}
Finally, we have to estimate the M\"untz coefficients $C_{mj}$ associated with our  sequence $\lambda_k$ as in the previous section in the regular case. We recall that
\begin{equation}\label{cm1}
C_{mj}= \sqrt{2\lambda_m +1} \  \frac{ \prod_{r=0}^{m-1} (\lambda_j + \lambda_r +1)}{ \prod_{r=0, r \not= j}^{m} (\lambda_j - \lambda_r)}.
\end{equation}
Clearly, for $j >r$ (for instance), one has $\lambda_j - \lambda_r = 2 (\nu_{kN} - \nu_ {jN}) \geq 2 [ B(j-r) -2C]$. Thus, choosing $b \in (0, B-2C]$, we get immediately
$\lambda_j - \lambda_r \geq b (j-r)$. We deduce that:
\begin{equation}
 | \prod_{r=0, r \not= j}^{m} (\lambda_j - \lambda_r) | \ \geq \ (2b)^m \prod_{r=0, r \not= j}^{m} |j-r|
\end{equation}
In the same way, one has:
\begin{eqnarray*}
 |\lambda_j + \lambda_r | &=& | \nu_{kj} + \nu_{kr} + 2 \alpha -1 | \\
                          &\geq& |B(j+r) +2C + 2\alpha-1 | \\
                          &\geq& M \ |j+r+1 |,
\end{eqnarray*}
where $M = max \{B, 2C+2\alpha -1\}$. It follows there exists $D>0$ such that
\begin{equation}
 | C_{mj} | \leq D \ \left(\frac{M}{2b}\right)^m \ |C_{mj}^0 |.
\end{equation}

\vspace{0.5cm}
Now, following exactly the same approach as in the regular case, and taking $\theta = \frac{1}{B}\in (0,1[$, we get Theorem \ref{LogStabSteklovsing}. The details are left to the reader.




\begin{thebibliography}{99}


\bibitem{Ale} Alessandrini G., \emph{Stable determination of conductivity by boundary measurements}, Appl. Anal. $\mathbf{27}$ (1988), 153-172.
\bibitem{AGLT} Ang D. D. , Gorenflo R., Le V. K., Trong D. D., \emph{Moment theory and some inverse problems in potential theory and heat conduction},
Lecture Notes in Mathematics, $\mathbf{1792}$, (2002).
\bibitem{Be} Bennewitz C., \emph{A proof of the local Borg-Marchenko Theorem}, Comm. Math. Phys. $\mathbf{211}$, (2001), 131-132.
\bibitem{BS} Berezin A., Shubin M.A., \emph{The Schr\"odinger equation}, Kluwer, Dordrecht, (1991)
\bibitem{Boa} Boas R.P., \emph{Entire Functions}, Academic Press, (1954).
\bibitem{Bo1} Borg G., \emph{Eine Umkehrung der Sturm-Liouvilleschen Eigenwertaufgabe, Bestimmung der Differentialgleichung durch die Eigenwerte} Acta. Math. $\mathbf{78}$, (1946), 1 - 96.
\bibitem{Bo2}  Borg G.,  \emph{Uniqueness theorems in the spectral theory of $y'' + q y = 0$, Den $11$te Skandinaviske Matematikerkongress, Trondheim 1949, (Proceedings)}, (1952), 276 - 287.
\bibitem{DL} Danielyan A. A., Levitan B. M., \emph{Asymptotic behavior of the Weyl-titchmarsh $m$-function}, Math USSR Izv $\mathbf{36}$, (1991), 487-496.
\bibitem{DKN2} Daud\'e T., Kamran N., Nicoleau F., \emph{Non uniqueness results in the anisotropic Calder\'on problem with Dirichlet and Neumann data measured on disjoint sets}, to appear in Annales de l'Institut Fourier, 49 pages, (2019).
\bibitem{DKN3} Daud\'e T., Kamran N., Nicoleau F.,\emph{On the hidden mechanism behind non-uniqueness for the anisotropic Calder\'on problem with data on disjoint sets}, (2017), (preprint arXiv: 1701.09056).
\bibitem{DKN4} Daud\'e T., Kamran N., Nicoleau F., \emph{The anisotropic Calder\'on problem for singular metric of warped product type : the borderline between uniqueness and invisibility}, (2018), ArXiv:1805.05627.

\bibitem{DuSc} Duffin R. J., Schaeffer A. C., \emph{Power series with bounded coefficients}, Amer. J. Math. 67 (1945), 141-154.
\bibitem{ET} Eckhardt J., Teschl G., \emph{Uniqueness results for Schr\"odinger operators on the line with purely discrete spectra}, Trans. Amer. Math. Soc. $\mathbf{365}$, (2013), 3923-3942.
\bibitem{GS1} Gesztesy F., Simon B., \emph{A new approach of inverse spectral theory, II. General potentials and the connection to the spectral measure}, Annals of mathematics $\mathbf{152}$, (2000), 593-643.
\bibitem{GS2} Gesztesy F., Simon B., \emph{On local Borg-Marchenko uniqueness results}, Comm. Math. Phys. $\mathbf{211}$, (2000), 273-287.
\bibitem{GP} Girouard A., Polterovich I., \emph{Spectral geometry of the Steklov spectrum},  Journal Spectral Theory $\mathbf{7}$, no. 2, (2017), 321-359.
\bibitem{Hor1}  Horvath M., \emph{Partial identification of the potential from phase shifts}, J. Math. Anal. Appl. $\mathbf{380}$, no.2, (2011), 726-735.
\bibitem{Hor2}  Horvath M., \emph{Inverse scattering with fixed energy and an inverse eigenvalue problem on the half-line}, Trans. Amer. Math. Soc. $\mathbf{358}$, no.11, (2006), 5161-5177.
\bibitem{JoSh}  Jollivet A.,Sharafutdinov V., \emph{On an inverse problem for the Steklov spectrum of a Riemannian surface}, Contemporary Mathematics $\mathbf{615}$ (2014), 165-191.
\bibitem{KST} Kostenko A., Sakhnovich A., Teschl G., \emph{Weyl-Titchmarsh theory for Schr\"odinger operators with strongly singular potentials},  Int. Math. Res. Not. $\mathbf{2012}$, (2012), 1699-1747.
\bibitem{LGM} Lorentz G. G., Golitschek M., Makovoz Y \emph{Constructive approximation},  A series of comprehensive studies in Mathematics  $\mathbf{304}$, (1996).
\bibitem{Ma} Marchenko V.A., \emph{Sturm-Liouville operators and their applications}, Naoukova Dumka, Kiev, (1977).


\bibitem{Nov} Novikov R. G., \emph{New global stability estimates for the Gelfand-Calder\'on inverse problem}, Inverse Problems $\mathbf{27}$, Issue 1, (2011).
\bibitem{Pe} Petersen P., \emph{Riemannian Geometry, Third Edition }, Graduate Texts in Mathematics 171, Springer, (2016).

\bibitem{RS3} Reed M. -  Simon B., \emph{Methods of modern mathematical physics- Scattering theory}, Academic Press (1978).
\bibitem{RS4} Reed M., Simon B., \emph{Methods of modern mathematical physics- Analysis of operator, Vol 4}, Academic Press (1978).
\bibitem{Sa} Salo M., \emph{The Calder\'on problem on Riemannian manifolds, Inverse problems and applications: inside out. II}, Math. Sci. Res. Inst. Publ., $\mathbf{60}$, Cambridge Univ. Press, Cambridge, (2013), 167-247.
\bibitem{SV}  Safarov Y., Vassiliev D., \emph{The asymptotic distribution of eigenvalues of partial differential operators. Translated from the Russian manuscript by the authors}. Translations of Mathematical Monographs $\mathbf{155}$, American Mathematical Society, Providence, RI, (1997), xiv+354 pp.
\bibitem{Si1} Simon B., \emph{A new approach to inverse spectral theory, I. Fundamental formalism}, Annals of Mathematics $\mathbf{150}$, (1999), 1029-1057.
\bibitem{Ta1} Taylor M., \emph{Partial Differential Equations, I. Basic theory}, Applied Mathematical Sciences 115, Springer-Verlag New York, (2011).
\bibitem{U1} Uhlmann G., \emph{Electrical impedance tomography and Calder\'on's problem}, Inverse Problems $\mathbf{25}$, (2009), 123011, 39p.



\end{thebibliography}
\end{document}